\tikzset{ext/.style={circle, draw,inner sep=1pt},int/.style={circle,draw,fill,inner sep=1pt},nil/.style={inner sep=1pt}}
\tikzset{exte/.style={circle, draw,inner sep=3pt},inte/.style={circle,draw,fill,inner sep=3pt}}
\tikzset{diagram/.style={matrix of math nodes, row sep=3em, column sep=2.5em, text height=1.5ex, text depth=0.25ex}}
\tikzset{diagram2/.style={matrix of math nodes, row sep=0.5em, column sep=0.5em, text height=1.5ex, text depth=0.25ex}}
\newcommand\reallywidehat[1]{%
\savestack{\tmpbox}{\stretchto{%
  \scaleto{%
    \scalerel*[\widthof{\ensuremath{#1}}]{\kern-.0pt\bigwedge\kern-.0pt}%
    {\rule[-\textheight/2]{1ex}{\textheight}}
  }{\textheight}%
}{1.2ex}}%
\stackon[1pt]{#1}{\tmpbox}%
}
\newtheorem{thm}{Theorem}[section]
\newtheorem{prop}[thm]{Proposition}
\newtheorem{lem}[thm]{Lemma}
\newtheorem{coro}[thm]{Corollary}
\newtheorem{defn}[thm]{Definition}
\newtheorem{rmq}[thm]{Remark} 
\newtheorem{conj}[thm]{Conjecture}
\numberwithin{equation}{section}   
\newcommand{\rdbb}{\mathbb{R}^d}
\newcommand{\lra}{\longrightarrow}
\newcommand{\rbb}{\mathbb{R}}
\newcommand{\embthm}{\overline{\mathrm{Emb}}_c(\coprod_{i=1}^r \mathbb{R}^{m_i}, \rdbb)}
\newcommand{\emb}{\overline{\mathrm{Emb}}_c(\coprod_{i=1}^r \mathbb{R}^{m_i}, \rdbb)}
\newcommand{\embnobar}{\mathrm{Emb}_c(\coprod_{i=1}^r \mathbb{R}^{m_i}, \rdbb)}
\newcommand{\embsphere}{\mathrm{Emb}(\coprod_{i=1}^r S^{m_i}, S^d)}
\newcommand{\rmodthm}{\underset{\Omega}{\mathrm{hRmod}}}
\newcommand{\rmod}{\underset{\Omega}{\mbox{hRmod}}}
\newcommand{\qbb}{\mathbb{Q}}
\newcommand{\qms}{Q^{m_1\cdots m_r}_{s_1 \cdots s_r}}
\newcommand{\codim}{d > 2 \mathrm{max}\{m_i| \ 1 \leq i \leq r\} + 1}
\newcommand{\lie}{{{\mathcal L}ie}((\bullet))}
\newcommand{\emh}{\mathcal{E}_{H}^{m_1, \cdots, m_r; d}}
\newcommand{\empi}{\mathcal{E}_{\pi}^{m_1, \cdots, m_r; d}}
\newcommand{\mg}{\mathcal{G}}
\newcommand{\dhat}{\widehat{D}_d}
\newcommand{\phat}{\widehat{P}_d}
\newcommand{\codimlow}{d -\mathrm{max}\{m_i| \ 1 \leq i \leq r\} >2}
\newcommand{\MODL}{\mathbf{{Mod}}({\mathcal L}_\infty)}
\newcommand{\bigpibar}{\overline{\Pi}^{m_1\ldots m_r d}}
\newcommand{\biglbar}{\overline{\Lambda}^{m_1\ldots m_r d}}
\newcommand{\bigkbar}{\overline{K}^{m_1\ldots m_r d}}
\newcommand{\bigpi}{\Pi^{m_1\ldots m_r d}}
\newcommand{\bigla}{\Lambda^{m_1\ldots m_r d}}
\newcommand{\bigk}{K^{m_1\ldots m_r d}}
\newcommand{\Det}{\mathrm{Det}}
\newcommand{\MOD}{\mathbf{Mod}}
\newcommand{\MODLdet}{\mathbf{{Mod}}_{\Det}({\mathcal L}_\infty)}
\newcommand{\FCOM}{\mathrm{F}{\mathcal C}om}
\newcommand{\FCOMdet}{\mathrm{F}_\Det{\mathcal C}om}
\newcommand{\lieminus}{{{\mathcal L}ie}(\bullet-1)}
\title{ \textbf{Rational homology and homotopy of \\ high dimensional string links}}
\date{}
\author{Paul Arnaud Songhafouo Tsopm\'en\'e\\
Victor Turchin\thanks{The second author is partially supported by the Simons Foundation \lq\lq{}Collaboration grant for mathematicians\rq\rq{}, award ID:~519474.}}
\begin{document}
\maketitle

\begin{abstract}  
Arone and the second author showed that when the dimensions are in the stable range, the rational homology and homotopy of the high dimensional anologues of spaces of long knots can be  calculated as the homology of a direct sum of finite graph-complexes that they described explicitly.  They also showed that these homology and homotopy groups can be interpreted as the 
higher order Hochschild homology also called Hochschild-Pirashvili homology. 
 In this paper, we generalize all these results to high dimensional analogues of spaces of string links. 

The methods of our paper are applicable in the range when the ambient dimension is at least twice the maximal dimension of a 
link component plus two, which in particular guarantees that the spaces under the study are connected. However, we conjecture  that our homotopy graph-complex computes the rational homotopy groups of links spaces always when codimension is greater than two, i.e. always when the Goodwillie-Weiss calculus is applicable. Using Haefliger\rq{}s approach to calculate the groups of isotopy classes of higher dimensional links,
we confirm our cojecture at the level of $\pi_0$.
\end{abstract}


\setcounter{section}{-1}

\section{Introduction}\label{s:intro}

In this paper we further develop the connection between the study of embedding spaces, operad theory, and graph-complexes. Our results are partially influenced by Bar Natan\rq{}s work~\cite{barnatan95}, where he describes  the space dual to weight systems (counting Vassiliev invariants) of string links in $\rbb^3$ as a space of unitrivalent graphs modulo $AS$ and $IHX$ relations, with univalent vertices being colored by the components of the links, see also~\cite[Section~5.10]{chm_duzh_most12} for more details. This space is built from the components of the modular envelop $\mathbf{Mod}({{\mathcal L}ie})$ of  the operad of Lie algebras~\cite{hin_vaintrob02}. One of our main results says that the rational homotopy of the space of higher-dimensional string links is described as the homology of a graph-complex similarly built from the components of the modular envelop of ${\mathcal L}_\infty$. 
  In particular  Bar Natan\rq{}s space of unitrivalent graphs with colored univalent vertices appears as the bottom degree homology of our complexes. The approach that we use is from our previous works~\cite{aro_tur12,aro_tur13,songhaf13}. It is applicable only when codimension is high enough to guarantee that the spaces in question are connected. However, we conjecture that the same result holds requiring only  codimension  $>2$. To justify this conjecture we express the rational Haefliger groups of isotopy classes of 
  higher-dimensional links~\cite{haefliger}  as spaces of unitravalent trees with leaves colored by the components of the links
  (similarly as Bar Natan\rq{}s space). 
   We show that this is exactly the degree zero homology  of the  corresponding homotopy
  graph-complexes. In particular unitrivalent graphs of loop order $\geq 1$  are also present in the homology of these graph-complexes but they appear in positive  degrees.
  
    In this paper we  produce two different graph-complexes computing the rational homotopy of spaces of string links. One of them is built from the components of $\mathbf{Mod}({\mathcal L}_\infty)$ and another one has building blocks desrcibed in terms of the homology of configuration spaces
of points in Euclidean spaces. Both complexes resemble those appearing in the Bott-Taubes type integration, 
see for example~\cite{CCL,sakai10,sakai_watanabe,volic}. The obtained complexes all split into a direct sum of finite complexes. In the follow up paper~\cite{songhaf_tur}, we compute the generating function of the Euler characteristics of the summands of these complexes 
(both for the homology and homotopy). As a byproduct result in~\cite{songhaf_tur}, we compute the supercharacter of the symmetric group action on the positive arity components of  $\mathbf{Mod}({\mathcal L}_\infty)$. 

Let $d$ be the dimension of the ambient space. Let $r \geq 1$, and let $m_1, \cdots, m_r \geq 1$ be $r$ integers. A \textit{high-dimensional string link of $r$ strands} is a smooth embedding $f \colon \coprod_{i=1}^r \rbb^{m_i} \hookrightarrow \rdbb$  that coincides outside a compact set with a fixed  embedding  $\iota\colon \coprod_{i=1}^r \rbb^{m_i} \hookrightarrow \rdbb$ affine on each component. When $r$ is understood we will just say \textit{high-dimensional string link}. The space of high-dimensional string links, denoted $\mbox{Emb}_c(\coprod_{i=1}^r \mathbb{R}^{m_i}, \rdbb)$, is the collection of all high-dimensional string links endowed with the weak $\mathcal{C}^{\infty}$-topology. Similarly, we define the space $\mbox{Imm}_c(\coprod_{i=1}^r \mathbb{R}^{m_i}, \rdbb)$ of smooth immersions $\coprod_{i=1}^r \rbb^{m_i} \looparrowright \rdbb$ that coincide outside a compact set with~$\iota$. By definition, there is an inclusion $\mbox{Emb}_c(\coprod_{i=1}^r \mathbb{R}^{m_i}, \rdbb) \hookrightarrow \mbox{Imm}_c(\coprod_{i=1}^r \mathbb{R}^{m_i}, \rdbb)$, and its homotopy fiber over $\iota$, denoted $\emb$, is the space under our study in this paper. The case $r=1$ was extensively studied in \cite{aro_tur12} and \cite{aro_tur13} by Arone and the second author. They  obtain the Hodge decomposition in the rational homology $H_*(\overline{\mbox{Emb}}_c(\rbb^m, \rdbb), \qbb)$ and in the rational homotopy $\qbb \otimes \pi_*\overline{\mbox{Emb}}_c(\rbb^m, \rdbb)$, $d> 2m+1$. Then they explicitly describe graph-complexes computing $H_*(\overline{\mbox{Emb}}_c(\rbb^m, \rdbb), \qbb)$ and $\qbb \otimes \pi_*\overline{\mbox{Emb}}_c(\rbb^m, \rdbb)$.

Because of the high codimension, the space $\emb$ is connected and moreover its homotopy type does not depend on the choice of the fixed affine embedding $\iota$. In particular we can choose $\iota$ in a way that  all connected components of the fixed unlink are parallel to a given line. Concatenation of links along the direction of this line defines an $H$-space structure on $\emb$, which implies that the rational homology of $\emb$ is a cofree cocommutative coalgebra cogenerated by the rational homotopy. The same is true for the graph-complexes that we  study.

In order to describe our graph-complexes, we need Theorem~\ref{rational_homology_thm} below, which is a generalization of \cite[Theorem 1.3]{aro_tur12}. Before stating Theorem~\ref{rational_homology_thm}, we will recall two notions. The first one is that of right $\Omega$-modules. Let $\Omega$ be the category of finite unpointed  sets whose morphisms are surjections. A \textit{right $\Omega$-module} is a contravariant functor from $\Omega$ to any given category. We denote by $\underset{\Omega}{\mbox{Rmod}}$ the category of right $\Omega$-modules   of  chain complexes over $\qbb$. Abusing notation let $i\in\Omega$ denote the set $\{1,\ldots,i\}$. The category of right $\Omega$-modules of $\qbb$-vector spaces is an abelian category with the set of projective generators $\qbb[\mathrm{hom}_\Omega(i,-)]$, $i\geq 0$. 
 Using this
fact we endow $\underset{\Omega}{\mbox{Rmod}}$ with the structure of a (cofibrantly generated) model category, whose weak equivalences are quasi-isomorphisms, and fibrations are degreewise surjective maps~\cite{hovey}. (We call it projective model 
structure.) We denote by 
$\underset{\Omega}{\mbox{Rmod}}(-,-)$ the space (chain complex) of maps between two $\Omega$-modules, and by 
$\rmod(-,-)$ the derived mapping space. For specific computations we will need to apply this construction only to $\Omega$-modules with zero differential, in which case $\rmod(-,-)$  can be expressed as a product of $\mathrm{Ext}$ groups. 

As an example of a right $\Omega$-module (the one that appears in Theorem~\ref{rational_homology_thm}), we have the following. Let $X$ be a pointed topological space. Define the functor $X^{\wedge \bullet}$ that assigns to the set $\{1,2,\ldots,k\}$ the space  $ X^{\wedge k} = \underbrace{X \wedge \cdots \wedge X}_k$ (here "$\wedge$" is the smash product operation).  By $X^{\wedge 0}$ we mean the two-point  space -- zero dimensional pointed sphere $S^0$. 
The morphisms are induced by the diagonal maps. This functor is a right $\Omega$-module with values in topological spaces. Thus the functors  $\widetilde{C}_*(X^{\wedge \bullet}) $ (here $\widetilde{C}_*(-)$ denotes the reduced chain complex functor), $\widetilde{H}_*(X^{\wedge \bullet})$ (with zero differential) are objects of $\underset{\Omega}{\mbox{Rmod}}$. 

  The second notion we need to recall is that of right $\Gamma$-modules, which is the same as the previous one with the category $\Omega$ replaced by the category $\Gamma$. Recall that objects in $\Gamma$ are finite pointed sets, while morphisms are pointed maps.  As before we have the category $\underset{\Gamma}{\mbox{Rmod}}$ (of right $\Gamma$-modules
in chain complexes). We similarly denote by $\underset{\Gamma}{\mbox{Rmod}}(-,-)$ the mapping space and by $\underset{\Gamma}{\mbox{hRmod}}(-,-)$ the derived mapping space between two right $\Gamma$-modules. The example of a right $\Gamma$-module we look at is $H_*(C (\bullet, \rdbb), \qbb)$, $d\geq 2$, where $C(k, \rdbb)$ denotes the configuration space of $k$ labeled points in $\rdbb$. A way to see that $H_*(C(\bullet, \rdbb), \qbb)$ is a right $\Gamma$-module is the fact that it is an infinitesimal bimodule (see \cite[Definition 3.8]{aro_tur12} or \cite[Definition 4.1]{turchin10} for the definition of an \textit{infinitesimal bimodule}) over ${{\mathcal C}om} = H_0(C(\bullet, \rdbb), \qbb)$ since there is an obvious morphism of operads ${{\mathcal C}om} \lra H_*(C(\bullet, \rdbb), \qbb)$, the target $ H_*(C(\bullet, \rdbb), \qbb) =  H_*({\mathcal B}_d(\bullet), \qbb)$ is the homology of the little discs operad ${\mathcal B}_d(\bullet)$, which is well known to be the graded Poisson algebras operad~\cite{Coh}. It is also well known that an infinitesimal bimodule over Com is the same thing as a right $\Gamma$-module (see \cite[Corollary~4.10]{aro_tur12}
or \cite[Lemma~4.3]{turchin10}). One can also show that the sequence  $\qbb \otimes \pi_*C(\bullet,\rdbb)$, $d\geq 3$,  has a natural structure of a right $\Gamma$-module. 

  In \cite{pira00} Pirashvili constructs a functor (called \textit{cross effect}) $\mbox{cr} \colon \underset{\Gamma}{\mbox{Rmod}} \lra \underset{\Omega}{\mbox{Rmod}}$ that turns out to be an equivalence of categories. 
    In Section~\ref{rational_homology_string_link_section} we recall its definition.  This functor allows to pass from $\Gamma$-modules to $\Omega$-modules, which reduces computations as $\Omega$ has smaller sets of morphisms. 
    Let $\widehat{H}_*(C(\bullet, \rdbb), \qbb)$ (respectively $\qbb \otimes \widehat{\pi}_*C(\bullet, \rdbb)$) denote the cross effect of $H_*(C(\bullet, \rdbb), \qbb)$ (respectively the cross effect of $\qbb \otimes \pi_*C(\bullet, \rdbb)$).  

We are now ready to state our first result, which concerns the rational homology of $\emb$. 

\begin{thm} \label{rational_homology_thm}

For $d > 2 \mathrm{max}\{m_i| \ 1 \leq i \leq r\} + 1$, there is an isomorphism

\begin{eqnarray} \label{rational_homology_iso}
H_*(\embthm, \qbb) \cong H\left(\rmodthm\left(\widetilde{H}_*((\vee_{i=1}^r S^{m_i})^{\wedge \bullet}, \qbb), \widehat{H}_*(C(\bullet, \rdbb), \qbb)\right)\right).
\end{eqnarray}

\end{thm}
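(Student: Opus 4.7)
The plan is to follow the strategy of \cite{aro_tur12} for $r=1$, adapted to accommodate multiple strands. The proof proceeds in several steps. First, I would invoke Goodwillie--Weiss manifold calculus to express $\emb$ as the homotopy limit of its Taylor tower. Convergence of this tower needs codimension at least three on every strand, so the hypothesis $\codim$ is more than enough; it additionally guarantees that $\emb$ is simply connected, so the rational homology of the tower limit agrees with that of the space itself.

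The central step is to identify the tower with a derived mapping space of $\Omega$-modules. Using the configuration category (equivalently, infinitesimal bimodule) model of the Taylor tower, and the fact that $\emb$ is the homotopy fiber over the immersion space (so the Smale--Hirsch piece is cancelled), I would first obtain a description in terms of derived right $\Gamma$-module maps with target $\widetilde{C}_*(C(\bullet,\rdbb),\qbb)$. Passing through Pirashvili's cross-effect equivalence $\underset{\Gamma}{\mathrm{Rmod}}\simeq\underset{\Omega}{\mathrm{Rmod}}$, the ``source'' module becomes $\widetilde{C}_*((\vee_{i=1}^r S^{m_i})^{\wedge \bullet},\qbb)$: the smash power packages the datum of $k$ points each carrying a color $i\in\{1,\ldots,r\}$ recording which strand it lies on, together with a position in the corresponding $\rbb^{m_i}$, and after cross-effect this is precisely what the smash powers of the wedge encode. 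This wedge-of-spheres source is the multi-strand counterpart of the single sphere $S^m$ appearing in the $r=1$ case of \cite{aro_tur12}.

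Finally, I would rationalize using Kontsevich--Tamarkin formality of the little discs operad ${\mathcal B}_d$. Over $\qbb$, this promotes to a quasi-isomorphism of right $\Gamma$-modules (equivalently, of right $\Omega$-modules after cross-effect) between $\widetilde{C}_*(C(\bullet,\rdbb),\qbb)$ and $\widetilde{H}_*(C(\bullet,\rdbb),\qbb)$. Substituting this into the derived mapping space above and taking homology yields the isomorphism \eqref{rational_homology_iso}.

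The main obstacle is the identification in the second step. For a single strand it is carried out in \cite{aro_tur12}; the new work is to track how the configuration category and its associated modules behave when the source manifold is $\coprod_{i=1}^r \rbb^{m_i}$ rather than a single $\rbb^m$. Concretely, one must check that assembling the ``one-disc'' data from each strand into a single right $\Gamma$-module produces, after cross-effect, exactly the smash powers of $\vee_{i=1}^r S^{m_i}$. This is where the strand-coloring combinatorics genuinely enter, and where the wedge of spheres (as opposed to a product or disjoint union) arises as the correct receptacle for the multilinear pieces across all strands.
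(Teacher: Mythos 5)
Your outline captures the right global shape (Goodwillie--Weiss tower $\to$ derived $\Gamma$-module maps $\to$ cross effect $\to$ rationalize), but it is missing two genuine ingredients that the paper supplies, and the part you flag as the ``main obstacle'' is indeed the one the paper must solve by a specific geometric device that you do not describe.

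First, the difficulty with multiple strands of \emph{different} dimensions $m_1,\ldots,m_r$ is not merely a matter of ``tracking the configuration category'': the Arone--Turchin model (which the paper imports as \cite[Proposition~2.11]{songhaf13}) is stated for an open subset $N\subseteq\rbb^n$ of a \emph{single} Euclidean space, and embedding calculus as used there takes as input open subsets of one fixed manifold. The paper's resolution (Propositions~\ref{crucial_prop} and~\ref{p:equiv_towers}) replaces each $\rbb^{m_i}$ by a tubular neighborhood $M_i$ sitting inside a fixed $\rbb^m$ (and further by an $(m{+}1)$-manifold $M\subset\rbb^{m+1}$); the equivalence $\ebardef_c(M,\rdbb)\simeq\emb$ follows because the partial framings cancel on passing to the homotopy fiber over immersions, and the identification of the one-point compactification $\widehat{M}\simeq \vee_{i=1}^r S^{m_i}$ is what makes the wedge of spheres appear as the source $\Gamma$-module. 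Without this reduction there is no clean way to feed a disjoint union of manifolds of varying dimension into \eqref{codim_condition}-type statements, so your ``new work is to track\dots'' remark points at the right gap but does not close it.

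Second, your formality step is off in two ways. What is needed on the target side is the \emph{relative} formality of ${\mathcal B}_n\hookrightarrow{\mathcal B}_d$ (Lambrechts--Voli\'c for $d\ge 2n+1$, Turchin--Willwacher for $d\ge n+2$), not the absolute Kontsevich--Tamarkin formality of ${\mathcal B}_d$: the right $\Gamma$-module structure on chains of configuration spaces comes from the infinitesimal ${\mathcal B}_n$-bimodule structure, and absolute formality of ${\mathcal B}_d$ alone does not respect it. In the paper this is already packaged into \eqref{codim_condition}. Moreover, you omit entirely the formality of the \emph{source} $\Omega$-module $\widetilde{C}_*((\vee_{i=1}^r S^{m_i})^{\wedge\bullet},\qbb)$, which is Lemma~\ref{formality_lem} and is the step that lets $\widetilde{C}_*$ be replaced by $\widetilde{H}_*$ on the left of the mapping space. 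This is a nontrivial fact (it is reduced to the single-sphere case of \cite{aro_tur12} via a wedge decomposition argument), and without it your chain-level statement does not simplify to the claimed homology-level one. Finally, a smaller point: the convergence you invoke (codimension $\ge 3$, simple connectivity) is the space-level convergence; what the argument actually uses is Weiss's convergence of the Taylor tower \emph{for singular chains}, which requires $d>2m+1$ and is a separate statement from $T_\infty\emb\simeq\emb$.
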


\sloppy
%

Our second result concerns the rational homotopy of $\emb$, and is the following. 

\begin{thm} \label{rational_homotopy_thm}
For $d > 2 \mathrm{max}\{m_i| \ 1 \leq i \leq r\} + 1$, there is an isomorphism

\begin{eqnarray} \label{rational_homotopy_iso}
\qbb \otimes \pi_*\embthm \cong H\left(\rmodthm\left(\widetilde{H}_*((\vee_{i=1}^r S^{m_i})^{\wedge \bullet}, \qbb), \qbb \otimes \widehat{\pi}_*C(\bullet, \rdbb)\right)\right).
\end{eqnarray}

\end{thm}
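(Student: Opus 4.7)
The plan is to deduce Theorem~\ref{rational_homotopy_thm} from Theorem~\ref{rational_homology_thm} by passing to primitives of matching cocommutative Hopf-algebra structures on both sides of \eqref{rational_homology_iso}. Concatenation of string links along the distinguished line gives $\emb$ the structure of a connected H-space, so $H_*(\emb;\qbb)$ is a graded-commutative, graded-cocommutative Hopf algebra. By the Milnor-Moore and Cartan-Serre theorems it is the free graded-commutative algebra on its module of primitives, and these primitives coincide with $\qbb\otimes\pi_*\emb$. Thus \eqref{rational_homotopy_iso} will follow once the right-hand side of \eqref{rational_homology_iso} is equipped with a compatible cocommutative coalgebra structure whose primitives match the right-hand side of \eqref{rational_homotopy_iso}.

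To install the coalgebra structure on the graph-complex side, I would use the classical Cohen description: for each $k$, $H_*(C(k,\rdbb),\qbb)$ is the universal enveloping algebra of the free shifted Lie algebra on $\qbb\otimes\pi_*C(k,\rdbb)$, hence a primitively generated cocommutative Hopf algebra, naturally in $\Gamma$. Because the Pirashvili cross-effect equivalence $\mathrm{cr}\colon\underset{\Gamma}{\mbox{Rmod}}\to\underset{\Omega}{\mbox{Rmod}}$ is (lax) symmetric monoidal, it transports this Hopf structure to $\widehat{H}_*(C(\bullet,\rdbb),\qbb)$ with primitives $\qbb\otimes\widehat{\pi}_*C(\bullet,\rdbb)$. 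The derived mapping complex $\rmodthm(A,-)$ with $A=\widetilde{H}_*((\vee_{i=1}^r S^{m_i})^{\wedge\bullet},\qbb)$ is then lax symmetric monoidal in its target, so it inherits a cocommutative coproduct from $\widehat{H}_*(C(\bullet,\rdbb),\qbb)$, and a symmetric-algebra-decomposition computation identifies its primitives with $\rmodthm(A,\qbb\otimes\widehat{\pi}_*C(\bullet,\rdbb))$.

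The step I expect to be the main obstacle is verifying that the isomorphism of Theorem~\ref{rational_homology_thm} is itself a map of Hopf algebras, so that passing to primitives on both sides yields \eqref{rational_homotopy_iso}. This demands tracing the concatenation product on $\emb$ through the Goodwillie-Weiss tower, the identification of its layers with infinitesimal bimodule mapping spaces over the little discs operad, and the formality quasi-isomorphism, and matching it term by term with the coproduct constructed above. An alternative route I would pursue in parallel, which sidesteps the Hopf-compatibility check, is to repeat the argument of Theorem~\ref{rational_homology_thm} directly with $\widetilde{H}_*$ replaced by $\qbb\otimes\widetilde{\pi}_*$ throughout, using that $\{C(k,\rdbb)\}$ is rationally formal both in homology and in homotopy groups and that Quillen-model functors commute with the relevant homotopy limits in the Goodwillie-Weiss tower in the stable range $d>2\mathrm{max}\{m_i\}+1$ where the tower converges and the link space is simply connected.
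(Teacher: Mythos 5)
Your high-level strategy — exploit the concatenation H-space structure, identify $\qbb\otimes\pi_*\emb$ as the primitives of the Hopf algebra $H_*(\emb;\qbb)$, and match primitives on the mapping-complex side — is the same as the paper's. But the step you single out as ``the main obstacle,'' verifying that the isomorphism of Theorem~\ref{rational_homology_thm} is a Hopf-algebra map, is actually unnecessary, and the way the paper avoids it is the key simplification you are missing. The argument is by dimension count: once one knows that (i) $H_*(\emb;\qbb)$ is the free graded-cocommutative coalgebra cogenerated by $\qbb\otimes\pi_*\emb$ and (ii) the complex computing the right-hand side of~\eqref{rational_homology_iso} is, at the chain level, the free cocommutative coalgebra cogenerated by the complex computing the right-hand side of~\eqref{rational_homotopy_iso}, a graded-vector-space isomorphism in~\eqref{rational_homology_iso} already forces the graded dimensions of the cogenerators to agree, hence forces~\eqref{rational_homotopy_iso} (which only asserts a $\qbb$-linear isomorphism). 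No Hopf compatibility of the map is needed.

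Item~(ii) is also where your plan diverges from the paper's and where the real technical content lives. Rather than attempting an abstract lax symmetric monoidal argument to install a coproduct directly on $\rmodthm(A,-)$ (which is delicate: one would need $A$ to be a comonoid for a suitable monoidal structure on $\Omega$-modules and then a Künneth-type map pointing the right way, none of which is immediate), the paper builds explicit injective resolutions $\dhat^{\bullet}$ of $\widehat{H}_*({\mathcal B}_d(\bullet),\qbb)$ and $\phat^{\bullet}\subset\dhat^{\bullet}$ of $\qbb\otimes\widehat{\pi}_*{\mathcal B}_d(\bullet)$, and identifies the resulting derived mapping complexes with the hairy graph-complexes $\emh$ and $\empi$ (Theorems~\ref{emh_hrmod_thm} and~\ref{empi_hrmod_thm}). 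The coproduct on $\emh$ and the identification of its primitives with $\empi$ are then visible by hand: $\emh$ is spanned by possibly-disconnected colored hairy graphs, $\empi$ by the connected ones. Your alternative route of ``repeating the Theorem~\ref{rational_homology_thm} argument with $\widetilde{H}_*$ replaced by $\qbb\otimes\pi_*$'' does not work as a formal substitution: the Goodwillie--Weiss input to Theorem~\ref{rational_homology_thm} is a statement about singular chains, and there is no analogous tower whose layers compute homotopy groups in a way that feeds directly into the $\Omega$-module derived mapping space; the passage to homotopy has to go through the Hopf-algebra/primitives structure as above.
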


The main ingredients in the proof of Theorem~\ref{rational_homotopy_thm} are Theorem~\ref{rational_homology_thm} and hairy graph-complexes that  will be explicitly described in Subsection~\ref{graphs_complexes_emh-empi}. 

Below we produce a decomposition in the homology $H_*(\emb, \qbb)$. Our decomposition can be viewed as a generalization of a result of Pirashvili \cite{pirash00}, which concerns the Hodge decomposition for the $S^m$-homology of any right $\Gamma$-module (a.k.a. Hodge decomposition for the higher order Hochschild homology). We say "generalization" because, as in \cite{songhaf13} or as we will see in Section~\ref{rational_homology_string_link_section}, it is not difficult   to show that the homology $H_*(\emb, \qbb)$ is the $\vee_{i=1}^r S^{m_i}$-homology of the $\Gamma$-module $H_*(C(\bullet, \qbb))$.  The Hodge-type splitting that we study can be in the same way defined for the $\vee_{i=1}^r S^{m_i}$ higher Hochschild homology of any $\Gamma$-module and appears because ${C}_*((\vee_{i=1}^r S^{m_i})^{\times \bullet}, \qbb)$ is formal as a right $\Gamma$-module (which itself is equivalent to the fact that its cross effect $\Omega$-module $\widetilde{C}_*((\vee_{i=1}^r S^{m_i})^{\wedge \bullet}, \qbb)$ is formal). More generally such splitting in higher Hochschild 
homology takes place for any suspension and has been recently studied in~\cite{tur_wilw2}. 

The right hand sides of (\ref{rational_homology_iso}) and (\ref{rational_homotopy_iso}) can be split as follows. For any homomorphism of finite groups $H \to G$, let $\mbox{Ind}^G_H$ denote the induction functor from $H$-representations to $G$-representations, which is left adjoint to the restriction functor.\footnote{Explicitly, since we deal with the right action: $\mathrm{Ind}^G_H V=V\otimes_{\qbb[H]}\qbb[G].$} In most of the cases $G$ will be a symmetric group and $H$ be its subgroup.  Let us consider the right $\Omega$-module $\widetilde{H}_*((\vee_{i=1}^r S^{m_i})^{\wedge \bullet}, \qbb)$ that appears in (\ref{rational_homology_iso}). By definition, its $k$th component is given by 
\begin{eqnarray} \label{homology_wedge}
  \widetilde{H}_*((\vee_{i=1}^r S^{m_i})^{\wedge k}, \qbb)   \cong _{\Sigma_k}
  \bigoplus_{s_1+\cdots+s_r =k} \mbox{Ind}^{\Sigma_k}_{\Sigma_{s_1} \times \cdots \times \Sigma_{s_r}} 
    \widetilde{H}_*\left( S^{s_1m_1 + \cdots+s_rm_r}, \qbb    \right). 
\end{eqnarray}
One has the following isomorphism of right $\Omega$-modules

\begin{eqnarray} \label{splitting_htilde}
\widetilde{H}_*((\vee_{i=1}^r S^{m_i})^{\wedge \bullet}, \qbb) \cong \underset{s_1, \cdots, s_r \geq 0}{\bigoplus} Q^{m_1 \cdots m_r}_{s_1 \cdots s_r}, 
\end{eqnarray}

where $Q^{m_1 \cdots m_r}_{s_1 \cdots s_r}$ is the right $\Omega$-module defined by 

   \begin{eqnarray} \label{qs1sr}
Q_{s_1 \cdots s_r}^{m_1 \cdots m_r}(k) = \left\{ \begin{array}{lll}
                                 0 & \mbox{if} & k \neq s_1 + \cdots +s_r; \\
																\mbox{Ind}^{\Sigma_k}_{\Sigma_{s_1} \times \cdots \times \Sigma_{s_r}} \widetilde{H}_*(S^{s_1m_1+\cdots+s_rm_r} ; \mathbb{Q}) & \mbox{if} & k = s_1+\cdots+s_r.
                                \end{array} \right.
   \end{eqnarray}
Any non-bijective surjection acts on $Q^{m_1 \cdots m_r}_{s_1 \cdots s_r}$ automatically as zero. 
		
On the other hand, we have the splittings of $\Omega$-modules
\begin{eqnarray} \label{splitting_hhat_H}
\widehat{H}_*(C(\bullet, \rdbb), \qbb) = \underset{t \geq 0}{\prod} \widehat{H}_{t(d-1)}(C(\bullet, \rdbb), \qbb). 
\end{eqnarray}

\begin{eqnarray} \label{splitting_hhat_Pi}
\qbb \otimes \widehat{\pi}_*C(\bullet, \rdbb) = \underset{t \geq 0}{\prod} \qbb \otimes \widehat{\pi}_{t(d-2)+1}C(\bullet, \rdbb). 
\end{eqnarray}

Combining (\ref{rational_homology_iso}), (\ref{rational_homotopy_iso}), (\ref{splitting_htilde}), (\ref{splitting_hhat_H}), and (\ref{splitting_hhat_Pi}), it is not difficult to get the following splittings 

\begin{multline} \label{final_splitting_H}
H_*(\emb, \qbb) \cong \underset{s_1, \cdots, s_r, t}{\prod} H_*\rmod \left(Q^{m_1 \cdots m_r}_{s_1 \cdots s_r},  \widehat{H}_{t(d-1)}(C(\bullet, \rdbb), \qbb) \right)\\
\cong \underset{s_1, \cdots, s_r, t}{\bigoplus} H_*\rmod \left(Q^{m_1 \cdots m_r}_{s_1 \cdots s_r},  \widehat{H}_{t(d-1)}(C(\bullet, \rdbb), \qbb) \right).
\end{multline}

\begin{multline} \label{final_splitting_pi}
\qbb \otimes \pi_*(\emb) \cong \underset{s_1, \cdots, s_r, t}{\prod} H_*\rmod \left(Q^{m_1 \cdots m_r}_{s_1 \cdots s_r},  \qbb \otimes \widehat{\pi}_{t(d-2)+1}C(\bullet, \rdbb) \right)\\
\cong \underset{s_1, \cdots, s_r, t}{\bigoplus} H_*\rmod \left(Q^{m_1 \cdots m_r}_{s_1 \cdots s_r},  \qbb \otimes \widehat{\pi}_{t(d-2)+1}C(\bullet, \rdbb) \right).
\end{multline}

The product is replaced by the direct sum because only finitely many factors contribute for any given degree. We use here that $d > 2 \mathrm{max}\{m_i| \ 1 \leq i \leq r\} + 1$. This can be seen from the explicit description of our complexes analogously to the case $r=1$, see  Remark~\ref{rmq:finite_dim}. (For~\eqref{final_splitting_pi}, this is true even for a weaker constraint 
$d> \mathrm{max}\{m_i| \ 1 \leq i \leq r\} + 2$.) 
  The splitting by $s_1,\ldots,s_r$ will be called Hodge splitting and the splitting by $t$ will be called splitting by complexity. 

The current paper is the first one of our project. In the second one~\cite{songhaf_tur}, we determine the generating functions
of the Euler characteristics of the summands in~\eqref{final_splitting_H} and~\eqref{final_splitting_pi}. By the time
when our project was completed, another work by Fresse, Willwacher and the second author appeared~\cite{fr_tur_wilw},
where among other things, by very different techniques, the statement of Theorem~\ref{rational_homotopy_thm},
equivalently~\eqref{final_splitting_pi},  was improved for $r=1$ to codimension $d-m_1>2$.\footnote{More precisely they proved that the rational
homotopy groups of the little discs operads mapping space $\mathrm{hOper}({\mathcal B}_m,{\mathcal B}_d)$, $d-m>2$, are computed
by the (shifted) hairy graph-complex ${\mathcal E}_\pi^{m,d}$  that describes explicitly the right-hand side of~\eqref{final_splitting_pi}  (see Section~\ref{graph_complexes}). (In~\cite{fr_tur_wilw} this complex is denoted by $\mathrm{HGC}_{m,d}$.) The latter result implies ours
thanks to the delooping result $\overline{\mathrm{Emb}}_c(\rbb^m,\rbb^d)\simeq \Omega^{m+1} \mathrm{hOper}({\mathcal B}_m,{\mathcal B}_d)$ from~\cite{boav_weiss}, see also~\cite{duc_tur,dwyer_hess,dwyer_hess2,turchin14}.}  Using the classical study of the isotopy classes of higher dimensional spherical links due to A.~Haefliger and others~\cite{haefliger,cr_fer_skop},
we confirmed, see Theorem~\ref{t:low}, that~\eqref{final_splitting_pi}  holds at the level of $\pi_0$  always when codimension $>2$.  Based on these facts, we conjecture that this identity~\eqref{final_splitting_pi} is true for $*\geq 0$ 
and $\codimlow$, see Conjecture~\ref{con:low}.

To conclude the introduction we mention that the approach that we use is very powerful in the sense that it allows one to determine
all the rational homology and homotopy groups for links spaces in the stable range.  
 However,
the weak side of this approach, it is completely abstracted from the geometry of embedding spaces.  It is an interesting problem 
to find a geometrical interpretation of the graph-complexes produced by this approach. Remarkably, similar graph-complexes
appear in the perturbative approach (also called Bott-Taubes integration). This approach is well established for one-dimensional knots and links~\cite{CCL,volic}, in the sense that it produces a well defined map from a graph-complex computing the cohomology 
of the link space to its de Rham complex. However, it is still an open  conjecture whether this map is a 
quasi-isomorphim.\footnote{An attempt to construct geometrical cycles that pair non-trivially with the configuration integral graph-cocycles has been done in~\cite{longoni,pelatt_sinha}.} In the higher-dimensional case, this approach has  only partial results and only  for  knots (one component links): one was only able to produce a well-defined map for the loop order $\leq 1$ part
of a complex quasi-isomorphic to ours~\cite{sakai10,sakai_watanabe}.  It is also quite interesting that in those works, one only requires the codimension
to be $> 2$, which is another sign in support of our  Conjecture~\ref{con:low}.

\vspace{5mm}

\textbf{Outline of the paper.}

\begin{enumerate}

\item[-] In Section~\ref{rational_homology_string_link_section} we prove Theorem~\ref{rational_homology_thm}, which determines the rational homology of the space of high dimensional string links. The proof is based on a version, developed by Arone and the second author in \cite{aro_tur12}, of the Goodwillie-Weiss embedding calculus \cite{good_weiss99, weiss99,weiss04}. It uses the relative formality theorem (of the inlcusion ${\mathcal B}_n \hookrightarrow {\mathcal B}_d$ of the little $n$-disks operad inside the little $d$-disks operad) proved for $d\geq 2n+1$ in \cite{lam_vol} and improved to the range $d\geq n+2$ in~\cite{tur_wilw}. It also uses the fact that the right $\Omega$-module $\widetilde{C}_*((\vee_{i=1}^r S^{m_i})^{\wedge \bullet}, \qbb)$ is formal in the category $\underset{\Omega}{\mbox{Rmod}}$. 

\item[-]  In Section~\ref{graph_complexes} we prove Theorem~\ref{rational_homotopy_thm}, which determines the rational homotopy of $\emb$. The plan of the proof is the same as that of \cite[Theorem 1.1 (ii)]{aro_tur13}, for which our theorem is a generalization. 
In order to prove \cite[Theorem 1.1 (ii)]{aro_tur13}, the authors, Arone and the second author, explicitly describe \textit{hairy graph-complexes} computing the rational homology and the rational homotopy of the space $\overline{\mbox{Emb}}_c(\rbb^m, \rbb^d)$. Our complexes, described in Section~\ref{graphs_complexes_emh-empi}, are the same as theirs except that the external vertices of our graphs are colored by the components of the links.\footnote{Similar graph-complexes were studied in~\cite{CKV}.} They are obtained by taking the injective resolution of the
target $\Omega$-modules in~\eqref{final_splitting_H} and~\eqref{final_splitting_pi}. Theorem~\ref{rational_homotopy_thm}
follows from the fact that the homotopy hairy graph-complex  turns out to be exactly the subcomplex of connected
graphs of the homology hairy graph-complex. At the end of this section we explain how our graph-complexes are related
to the modular envelop of the ${\mathcal L}_\infty$ operad.

  \item[-]  Section~\ref{s:low} is devoted to  Conjecture~\ref{con:low} saying that our
hairy graph-complex computes the rational homotopy of $\emb$ for any codimension $> 2$. We show Theorem~\ref{t:low} that the statement of the
conjecture is true at the level of $\pi_0$. From the point of view of the hairy graph-complex only its tree part can contribute
to degree zero. Another way to say it  is that the rational additive invariants of higher dimensional 
string links can all be expressed as  generalized Milnor-Koschorke type invariants~\cite{casson,CohKS,habeg_masb00,kosch,munson11}.\footnote{We also believe that  such invariants can be 
 expressed
as Bott-Taubes type integrals similarly to the Haefliger invariant~\cite{sakai10}.}

\item[-] In Section~\ref{generating_function_homology} we  explicitly describe  graph-complexes computing the rational 
(co)homology and homotopy of $\emb$, which are obtained by taking projective resolution of the source $\Omega$-modules
in~\eqref{final_splitting_H} and~\eqref{final_splitting_pi}. We use here the fact that a right $\Omega$-module is the same thing as a right module over the
commutative non-unital operad ${{\mathcal C}om}_+$ and also the Koszul duality between ${{\mathcal C}om}_+$ and the operad 
${{\mathcal L}ie}$ of Lie algebras. The obtained complexes are thus called \textit{Koszul complexes}. We use them   in the follow up paper~\cite{songhaf_tur} for the computations of the Euler characteristics of the summands in~\eqref{final_splitting_H}
and~\eqref{final_splitting_pi}. We also show that natural truncations of the Koszul complexes can be used to determine
the stages  of the Goodwillies-Weiss towers $T_\bullet C_* (\emb,\qbb)$ and (conjecturally) $\qbb\otimes \pi_*T_\bullet\emb$.


\end{enumerate}

\textbf{Acknowledgements} 
This work has been supported by Fonds de la Recherche Scientifique-FNRS (F.R.S.-FNRS), that the authors acknowledge. It has been also supported by the Kansas State University (KSU), where this paper was partially written during the stay of the first author, and which he thanks for hospitality. 
The second author thanks S.~Melikhov, A.~Skopenkov, and M.~Skopenkov for communication.
The second author is currently supported by the Simons collaboration grant.

\section{The rational homology of $\emb$} \label{rational_homology_string_link_section}

The goal of this section is to prove Theorem~\ref{rational_homology_thm} announced in the introduction. 

The first and main step in proving Theorem~\ref{rational_homology_thm} is  Theorem~\ref{rational_homology_thm2}.
  Propositions~\ref{crucial_prop} 
and~\ref{p:equiv_towers}
below will be used in the proof of the latter theorem.  Define $m = \mbox{max} \{m_i| \ 1 \leq i \leq r\}$.  Without changing the homotopy type of $\embthm$ we can assume that the fixed affine embedding $\iota\colon \coprod_{i=1}^r \mathbb{R}^{m_i}\hookrightarrow\mathbb{R}^d$ can be factored through $\mathbb{R}^{m+1}$. Moreover let us  assume that 
$\iota$ restricted on each component $\mathbb{R}^{m_i}$ can be factored through an affine $m$-space $V_i
\subset \rbb^{m+1}$, so that all $V_i$, $i=1\ldots r$, are parallel to each other and disjoint. For a space $X$ let $\widehat{X}$ be its   one point compactification. Define $M_i$  as a tubular neighborhood of  $\widehat{\iota(\mathbb{R}^{m_i})}$ inside $\widehat{V}_i$ minus  the point at infinity. Simialrly define $M$ as an $(m+1)$-submanifold of $\mathbb{R}^{m+1}$ obtained as a regular neighborhood of the wedge of spheres
$\reallywidehat{\iota\left(\coprod_{i=1}^r \mathbb{R}^{m_i}\right)}$ in $\widehat{\mathbb{R}^{m+1}}$ minus $\infty$.

\begin{rmq}\label{r:manifolds}
We don\rq{}t need it for any of our arguments, but one can show that each $M_i$ is diffeomorphic to $\mathbb{R}^m\setminus S^{m-m_i-1}$, where $S^{m-m_i-1}$ is an unknotted $(m-m_i-1)$-sphere in $\rbb^m$. (By $(-1)$-sphere as usual we understand the empty set.) One can also show that 
$M$ is diffeomorphic to $\rbb^{m+1} \backslash \vee_{i=1}^r S^{m-m_i}$, where $\vee_{i=1}^r S^{m-m_i}$ is an unknotted wedge of spheres in $\rbb^{m+1}$.
\end{rmq}


\begin{prop} \label{crucial_prop}
The natural restriction maps 
\begin{eqnarray} \label{crucial_equiv}
\overline{\mathrm{Emb}}_c(M, \rdbb) \stackrel{\simeq}{\longrightarrow} \overline{\mathrm{Emb}}_c(\coprod_{i=1}^r M_i, \rdbb)\stackrel{\simeq}{\longrightarrow}\embthm  
\end{eqnarray}
are  weak equivalences.
\end{prop}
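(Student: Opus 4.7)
My plan is to prove each of the two restriction maps is a weak equivalence by identifying them as tubular-neighborhood restrictions and then exploiting the fact that the homotopy fiber $\overline{\mathrm{Emb}}_c=\mathrm{hofib}(\mathrm{Emb}_c\to\mathrm{Imm}_c)$ is insensitive to thickening of the source manifold.

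First I would make the geometric picture explicit. The inclusions $\coprod_{i=1}^{r} \iota(\rbb^{m_i}) \subset \coprod_{i=1}^{r} M_i \subset M$ are tubular-neighborhood inclusions: each $M_i$ is a rank-$(m-m_i)$ tubular neighborhood of $\iota(\rbb^{m_i})$ inside $V_i$, and once the point at infinity is deleted, the tubular neighborhood $M$ of $\reallywidehat{\iota\left(\coprod_{i=1}^r\rbb^{m_i}\right)}$ in $\widehat{\rbb^{m+1}}$ splits as a disjoint union of rank-$1$ tubular neighborhoods of the individual $M_i$'s inside $\rbb^{m+1}$. (Near the wedge point the tubular neighborhood looks like the cone on a disjoint union of normal disks, so deleting that point disconnects it.) Both maps in~\eqref{crucial_equiv} thus take the form $\mathrm{Emb}_c(A,\rdbb)\to\mathrm{Emb}_c(B,\rdbb)$ with $B$ the zero section of a tubular neighborhood $A\subset\rbb^{m+1}$.

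For the core of the argument I would show that, for such a pair $B\subset A$, the commutative square formed by the embedding and immersion restrictions from $A$ to $B$ is homotopy cartesian. By the parametrized tubular neighborhood theorem applied to the embedded $f(B)\subset\rdbb$, the homotopy fiber of $\mathrm{Emb}_c(A,\rdbb)\to\mathrm{Emb}_c(B,\rdbb)$ over $f$ identifies with the space of fibrewise-injective bundle maps from the normal bundle of $B\subset A$ into the normal bundle of $f(B)\subset\rdbb$, i.e.\ sections of a Stiefel bundle over $B$. By Smale--Hirsch immersion theory, the analogous fiber of the lower horizontal map is classified by the same bundle-monomorphism data: the tangential information of an immersion of $A$ extending $f$ decomposes as $Tf$ on $B$ plus a fibrewise injection of the normal bundle. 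Hence the two horizontal fibers agree, the square is homotopy cartesian, and the induced map on vertical homotopy fibers, namely on $\overline{\mathrm{Emb}}_c$, is a weak equivalence. Iterating this on the two inclusions in~\eqref{crucial_equiv} finishes the proof.

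The main obstacle will be executing this tubular-neighborhood-and-Smale--Hirsch comparison compatibly with the compact-support condition: one has to verify that the normal extension data matches the fixed behavior at infinity imposed by $\iota$, so that the fiber of extensions is the space of \emph{compactly-supported} framings rather than the full one. The fact that $\iota$ is affine on each component and factors through $\rbb^{m+1}$ supplies a canonical framing at infinity, so the verification should reduce to a bookkeeping argument, but it is the place where care is needed.
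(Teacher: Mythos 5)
Your overall mechanism is the same as the paper's: both restriction maps are regarded as ``forgetting a thickening,'' where the thickening contributes normal-framing data identically to embeddings and to immersions, so that this data cancels upon passing to the homotopy fiber. The paper phrases this as an identification of $\mathrm{Emb}_c(M,\rdbb)$, $\mathrm{Emb}_c(\coprod_i M_i,\rdbb)$ (and the corresponding immersion spaces) with spaces of compactly supported \emph{partially framed} long embeddings (resp.\ immersions) of $\coprod_i\rbb^{m_i}$ in $\rdbb$, with $m+1-m_i$ resp.\ $m-m_i$ independent normal sections on the $i$th component; you phrase it as the square $\mathrm{Emb}_c\to\mathrm{Imm}_c$ being homotopy cartesian over the two inclusions, via the parametrized tubular neighborhood theorem on the embedding side and Smale--Hirsch on the immersion side. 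These are the same idea.

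There is, however, a genuine error in your geometric setup. You claim that after deleting $\infty$ the tubular neighborhood $M$ of $\reallywidehat{\iota(\coprod_i\rbb^{m_i})}$ in $\widehat{\rbb^{m+1}}$ ``splits as a disjoint union of rank-$1$ tubular neighborhoods of the individual $M_i$'s,'' because deleting the wedge point disconnects the cone. This is false: $M$ is a connected open subset of $\rbb^{m+1}$, since a tubular neighborhood of a connected set in $S^{m+1}$ is connected and removing a single point from a connected manifold of dimension $m+1\geq 2$ does not disconnect it. (Locally near the cone point the cone-minus-vertex is indeed disconnected, but those local pieces reconnect through the rest of the neighborhood of each sphere; your figure-eight-in-$S^2$ analogue already fails.) Since $\coprod_i M_i$ has $r$ components while $M$ has one, $M$ cannot be a tubular neighborhood of $\coprod_i M_i$, so the first map in \eqref{crucial_equiv} is not literally a ``restriction to the zero section'' in your sense. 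A similar issue affects the second map: $M_i$ is a tubular neighborhood of the sphere $\widehat{\iota(\rbb^{m_i})}$ in $\widehat{V}_i$ minus one point on the zero section, hence (as Remark~\ref{r:manifolds} states) diffeomorphic to $\rbb^m\setminus S^{m-m_i-1}$, which is not a tubular neighborhood of $\iota(\rbb^{m_i})$ in $V_i$.

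Both discrepancies live entirely ``near infinity,'' where the compact-support condition freezes everything, so the statement is true and your approach can be repaired. But the repair is exactly the step the paper performs: one does not assert any global tubular-neighborhood identification; instead one directly checks that a compactly supported embedding (or immersion) of $M$, resp.\ of $\coprod_i M_i$, is equivalent to a compactly supported embedding (or immersion) of $\coprod_i\rbb^{m_i}$ together with $m+1-m_i$ (resp.\ $m-m_i$) compactly supported normal sections on the $i$th component, using that only a compact core of each thickened manifold is actually in play. Your final paragraph correctly flags compact support as the delicate point, but locates it only in matching framings at infinity; the prior claim---that the thickened manifolds \emph{are} disjoint tubular neighborhoods to begin with---is the step that actually needs this care.
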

\begin{proof}
It is easy to see that the embedding spaces $\mathrm{Emb}_c(M, \rdbb)$ and 
$\mathrm{Emb}_c(\coprod_{i=1}^r M_i, \rdbb)$ (respectively immersion spaces $\mathrm{Imm}_c(M, \rdbb)$ and 
$\mathrm{Imm}_c(\coprod_{i=1}^r M_i, \rdbb)$) are homotopy equivalent to the spaces 
$\mathrm{Emb}^{(m+1)\text{-fr}}_c(\coprod_{i=1}^r \mathbb{R}^{m_i}, \rdbb)$ and
$\mathrm{Emb}^{m\text{-fr}}_c(\coprod_{i=1}^r \mathbb{R}^{m_i}, \rdbb)$ (respectively
$\mathrm{Imm}^{(m+1)\text{-fr}}_c(\coprod_{i=1}^r \mathbb{R}^{m_i}, \rdbb)$ and $\mathrm{Imm}^{m\text{-fr}}_c(\coprod_{i=1}^r \mathbb{R}^{m_i}, \rdbb)$)
of partially framed long embeddings  $\coprod_{i=1}^r \mathbb{R}^{m_i}\hookrightarrow\mathbb{R}^d$ (respectively 
immersions $\coprod_{i=1}^r \mathbb{R}^{m_i}\looparrowright\mathbb{R}^d$). By a \lq\lq partially framing\rq\rq{}
 of a long link we mean that the $i$th component of a link has $(m+1-m_i)$ in case of $M$ and $(m-m_i)$ in case of $\coprod_{i=1}^r M_i$ everywhere linearly independent sections of the normal bundle, which must coincide with a fixed constant partial framing outside a compact set. When we pass to the homotopy fiber 
 $\overline{\mathrm{Emb}}{}^{\ell\text{-fr}}_c(\coprod_{i=1}^r \mathbb{R}^{m_i}, \rdbb)$, $\ell=m+1$ or $m$,
or in other words look at the spaces of partially framed embeddings modulo partially framed immersions, this partial framing 
\lq\lq{}cancels out\rq\rq{}. To be precise, the natural projection 
\begin{equation}\label{eq:fibers}
\overline{\mathrm{Emb}}{}^{\ell\text{-fr}}_c(\coprod_{i=1}^r \mathbb{R}^{m_i}, \rdbb)\to\emb
\end{equation}
is a weak equivalence. It is a Serre fibration with the fiber over any point homeomorphic to the product of $r$ spaces. The $i$th factor of this product
is homeomorphic to the space of maps
\[
\mathbb{R}^{m_i}\times [0,1]\to SO(d-m_i)/SO(d-\ell)
\]
which are constant maps to the basepoint everywhere except a compact subset of $\mathbb{R}^{m_i}\times [0,1)$. Each such factor
is a contractible space. To see it, consider the precomposition with the homotopy
\[
h_\tau\colon \mathbb{R}^{m_i}\times [0,1]\to \mathbb{R}^{m_i}\times [0,1],\,\, \tau\in[0,1],
\]
defined as $h_\tau(x,t)= (x,(1-\tau)t+\tau)$ which deformation retracts $\mathbb{R}^{m_i}\times [0,1]$ onto 
$\mathbb{R}^{m_i}\times \{1\}$.
\end{proof}


One of the reasons we need all the components of our links to be of the same dimension is that we want to apply the 
 theory  \cite{good_weiss99,weiss99,weiss04} of the Goodwillie-Weiss manifold functor calculus. If $F$ is a functor from a category of open subsets of a manifold (which serves as an input in that theory), the $k$th approximation of $F$ is denoted $T_kF$, $k \leq \infty$. The sequence $\{ T_kF\}_{0 \leq k \leq \infty}$ is usually called the \textit{Taylor tower} associated to $F$, which converges, under good assumptions, to $F$. In our situation we apply the functor calculus to the categories of open sets of  $\mathbb{R}^{m+1}$, 
 respectively of $\coprod_{i=1}^r \mathbb{R}^m$, whose complement 
  is compact.

\begin{prop}\label{p:equiv_towers}
The restriction map $\overline{\mathrm{Emb}}_c(M, \rdbb) \stackrel{\simeq}{\longrightarrow} \overline{\mathrm{Emb}}_c(\coprod_{i=1}^r M_i, \rdbb)$ induces an equivalence of Goodwillie-Weiss towers
$$
T_kC_*\overline{\mathrm{Emb}}_c(M, \rdbb) \stackrel{\simeq}{\longrightarrow} T_kC_*\overline{\mathrm{Emb}}_c(\coprod_{i=1}^r M_i, \rdbb), \quad  0\leq k\leq \infty.
$$
\end{prop}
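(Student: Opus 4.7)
The plan is to refine the space-level equivalence from Proposition~\ref{crucial_prop} into a levelwise equivalence of Goodwillie-Weiss towers, by factoring the restriction map through an intermediate manifold.

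First I would exploit the geometry: since the $V_i$ are mutually parallel in $\rbb^{m+1}$, each $M_i$ sits inside $M$ as a closed codimension-one submanifold with trivial normal line bundle, and one can choose disjoint open tubular neighborhoods $U_i\cong M_i\times(-\epsilon,\epsilon)\subset M$. Setting $U=\coprod_{i=1}^r U_i$, the open inclusion $U\hookrightarrow M$ is a codimension-zero inclusion of open $(m+1)$-manifolds, inducing by naturality of the manifold calculus a map of towers $T_kC_*\overline{\mathrm{Emb}}_c(M,\rdbb)\to T_kC_*\overline{\mathrm{Emb}}_c(U,\rdbb)$ for every $k\leq\infty$. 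I would argue this is a levelwise equivalence: the complement $M\setminus U$ lies near the removed wedge point at infinity, outside the compact support of $\iota$. Since compactly supported embeddings are determined by their restriction to any open neighborhood of the support region, the values of the embedding functor on open handlebodies in $M$ and in $U$ agree, identifying the cubical diagrams that assemble into $T_k$.

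The second, more delicate step is to identify $T_k C_*\overline{\mathrm{Emb}}_c(U,\rdbb)$ with $T_kC_*\overline{\mathrm{Emb}}_c(\coprod_{i=1}^r M_i,\rdbb)$. Since $U$ is a trivial line bundle over $\coprod_i M_i$, the space $\mathrm{Emb}_c(U,\rdbb)$ agrees with the space of $(m+1-m_i)$-partially framed long embeddings, while $\mathrm{Emb}_c(\coprod_i M_i,\rdbb)$ corresponds to the $(m-m_i)$-partially framed version, and similarly for the immersion spaces. As in the proof of Proposition~\ref{crucial_prop}, this extra partial framing cancels in the homotopy fiber $\overline{\mathrm{Emb}}_c$ over the immersion space. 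To transport the cancellation to every polynomial stage, I would invoke (i) the compatibility of $T_k$ with homotopy fibers over $\mathrm{Imm}_c$, available in the range $d>2m+1$, and (ii) the fact that the forgotten frame direction enters the embedding and immersion towers through the same universal tangent-framing factor, so that it cancels universally in $\overline{\mathrm{Emb}}_c$ at every arity.

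Chaining the two equivalences yields the desired levelwise equivalence $T_kC_*\overline{\mathrm{Emb}}_c(M,\rdbb)\stackrel{\simeq}{\to}T_kC_*\overline{\mathrm{Emb}}_c(\coprod_{i=1}^r M_i,\rdbb)$ for all $0\leq k\leq\infty$. The main obstacle is the second step: while the partial-framing cancellation is essentially bookkeeping at the space level, propagating it to every polynomial stage demands a careful operadic treatment of how the Taylor tower encodes tangential framing data and interacts with the map to immersions, and it is here that the high-codimension hypothesis $d>2m+1$ enters in an essential way.
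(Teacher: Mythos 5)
Your proposal factors the restriction through a tubular neighborhood $U = \coprod_i U_i$, which is a genuinely different route from the paper's. The paper does not introduce any intermediate manifold: it observes that for $k<\infty$ both $T_kC_*\overline{\mathrm{Emb}}_c(M,\rdbb)$ and $T_kC_*\overline{\mathrm{Emb}}_c(\coprod_i M_i,\rdbb)$ are finite homotopy limits over diagrams encoding handle decompositions and iterated $(k+1)$-subcubes, that these diagrams are in natural correspondence, and that the restriction maps between corresponding vertices are already weak equivalences by exactly the partial-framing argument of Proposition~\ref{crucial_prop}. The equivalence of the homotopy limits is then automatic, with no need to "propagate" anything through the tower machinery. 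This is precisely where your step 2 founders: you correctly flag it as the delicate dimension-changing step, and propose invoking an abstract compatibility of $T_k$ with homotopy fibers over $\mathrm{Imm}_c$ plus a "universal tangent-framing factor," but that is the wrong angle of attack — the comparison should be made objectwise on the cubical diagrams of opens, not at the level of the assembled tower (and note that $T_k$ is applied to the chain-level functor $V\mapsto C_*\overline{\mathrm{Emb}}_c(V,\rdbb)$, so $C_*$ sits between the homotopy fiber and the homotopy limit and does not commute with either). Your step 1 is also thinner than it looks: $M\setminus U$ is not compact, so $U$ does not have compact complement in $M$, the indexing posets for the two compactly-supported towers are not nested, and "compactly supported embeddings are determined near the support region" is a statement about functor values, not by itself an identification of the two homotopy limits. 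Both of your steps really require the same diagram-level comparison that the paper performs once, directly, for $M$ versus $\coprod_i M_i$.

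Finally, the closing sentence of your proposal is wrong: the codimension hypothesis $d>2m+1$ plays no role in this proposition. It is stated and proved without any constraint on $d$, like Proposition~\ref{crucial_prop} on which it rests. That hypothesis enters elsewhere in the paper, for the convergence $T_\infty C_*\overline{\mathrm{Emb}}_c\simeq C_*\overline{\mathrm{Emb}}_c$ and for relative formality, not for the equivalence of Taylor towers. In particular, the compatibility of $T_k$ with homotopy fibers of space-valued functors is a purely formal fact about commuting homotopy limits and needs no codimension assumption at all.
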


\begin{proof}
By construction~\cite[Section~6]{weiss99} for  $k<\infty$, the functors $T_kC_*\overline{\mathrm{Emb}}_c(-, \rdbb)$ and  
$T_kC_*\overline{\mathrm{Emb}}_c(\coprod_{i=1}^r (-)_i, \rdbb)$ are the best degree $k$ polynomial approximations 
to the contravariant functors
\[
C_*\overline{\mathrm{Emb}}_c(-, \rdbb)\colon \widetilde{O}(\mathbb{R}^{m+1})\to \mathrm{Ch}_*
\]
 and  
 \[
C_*\overline{\mathrm{Emb}}_c(\coprod_{i=1}^r (-)_i, \rdbb)\colon \widetilde{O}(\coprod_r\mathbb{R}^m)\to \mathrm{Ch}_*,
\]
 respectively. Here $\mathrm{Ch}_*$ is the model category of chain complexes over rational numbers;  $\widetilde{O}(\mathbb{R}^{m+1})$ and $\widetilde{O}(\coprod_r\mathbb{R}^m)$ are the categories of open subsets of $\mathbb{R}^{m+1}$ 
 and $\coprod_r\mathbb{R}^m$, respectively, whose complement is compact.
 Since both these functors are polynomial of degree~$k$,  both complexes $T_kC_*\overline{\mathrm{Emb}}_c(M, \rdbb)$ and  
$T_kC_*\overline{\mathrm{Emb}}_c(\coprod_{i=1}^r M_i, \rdbb)$ are obtained as a homotopy limit over a finite diagram encoding the handle attachments  and iteration of the limits over corresponding $(k+1)$-subcubical diagrams, 
see~\cite[Definition~2.2]{weiss99}. One has a natural transformation induced by restriction between the two diagrams which is an objectwise equivalence. As a consequence the homotopy limits must be also equivalent.
  Because homotopy inverse limits of equivalences are equivalences, the $k=\infty$ case follows. 
\end{proof}
 

 Recalling that if $X$ is a pointed topological space, then the sequence $X^{\times\bullet} = \{ \underbrace{X \times \cdots \times X}_{k}\}_{k \geq 0}$ is equipped with a natural structure of a right $\Gamma$-module: to every finite pointed set we assign the space of base preserving maps from it to $X$.   We have the following theorem  in which the notations "$\underset{\Gamma}{\mbox{hRmod}}$" and "$C(\bullet, \rdbb)$" were  introduced in Section~\ref{s:intro}.

\begin{thm} \label{rational_homology_thm2}
For $d > 2 \mathrm{max}\{m_i| \ 1 \leq i \leq r\} + 1$, there is a quasi-isomorphism
\[C_*(\embthm) \otimes \mathbb{Q}  \simeq \underset{\Gamma}{\mathrm{hRmod}} \left(C_*((\vee_{i=1}^r S^{m_i})^{\times\bullet}, \qbb), H_*(C(\bullet, \rdbb), \qbb) \right).\]
\end{thm}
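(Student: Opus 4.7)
My strategy is to adapt the Arone--Turchin argument from~\cite{aro_tur12} (where $r=1$) to the multi-component setting, using Propositions~\ref{crucial_prop} and~\ref{p:equiv_towers} to reduce to a single connected manifold, and then applying two formality inputs.

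First, by Proposition~\ref{crucial_prop} I can replace $\embthm$ by $\overline{\mathrm{Emb}}_c(M,\rdbb)$ up to weak equivalence, and by Proposition~\ref{p:equiv_towers} the Goodwillie--Weiss Taylor towers agree. Since $d-(m+1)\geq m+1\geq 2$ under the hypothesis $d>2m+1$ (where $m=\max\{m_i\}$), the manifold calculus of~\cite{good_weiss99,weiss04} guarantees that the Taylor tower $T_\infty C_*\overline{\mathrm{Emb}}_c(M,\rdbb)$ converges rationally to $C_*\overline{\mathrm{Emb}}_c(M,\rdbb)\otimes\qbb$. Working with a single connected manifold $M$ is essential because the operadic input to the Arone--Turchin tower is built from the one-point compactification $\widehat{M}$, which is naturally identified with a pointed space.

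Second, I would reinterpret $T_\infty C_*\overline{\mathrm{Emb}}_c(M,\rdbb)$ using the operadic description of the tower from~\cite{aro_tur12}: it is equivalent to a derived mapping space of infinitesimal bimodules over the little $(m+1)$-discs operad, with source $C_*(\widehat{M}^{\times\bullet})$ (the $\Gamma$-module of chains on the levelwise self-products of $\widehat{M}$) and target the Fulton--MacPherson chain model $C_*(\mathrm{FM}_d(\bullet))$ of configurations in $\rdbb$. Invoking~\cite[Corollary~4.10]{aro_tur12} to convert infinitesimal bimodules over $\mathrm{Com}$ into right $\Gamma$-modules, the tower becomes
\[
\underset{\Gamma}{\mathrm{hRmod}}\bigl(C_*(\widehat{M}^{\times\bullet}),\, C_*(\mathrm{FM}_d(\bullet))\bigr).
\]

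Third, I would apply two formality inputs to match the right-hand side of the theorem. On the target side, relative formality of the operad inclusion $\mathcal{B}_{m+1}\hookrightarrow\mathcal{B}_d$, established in~\cite{lam_vol} and extended to the range $d\geq(m+1)+2$ in~\cite{tur_wilw}, allows me to replace the Fulton--MacPherson chains by the homology $\Gamma$-module $H_*(C(\bullet,\rdbb),\qbb)$. On the source side, $M$ was constructed precisely as a tubular neighborhood in $S^{m+1}\setminus\{\infty\}$ of the wedge $\widehat{\iota(\coprod\rbb^{m_i})}\simeq\vee_{i=1}^r S^{m_i}$, so $\widehat{M}\simeq\vee_{i=1}^r S^{m_i}$ as pointed spaces, yielding a quasi-isomorphism of $\Gamma$-modules $C_*(\widehat{M}^{\times\bullet})\simeq C_*((\vee_{i=1}^r S^{m_i})^{\times\bullet},\qbb)$. (Here the formality of this $\Gamma$-module, equivalent to the formality of the $\Omega$-module $\widetilde{C}_*((\vee S^{m_i})^{\wedge\bullet})$ noted in the introduction, will be needed downstream when passing to rational homology as in Theorem~\ref{rational_homology_thm}.)

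The main obstacle is Step~2: setting up the Arone--Turchin operadic presentation of the Goodwillie--Weiss tower for a manifold $M$ that is more complicated than $\rbb^m$, and naturally identifying its source with the $\Gamma$-module $C_*(\widehat{M}^{\times\bullet})$. One must verify that the homotopy equivalence $\widehat{M}\simeq\vee_{i=1}^r S^{m_i}$ of pointed spaces is sufficiently natural to induce a quasi-isomorphism of $\Gamma$-modules, not just levelwise, and that the relative formality theorem can be applied coherently with the right $\Gamma$-module structure. Once these compatibilities are in place, the three steps assemble to the desired quasi-isomorphism.
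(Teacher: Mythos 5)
Your Steps 2 and 3 track the paper closely (the paper simply cites \cite[Proposition~2.11]{songhaf13} as a packaged form of the Arone--Turchin operadic description you unfold, and then applies the relative formality from~\cite{tur_wilw} and the identification $\widehat{M}\simeq\vee_{i=1}^r S^{m_i}$), so the main body of the argument is on the right track. However, there is a genuine gap in your Step~1.

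You apply the Goodwillie--Weiss convergence theorem of~\cite{weiss04} directly to $M$, which has dimension $m+1$. That theorem requires $d>2\dim(M)+1 = 2(m+1)+1 = 2m+3$, not $d-\dim(M)\geq 2$ as you assert; the hypothesis $d>2m+1$ (i.e.\ $d\geq 2m+2$) is strictly weaker, so your claimed convergence $T_\infty C_*\overline{\mathrm{Emb}}_c(M,\rdbb)\simeq C_*\overline{\mathrm{Emb}}_c(M,\rdbb)\otimes\qbb$ is not available in the stated range (e.g.\ $m=2$, $d=6$ satisfies $d>2m+1$ but not $d>2m+3$). The paper avoids this precisely by \emph{not} applying convergence to $M$: it first uses Proposition~\ref{crucial_prop} to pass to $\coprod_i M_i$ (dimension $m$), applies convergence there where $d>2m+1$ does suffice, and only then invokes Proposition~\ref{p:equiv_towers} to identify $T_\infty C_*\overline{\mathrm{Emb}}_c(\coprod_i M_i,\rdbb)$ with $T_\infty C_*\overline{\mathrm{Emb}}_c(M,\rdbb)$. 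In your write-up you mention Proposition~\ref{p:equiv_towers} only in passing and never use it to sidestep the dimension jump, so the role of that proposition — which is the heart of why $M$ can be used at all — is missing. Reordering Step~1 as: (i) $C_*\emb\simeq C_*\overline{\mathrm{Emb}}_c(\coprod_iM_i,\rdbb)$ by Proposition~\ref{crucial_prop}, (ii) convergence for $\coprod_iM_i$ using $d>2m+1$, (iii) tower equivalence with $M$ by Proposition~\ref{p:equiv_towers}, repairs the argument.
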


\begin{proof}
Let us start with the following weak equivalences.
\begin{eqnarray} \label{weak_equiv1}
\begin{array}{cccc}
C_*\emb & \simeq & C_*\overline{\mbox{Emb}}_c(\coprod_{i=1}^rM_i, \rdbb) &   \mbox{by Proposition}~\ref{crucial_prop} \\
 & \simeq &  T_\infty C_*\overline{\mbox{Emb}}_c(\coprod_{i=1}^rM_i, \rdbb) &\\
 & \simeq & T_\infty C_*\overline{\mbox{Emb}}_c(M, \rdbb)&  \mbox{by Proposition}~\ref{p:equiv_towers}.
\end{array}
\end{eqnarray}
The second equivalence is the fact that the Goodwillie-Weiss tower for singular chains of embeddings converges for $d>2m+1$ \cite{weiss04}.
 
 By the Arone-Turchin work \cite{aro_tur12}, one has \cite[Proposition 2.11]{songhaf13}, which states that if $N \subseteq \rbb^n$ is the complement of a compact subset of $\rbb^n$, then 
 \begin{eqnarray} \label{codim_condition}
  T_{\infty} C_*(\overline{\mbox{Emb}}_c(N, \rdbb), \qbb)  \simeq 
  \underset{\Gamma}{\mathrm{hRmod}} (C_*(\widehat{N}^{\times\bullet}), H_*(C(\bullet, \rdbb); \qbb) ), 
  \end{eqnarray}
  where $\widehat{N}$ is  is an open subset of the one-point compactification of $\rbb^n$ consisting of $N$ and $\infty$. This space is pointed at $\infty$. In~\cite[Proposition 2.11]{songhaf13} this result is stated in the range $d>2n+1$ as it uses the relative formality of the little discs operads which was proved at that time only in this range~\cite{lam_vol}. As it turned the relative formality holds in larger range  $d>n+1$ \cite{tur_wilw} implying~\eqref{codim_condition} for $d>n+1$.

By rewriting (\ref{codim_condition})  with our manifold $M \subseteq \rbb^{m+1}$, we obtain the following weak equivalence (for  $d > m +2$): 
 \begin{eqnarray} \label{codim_condition2}
  T_{\infty} C_*(\overline{\mbox{Emb}}_c(M, \rdbb), \qbb)  \simeq 
  \underset{\Gamma}{\mathrm{hRmod}} (C_*(\widehat{M}^{\times\bullet}), H_*(C(\bullet, \rdbb); \qbb) ). 
  \end{eqnarray}
Combining now (\ref{weak_equiv1}) and (\ref{codim_condition2}), and using the fact that $\widehat{M}$ is homotopy equivalent to $\vee_{i=1}^r S^{m_i}$ we get exactly the statement of our theorem.
\end{proof}

Now we pass from $\Gamma$-modules to $\Omega$-modules. In short we do it as it reduces computatons.
In \cite{pira00} Pirashvili constructs a functor (called the \textit{cross effect}) 
\begin{eqnarray} \label{cross_effect_functor}
cr \colon \underset{\Gamma}{\mbox{Rmod}}  \lra \underset{\Omega}{\mbox{Rmod}}
\end{eqnarray}
 that turns out to be an equivalence of categories.  This functor is defined as follows. Let $L$ be a right $\Gamma$-module. 
 Let $k_+$ be the set $\{0,1,\ldots,k\}$ pointed in~$0$, and $k$ denote the set $\{1,\ldots,k\}$. Let 
 $
 \alpha_i\colon k_+\to (k-1)_+
 $
 be the map 
 \[
 \alpha_i(j)=
 \begin{cases}
 j,&j<i;\\
 0,&j=i;\\
 j-1,&j>i.
 \end{cases}
 \]
 Then the component $cr L(k)$ is the quotient of $L(k_+)$ by the sum of the images of the induced maps $$\alpha_i^*\colon
  L((k-1)_+)\to L(k_+).$$

    Define  
\[ \widehat{H}_*(C(\bullet, \rdbb), \qbb) = cr \left( H_*(C(\bullet, \rdbb), \qbb) \right).\] 
By noticing that the cross effect of $C_*((\vee_{i=1}^r S^{m_i})^{\times\bullet}, \qbb)$ is equivalent to $\widetilde{C}_*((\vee_{i=1}^r S^{m_i})^{\wedge \bullet}, \qbb)$, where $\widetilde{C}_*(-)$ denotes the reduced singular chain functor, and "$\wedge$" is the usual smash product of spaces, we have the following corollary. 

\begin{coro} \label{rational_homology_coro}
For $d > 2 \mathrm{max}\{m_i| \ 1 \leq i \leq r\} + 1$, there is a quasi-isomorphism
\[C_*(\embthm) \otimes \mathbb{Q}  \simeq \underset{\Omega}{\mathrm{hRmod}} \left(\widetilde{C}_*((\vee_{i=1}^r S^{m_i})^{\wedge \bullet}, \qbb), \widehat{H}_*(C(\bullet, \rdbb), \qbb) \right).\]
\end{coro}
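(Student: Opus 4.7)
The plan is to deduce this corollary directly from Theorem~\ref{rational_homology_thm2} by transporting the derived mapping space on the right-hand side across Pirashvili's cross effect equivalence $cr\colon \underset{\Gamma}{\mbox{Rmod}}\to\underset{\Omega}{\mbox{Rmod}}$ of~\eqref{cross_effect_functor}. The essential point is that $cr$ is an equivalence of abelian categories, hence is exact and preserves projective/injective objects; it therefore lifts to an equivalence of the associated categories of chain complexes that preserves quasi-isomorphisms, cofibrations, and fibrations in the projective model structure. In particular $cr$ and its inverse induce a quasi-isomorphism on derived mapping spaces: for any two right $\Gamma$-modules $A$ and $B$ one has
\[
\underset{\Gamma}{\mathrm{hRmod}}(A,B)\;\simeq\;\underset{\Omega}{\mathrm{hRmod}}(cr(A), cr(B)).
\]

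Next, I would identify the two cross effects that appear. By definition, $cr(H_*(C(\bullet,\rdbb),\qbb))=\widehat{H}_*(C(\bullet,\rdbb),\qbb)$, which handles the target. For the source, I need the standard computation that the cross effect of the right $\Gamma$-module $Y^{\times\bullet}$ associated to a pointed space $Y$ is the right $\Omega$-module $\widetilde{C}_*(Y^{\wedge\bullet})$ (at the chain level, after applying $C_*$): the cross effect of a $\Gamma$-module evaluated at $\{1,\dots,k\}$ is the subspace cut out by the kernels of all the basepoint-inclusion maps, and for $Y^{\times k}$ this precisely recovers the reduced chains on the $k$-fold smash product. Applying this to $Y=\vee_{i=1}^r S^{m_i}$ gives $cr(C_*((\vee_{i=1}^r S^{m_i})^{\times\bullet},\qbb))\cong \widetilde{C}_*((\vee_{i=1}^r S^{m_i})^{\wedge\bullet},\qbb)$.

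Combining these two identifications with the quasi-isomorphism furnished by Theorem~\ref{rational_homology_thm2}, I obtain
\[
C_*(\embthm)\otimes\qbb\;\simeq\;\underset{\Gamma}{\mathrm{hRmod}}\bigl(C_*((\vee_i S^{m_i})^{\times\bullet},\qbb),\,H_*(C(\bullet,\rdbb),\qbb)\bigr)\;\simeq\;\underset{\Omega}{\mathrm{hRmod}}\bigl(\widetilde{C}_*((\vee_i S^{m_i})^{\wedge\bullet},\qbb),\,\widehat{H}_*(C(\bullet,\rdbb),\qbb)\bigr),
\]
which is the statement of the corollary. The only potential obstacle is checking carefully that $cr$, a priori an equivalence of underlying abelian categories, really does induce an equivalence of derived categories (equivalently, of derived mapping spaces in the projective model structure on chain complexes); this follows from exactness of both $cr$ and its inverse, but it is the one point that deserves explicit verification. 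Everything else is a straightforward substitution.
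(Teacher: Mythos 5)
Your proposal is correct and matches the paper's own argument: the paper likewise derives the corollary from Theorem~\ref{rational_homology_thm2} by applying Pirashvili's cross effect equivalence $cr$, defining $\widehat{H}_*(C(\bullet,\rdbb),\qbb)$ as $cr$ of the target and observing that $cr$ of $C_*((\vee_{i=1}^r S^{m_i})^{\times\bullet},\qbb)$ is $\widetilde{C}_*((\vee_{i=1}^r S^{m_i})^{\wedge\bullet},\qbb)$. Your remark about verifying that an exact equivalence of abelian categories induces an equivalence on derived mapping spaces is a sound point of care that the paper leaves implicit.
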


To finish the proof of Theorem~\ref{rational_homology_thm} we only need to show that the $\Omega$-module 
$\widetilde{C}_*((\vee_{i=1}^r S^{m_i})^{\wedge \bullet})$ is rationally formal. This follows from  
Lemma~\ref{formality_lem} and also the fact that each  $\widetilde{C}_*((S^{m_i})^{\wedge \bullet})$, $1 \leq i \leq r$, is formal (by \cite[Lemma 8.5]{aro_tur12}).

\begin{lem} \label{formality_lem} Let $X$ and $Y$ be two pointed topological spaces. Assume that  the right $\Omega$-modules $\widetilde{C}_*(X^{\wedge \bullet})$ and $\widetilde{C}_*(Y^{\wedge \bullet})$ are formal in the category $\underset{\Omega}{\mathrm{Rmod}}$ of right $\Omega$-modules. Then $\widetilde{C}_*((X \vee Y)^{\wedge \bullet})$ is formal in the same category.
\end{lem}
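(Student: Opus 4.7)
The plan is to identify $\widetilde{C}_*((X\vee Y)^{\wedge\bullet})$ with a natural ``box product'' of the two right $\Omega$-modules $\widetilde{C}_*(X^{\wedge\bullet})$ and $\widetilde{C}_*(Y^{\wedge\bullet})$, and then to observe that this box product preserves quasi-isomorphisms. First, distributing smash over wedge gives a natural splitting
\[
(X\vee Y)^{\wedge k} \;\cong\; \bigvee_{k=S\sqcup T}  X^{\wedge S}\wedge Y^{\wedge T},
\]
indexed by ordered partitions of $\{1,\dots,k\}$. Applying $\widetilde{C}_*(-;\qbb)$ together with the wedge splitting of reduced chains and the rational Eilenberg--Zilber quasi-isomorphism for smash products, one gets a natural quasi-isomorphism
\[
\widetilde{C}_*((X\vee Y)^{\wedge k}) \;\simeq\; \bigoplus_{k=S\sqcup T} \widetilde{C}_*(X^{\wedge S})\otimes \widetilde{C}_*(Y^{\wedge T}).
\]

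Next, define a box product of right $\Omega$-modules of chain complexes by
\[
(A\boxtimes B)(k) \;:=\; \bigoplus_{k=S\sqcup T} A(S)\otimes B(T),
\]
where a surjection $\sigma\colon k\to l$ acts on the summand $A(S')\otimes B(T')$ of $(A\boxtimes B)(l)$ by sending it to the summand of $(A\boxtimes B)(k)$ indexed by $\sigma^{-1}(S')\sqcup\sigma^{-1}(T')$ via $A(\sigma|_{\sigma^{-1}(S')})\otimes B(\sigma|_{\sigma^{-1}(T')})$. I would then verify, by tracing how the diagonal-type map $\sigma^*\colon (X\vee Y)^{\wedge l}\to (X\vee Y)^{\wedge k}$ acts on the summand indexed by a coloring $g\colon l\to\{X,Y\}$, that it lands in the summand indexed by $g\circ\sigma$. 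This promotes the above identification to a quasi-isomorphism of right $\Omega$-modules
\[
\widetilde{C}_*((X\vee Y)^{\wedge\bullet}) \;\simeq\; \widetilde{C}_*(X^{\wedge\bullet})\boxtimes \widetilde{C}_*(Y^{\wedge\bullet}),
\]
and, by the same recipe with the rational Künneth isomorphism in homology, a corresponding isomorphism
\[
\widetilde{H}_*((X\vee Y)^{\wedge\bullet}) \;\cong\; \widetilde{H}_*(X^{\wedge\bullet})\boxtimes \widetilde{H}_*(Y^{\wedge\bullet}).
\]

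Since $\boxtimes$ is built pointwise from finite direct sums and tensor products over $\qbb$, and tensor product over a field is exact, the bifunctor $-\boxtimes-$ preserves quasi-isomorphisms in each variable separately. Hence any zigzags of quasi-isomorphisms realizing the hypothesised formality of the two factors assemble into a zigzag
\[
\widetilde{C}_*((X\vee Y)^{\wedge\bullet}) \;\simeq\; \widetilde{C}_*(X^{\wedge\bullet})\boxtimes \widetilde{C}_*(Y^{\wedge\bullet}) \;\simeq\; \widetilde{H}_*(X^{\wedge\bullet})\boxtimes \widetilde{H}_*(Y^{\wedge\bullet}) \;\cong\; \widetilde{H}_*((X\vee Y)^{\wedge\bullet}),
\]
proving the lemma. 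The main point requiring attention is the compatibility of the wedge-summand decomposition with the $\Omega$-structure, i.e.\ checking that surjections do not mix summands indexed by distinct colorings; this reduces to observing that $\sigma^*\colon (X\vee Y)^{\wedge l}\to(X\vee Y)^{\wedge k}$ is the diagonal-and-permutation $(z_1,\dots,z_l)\mapsto(z_{\sigma(1)},\dots,z_{\sigma(k)})$, which preserves the color of each slot. Ensuring that the rational Eilenberg--Zilber quasi-isomorphism is natural enough to assemble coherently over all $k$ is standard.
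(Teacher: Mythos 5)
Your proposal is correct and follows essentially the same route as the paper: both decompose $(X\vee Y)^{\wedge k}$ over colorings of $\{1,\dots,k\}$, re-express $\widetilde{C}_*((X\vee Y)^{\wedge\bullet})$ as a convolution-type construction applied to the two factor modules, and conclude by exactness. The only cosmetic difference is that the paper factors through an intermediate right $\Omega\times\Omega$-module $\lambda$ and a functor $\Delta\colon\underset{\Omega\times\Omega}{\mathrm{Rmod}}\to\underset{\Omega}{\mathrm{Rmod}}$, whereas you package the same data directly as a bifunctor $\boxtimes$ on $\underset{\Omega}{\mathrm{Rmod}}$.
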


\begin{proof}
Notice first that for each $k \geq 0$ we have 
$$(X \vee Y)^{\wedge k} \simeq \underset{f \colon \{1, \cdots, k \} \lra \{1,2 \}}{\bigvee} X^{\wedge f^{-1}(1)} \wedge Y^{\wedge f^{-1}(2)}.$$ 
This implies 
\begin{multline}\label{eq:wedge}
\widetilde{C}_*((X \vee Y)^{\wedge k} )  \simeq
 \underset{f \colon \{1, \cdots, k \} \lra \{1,2 \}}{\bigoplus}
 \widetilde{C}_*(X^{\wedge f^{-1}(1)}\wedge Y^{\wedge f^{-1}(2)})
 \simeq\\
   \underset{f \colon \{1, \cdots, k \} \lra \{1,2 \}}{\bigoplus}
 \widetilde{C}_*(X^{\wedge f^{-1}(1)}) \otimes \widetilde{C}_*(Y^{\wedge f^{-1}(2)}).
\end{multline}

Consider the contravariant functor $\lambda \colon \Omega \times \Omega \lra \mbox{Ch}_*$, from $\Omega \times \Omega$ to chain complexes, defined by $\lambda(p,q) = \widetilde{C}_*(X^{\wedge p}) \otimes \widetilde{C}_*(Y^{\wedge q}).$
Consider also the  functor $\Delta \colon \underset{\Omega \times \Omega}{\mathrm{Rmod}}\to \underset{\Omega}{\mathrm{Rmod}}$ defined by 
\[\Delta (\mu)(k) = \underset{f \colon \{1, \cdots, k \} \lra \{1,2 \}}{\bigoplus} \mu(f^{-1}(1), f^{-1}(2)), \]
where $\mu$ is a right $\Omega \times \Omega$ module.  By~\eqref{eq:wedge}, $\Delta(\lambda)$ is equivalent to $\widetilde{C}_*((X \vee Y)^{\wedge \bullet})$ whose formality we want to show.
Since $\lambda$ is formal by hypothesis, that is there exists a  zigzag of natural quasi-isomorphisms, $\lambda \simeq H_*(\lambda)$, between $\lambda$ and $H_*(\lambda)$, it follows that 
$$
\Delta (\lambda) \simeq \Delta (H_*(\lambda)) = H_*(\Delta (\lambda)).
$$
\end{proof}


%
%
%
%
%
%
%
%
%
%
%
%
%

\section{Hairy graph-complexes} \label{graph_complexes}

The goal of this section is to prove Theorem~\ref{rational_homotopy_thm} announced in the introduction.  At the end we also explain how our graph-complexes are related to the modular envelop of ${\mathcal L}_\infty$.


\subsection{Graph-complexes $\empi$ and $\emh$} \label{graphs_complexes_emh-empi}

In this subsection we will explicitly describe  two graph-complexes: $\emh$ and $\empi$. In the next subsection we will show that they respectively compute the rational homology and the rational homotopy of $\emb$. Our description is exactly the same as that of complexes $\mathcal{E}^{m,n}_H$ and $\mathcal{E}_{\pi}^{m, n}$ introduced in \cite[Section 2.1]{aro_tur13}, except that our graphs have their external vertices "decorated" or "colored" with $\{1, \cdots, r\}$ as the set of colours. As before the colours correspond to the components of the links.


Let us start with $\empi$, which is a differential graded vector space  spanned by some connected {\it hairy colored graphs} defined below, modulo some relations.  A \textit{hairy graph} $G$ is a triple $(V_{G}^E,V_{G}^I, E_{G})$ where 
\begin{enumerate}
\item[-] $V_{G}^E$ is a finite non-empty set whose elements are non-labeled {\it external} vertices of valence $1$; 
\item[-] $V_{G}^I$ is a finite set (which can be empty) whose elements are non-labeled {\it internal} vertices of valence $\geq 3$;
\item[-]  $E_{G}$ is the set of oriented edges, which may include tadpoles and multiple edges. 
\end{enumerate}

We require that the graphs are connected. A \textit{hairy colored graph} is a couple $(G, f_{G})$ where $f_{G} \colon V_{G}^E \lra \{1, \cdots, r\}$ is a map from the set of external vertices to the set of colours. 
 Given a colored graph $G$, an \textit{orientation set} is  the union  $ V_{G}^E \cup V_{G}^I \cup E_{G}$. 
  Elements of $V_G^I$ are assigned degree $-d$; elements of $V_{G}^E$ of color $i$ are assigned degree $-m_i$; elements of $E_G$ are assigned degree $d-1$.  We will say that a colored graph is \textit{oriented} if it is equipped with a total order $O_G$ of its orientation set. Two graphs $(G, f_G, O_G)$ and $(G', f_{G'}, O_{G'})$ are equivalent if there are two bijections 
$$ g \colon V_G^E \cup V_G^I=V_G \stackrel{\cong}{\lra} V_{G'} = V_{G'}^E \cup V_{G'}^I \qquad \mbox{and} \qquad h \colon E_G \stackrel{\cong}{\lra} E_{G'},$$ 
respecting adjacency, orientation of edges,  order of orientation sets, and coloration. (Since $g$ preserves the valence,  it must send external vertices to external and internal to internal.)
 Let $\mg$ be the collection of all equivalence classes of oriented colored graphs, and let $\empi$ denote the vector space  spanned by $\mg$ modulo  two relations in $\qbb [\mg]$: 

\begin{enumerate}
\item[-] $(G, f_G, O_G) = (-1)^d(G', f_{G'}, O_{G'})$ if $(G, f_G, O_G)$ differs from $(G', f_{G'}, O_{G'})$ only by an orientation of an edge; 
\item[-] $(G, f_G, O_G) = \pm (G', f_{G'}, O_{G'})$ if $(G', f_{G'}, O_{G'})$ is obtained from $(G, f_G, O_G)$ only by a permutation of the orientation set. The sign "$\pm$" is the Koszul sign of permutation taking into account the degrees of  elements.  
\end{enumerate}
 Given a colored hairy graph $(G,f_G)$, we define its degree as a sum of degrees of the elements in its orientation set:
\[ \mbox{degree}(G) = (d-1) |E_G|-d |V_G^I| - \sum_{i=1}^r m_i |f_G^{-1}(i)|.\]
This endows $\empi$ with a grading. It is also equipped with a differential $\partial$ that turns it into a chain complex. The differential of a graph is defined as the sum of expansion of its internal vertices.  More precisely, let $(G, f_G, O_G)$
be an oriented colored graph. For any internal vertex  $x \in V_G^I$ of valence $l$, let $(G_{px}, f_{G_{px}}, O_{G_{px}}), 1 \leq p \leq n_x$, denote the  graphs obtained from the expansion of $x$. One can see that the number of such graphs is 
$n_x = 2^{l-1}-l-1$. They are defined as follows. Notice first that an expansion produces a new vertex and a new edge. For $1 \leq p \leq n_x$, one has
\[G_{px} = (V^E_{G_{px}}, V^I_{G_{px}}, E_{G_{px}}) = (V^E_G, V^I_G \cup \{v_{px}\}, E_G \cup \{e_{px}\}), \quad f_{G_{px}} = f_G,\]
and the orientation set has two more elements: $v_{px}$ -- the new vertex, and $e_{px}$ -- the new edge (oriented from the old vertex to the new one). Its order is defined as follows: $v_{px} \leq e_{px}$ and for all $y$ in the orientation set of $G$, one has $e_{px} \leq y$ (here "$\leq$" denotes the total order). 

 The differential is then defined by 
\[\partial (G, f_G, O_G) = \sum_{x \in V_G^I} \sum_{p=1}^{n_x} (G_{px}, f_{G_{px}}, O_{G_{px}}).\]

\begin{figure}[!ht]
\centering
\includegraphics[scale=0.6]{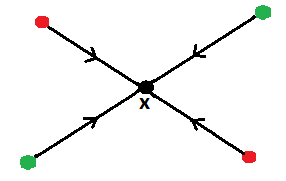}
\caption{} \label{graph_diff}
\end{figure}

Note that there is no sign in each term of the differential. It is hidden in the way we order the orientation set and orient the new edge. Let us see one example. Assume that $r=2$, that is, we have $2$ colours, and consider  the graph in Figure~\ref{graph_diff}. The three graphs that appear in the differential of Figure~\ref{graph_diff}  are Figure~\ref{graph1_diff}, Figure~\ref{graph2_diff} and Figure~\ref{graph3_diff}.

\begin{figure}[!ht]
\centering
\begin{minipage}[t]{3cm}
\includegraphics[scale=0.6]{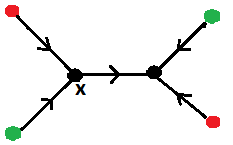}
\caption{} \label{graph1_diff}
\end{minipage}
\begin{minipage}[t]{1.5cm}
\includegraphics[scale=0.1]{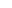}
\end{minipage}
\begin{minipage}[t]{3cm}
\includegraphics[scale=0.6]{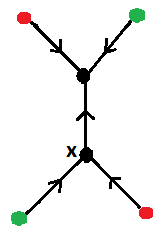}
\caption{} \label{graph2_diff}
\end{minipage}
\begin{minipage}[t]{1.5cm}
\includegraphics[scale=0.1]{blanc}
\end{minipage}
\begin{minipage}[t]{3cm}
\includegraphics[scale=0.6]{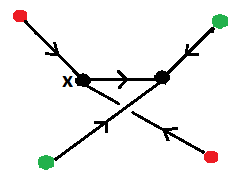}
\caption{} \label{graph3_diff}
\end{minipage}

\end{figure}

To see that for any graph $G\in \empi$, one has $\partial^2 G=0$, we notice that $\partial^2G$ is the sum of graphs obtained from
$G$ by inserting two new edges (if we contract these two edges, one should get $G$). Each such summand appears in $\partial^2G$ 
twice as it matters the order in which the two new edges are created. From the definition of $\partial$  these two repeated summands appear with opposite signs, because in the orientation set
\[
(v_{p_1x_1}e_{p_1x_1})(v_{p_2x_2}e_{p_2x_2})= - (v_{p_2x_2}e_{p_2x_2})(v_{p_1x_1}e_{p_1x_1})
\]
as the degree of $(v_{p_ix_i}e_{p_ix_i})$, $i=1,2$, is $-d+(d-1)=-1$.

Now we define $\emh$ as the cofree cocommutative coalgebra cogenerated by $\empi$. We can view $\emh$ as a vector space spanned by all possibly empty or disconnected colored hairy graphs (with each connected component from $\empi$).

\subsection{The rational homotopy of $\emb$} \label{proof_rational_homotopy_thm-section}

The goal of this subsection is to prove Theorem~\ref{rational_homotopy_thm}, which determines the rational homotopy of the space $\emb$.  The plan of the proof is as follows.  We will first prove 
 that the graph-complex $\emh$ is quasi-isomorphic to the complex $\underset{\Omega}{\mbox{hRmod}}(\widetilde{H}_*((\vee_{i=1}^r S^{m_i})^{\wedge \bullet}, \qbb), \widehat{H}_*({\mathcal B}_d(\bullet), \qbb))$ (where 
  ${\mathcal B}_d$ denotes the little $d$-disks operad). The latter complex is the derived mapping space. Since both the source and target have trivial differential, the complex itself is equivalent to a product of Ext groups that can be either computed by taking the projective resolution of the source 
(as in Section~\ref{generating_function_homology})
or by taking the injective resolution of the target.  We show that the second approach produces exactly the complex $\emh$.      
  A similar argument shows that $\empi$ is equivalent to the derived mapping space $\underset{\Omega}{\mbox{hRmod}}(\widetilde{H}_*((\vee_{i=1}^r S^{m_i})^{\wedge \bullet}, \qbb), \qbb\otimes\widehat{\pi}_*{\mathcal B}_d(\bullet))$. On the other hand,
 since $\emh$ computes $H_*(\emb,\qbb)$, by dimensional reasons we conclude that $\empi$ computes $\qbb\otimes \pi_*
 \emb$. Which finishes the proof of Theorem~\ref{rational_homotopy_thm}.

In \cite{aro_tur13} Arone and the second author explicitly describe a differential graded right  $\Omega$-modules $\dhat^{\bullet}$ that turns out to be an injective resolution of  $\widehat{H}_*({\mathcal B}_d(\bullet), \qbb)$.  We will first recall the right $\Gamma$-module $D_d^{\bullet}$, generated by  graphs, introduced in \cite{turchin10}. For $k \geq 0$, $D_d^k$ is a vector space  spanned by possibly disconnected graphs with $k$ {\it external} vertices labeled by $\{1,\ldots,k\}$ of any valence $\geq 0$ and some possibly empty set of non-labeled internal vertices of valence $\geq 3$, such that  each their connected component  has at least one external vertex.
 Tadpoles and multiple edges are allowed. The graphs are oriented, and an orientation set of such graph consists of the union of edges (considered as elements of degree $d-1$) and internal vertices (considered as elements of degree $-d$). Orienting a graph means putting a total order on its orientation set. As in Section~\ref{graphs_complexes_emh-empi}, there is a similar notion of equivalence of graphs, and similar sign relations. Again as in Section~\ref{graphs_complexes_emh-empi}, the differential is the sum of expansions of vertices. The expansion is applied to both external and internal vertices. The expansion of an internal vertex is as in Section~\ref{graphs_complexes_emh-empi}, while the expansion of an external vertex produces an external vertex (with the same label) and an internal one. 

It is well known that the sequence $D^{\bullet}_d = \{D_d^k\}_{k \geq 0}$ is an operad (Figure~\ref{insertion_operation_figure} shows an example of the composition) in differential graded cocommutative coalgebras (Figure~\ref{coproduct_figure} is an example of the coproduct in $D_d^3$).

\begin{figure}[!ht]
\centering
\includegraphics[scale=0.4]{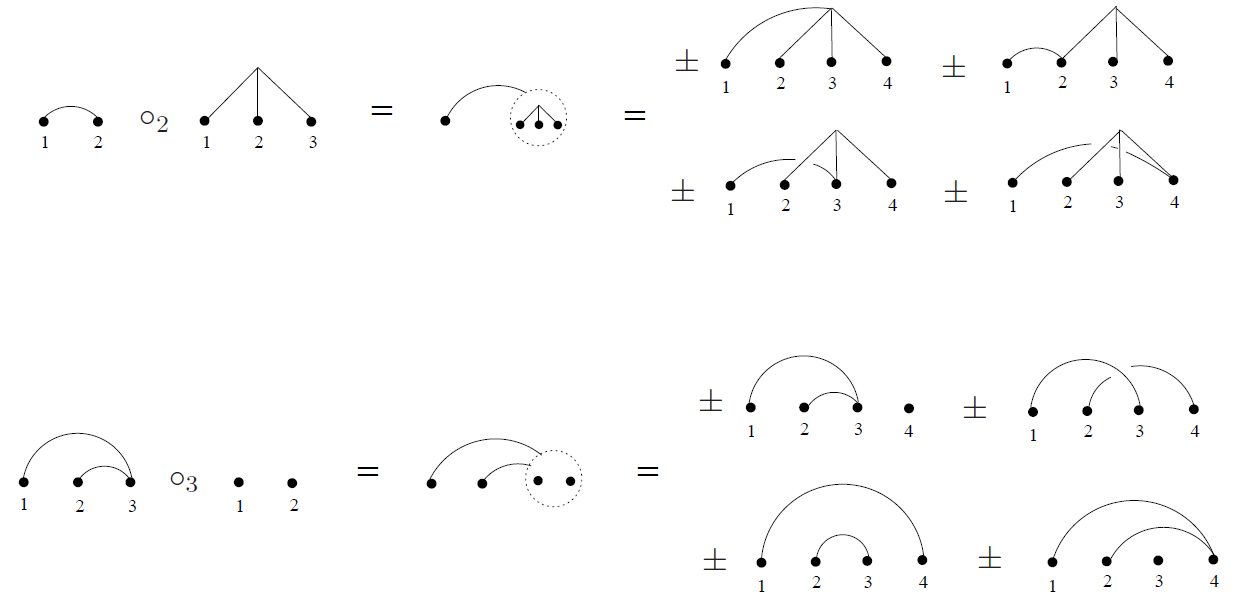}
\caption{Examples of the insertion operation} \label{insertion_operation_figure}
\end{figure}

\begin{figure}[!ht]
\centering
\includegraphics[scale=0.4]{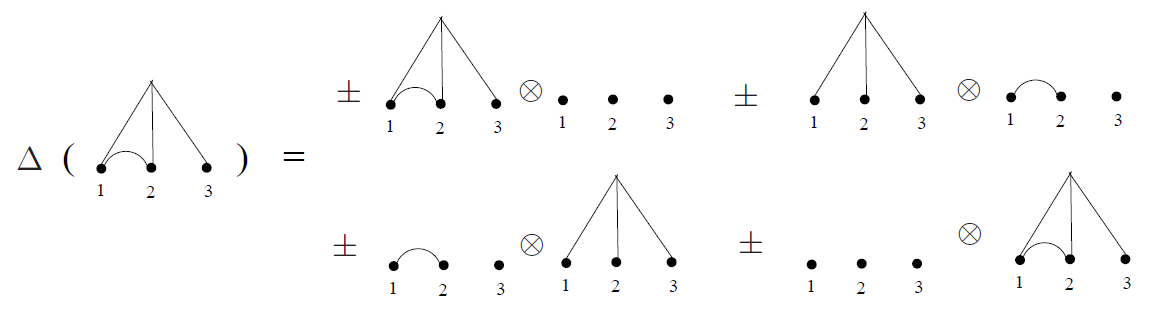}
\caption{An example of the coproduct in $D^3_d$} \label{coproduct_figure}
\end{figure}

It is also well known \cite{lam_vol, turchin10} that there is an inclusion of operads (in coalgebras) $H_*({\mathcal B}_d(\bullet), \qbb) \hookrightarrow D_d^{\bullet}$ that turns out to be a quasi-isomorphism. This inclusion implies that $D_d^{\bullet}$ is an infinitesimal bimodule over $H_*({\mathcal B}_d(\bullet), \qbb)$, and therefore over $H_0({\mathcal B}_d(\bullet), \qbb) = {{\mathcal C}om}$. By \cite[Proposition 4.9]{aro_tur12}, we deduce that $D_d^{\bullet}$ is a right $\Gamma$-module. Moreover, since the right $\Gamma$-module structure respects the coalgebra structure, $D^{\bullet}_d$ is a right $\Gamma$-module in coalgebras. Recalling the cross effect functor $cr$ from (\ref{cross_effect_functor}), one defines the right $\Omega$-module $\dhat^{\bullet}$ by 
\[ \dhat^{\bullet} = cr (D_d^{\bullet}).\]
It is easy to see (by the definition of the cross effect) that for each $k \geq 0$, $\dhat^k$ is spanned by the same generators as $D_d^k$, except that generators of $\dhat^k$ are graphs whose external vertices are of valence $\geq 1$. 
The following result is~\cite[Proposition 2.6]{aro_tur13}.

\begin{prop}\label{injective_resolution_prop} 
The right $\Omega$-module $\dhat^{\bullet}$ is finite dimensional and injective in each homological degree. 
The inclusion $\widehat{H}_*({\mathcal B}_d(\bullet), \qbb) \hookrightarrow \dhat^{\bullet}$ is a quasi-isomorphism of $\Omega$-modules
and thus is an injective resolution for every given complexity.
\end{prop}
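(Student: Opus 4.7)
The plan is to establish three claims separately: (i) $\dhat^\bullet$ is finite-dimensional in each homological degree, (ii) $\dhat^\bullet$ is degreewise injective as a right $\Omega$-module, and (iii) the natural inclusion $\widehat{H}_*({\mathcal B}_d(\bullet), \qbb) \hookrightarrow \dhat^\bullet$ is a quasi-isomorphism.

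For (i), fix an arity $k$ and a homological degree $N$. A generator of $\dhat^k$ in degree $N$ has $I$ internal vertices (valence $\geq 3$), $E$ edges, and $k$ external vertices of valence $\geq 1$, subject to $(d-1)E - dI = N$. Summing valences yields $2E \geq k + 3I$. Combined with $E = (N + dI)/(d-1)$, this gives $(d-3)I \leq 2N - (d-1)k$, which bounds $I$, hence $E$, in the relevant range $d \geq 4$. Only finitely many isomorphism classes of graphs with these bounded invariants exist.

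For (ii), first identify the injective cogenerator of right $\Omega$-modules at level $p$: dually to the representable projective $\qbb[\mathrm{hom}_\Omega(-,p)]$, we have $I_p(n) := \qbb^{\mathrm{hom}_\Omega(p,n)}$ with structure map $\tau^* \xi(g) := \xi(\tau \circ g)$ for $\tau \colon m \twoheadrightarrow n$. Over $\qbb$ this is canonically isomorphic as an $\Omega$-module to $\qbb[\mathrm{hom}_\Omega(p,-)]$ equipped with action $\tau^* \ell = \sum_{g \colon p \twoheadrightarrow m,\ \tau \circ g = \ell} g$. Next, decompose $\dhat^\bullet$ as a direct sum over isomorphism classes of \emph{skeletons} $[\sigma]$, where a skeleton records the internal vertices, the internal edges among them, and the number of hairs incident at each internal vertex. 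For $\sigma$ with $p = p(\sigma)$ total hair endpoints, the contribution $\dhat^\bullet[\sigma]$ at arity $n$ is spanned by surjective labelings of the $p$ endpoints by $\{1, \ldots, n\}$ (surjectivity being the valence-$\geq 1$ condition), taken modulo the possibly sign-twisted action of $\mathrm{Aut}(\sigma)$ on hair endpoints. Unpacking the cross-effect of the $\mathrm{Com}$-infinitesimal-bimodule structure on $D_d^\bullet$ shows that the induced $\Omega$-action on $\dhat^\bullet[\sigma]$ matches exactly the $\mathrm{Aut}(\sigma)$-coinvariants of $I_p$. Over $\qbb$ these coinvariants are a direct summand of $I_p$ via the averaging idempotent, hence injective; the finite sum over the skeletons contributing in each homological degree (finite by (i)) is then also injective. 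I expect the identification of the $\Omega$-action on $\dhat^\bullet[\sigma]$ with the cogenerator action to be the main technical step, requiring careful bookkeeping of how the $\mathrm{Com}$-infinitesimal-bimodule operations redistribute hairs across external labels.

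For (iii), Kontsevich's formality of the little discs operad, in its graph-operad formulation \cite{lam_vol, turchin10}, provides a quasi-isomorphism $H_*({\mathcal B}_d(\bullet), \qbb) \hookrightarrow D_d^\bullet$ of dg operads, and in particular of dg right $\Gamma$-modules. Since the cross-effect functor $cr$ is an equivalence of abelian categories (hence exact) and is realized by a natural direct-sum splitting $F(n_+) \cong \bigoplus_{S \subseteq \{1,\ldots,n\}} cr(F)(S)$ respecting differentials, it preserves quasi-isomorphisms of chain complexes. Applying $cr$ to the Kontsevich inclusion yields the desired quasi-isomorphism $\widehat{H}_*({\mathcal B}_d(\bullet), \qbb) \hookrightarrow \dhat^\bullet$. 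Together with (i) and (ii), this exhibits $\dhat^\bullet$ as a degreewise finite-dimensional injective resolution of $\widehat{H}_*({\mathcal B}_d(\bullet), \qbb)$ in each complexity, as claimed.
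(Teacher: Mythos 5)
The paper does not prove this statement: it is cited verbatim as \cite[Proposition~2.6]{aro_tur13}, so there is no internal argument to compare against. Your reconstruction follows what is, to the best of my knowledge, the approach of that reference (decomposition into cofree/coinduced pieces indexed by graph skeletons, plus the known quasi-isomorphism $H_*({\mathcal B}_d)\hookrightarrow D_d^\bullet$), and the core steps are sound. A few points to tighten. In (i) and in the finiteness-of-skeletons step of (ii), your bound $(d-3)I\le 2N-(d-1)k$ (and, for skeletons, $2N\ge (d-3)I+(d-1)p$ where $p$ is the number of hair-ends) only controls $I$, $E$, $p$ when $d\ge 4$; this is the regime the paper works in, since $d>2\max m_i+1\ge 3$, but you should say so explicitly, because the statement as phrased is not sensitive to this. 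In (ii), your notion of skeleton ("internal vertices, internal edges among them, and the number of hairs per internal vertex") omits the pieces of $\dhat^k$ consisting of edges joining two external vertices and tadpoles on an external vertex; these contribute skeleton components that are a free segment with two dangling hair-ends and a loop with two dangling hair-ends (possibly identified), and must be included for the skeleton decomposition to exhaust $\dhat^\bullet$. Once they are, the identification of the arity-$n$ piece with $\mathrm{Aut}(\sigma)$-coinvariants of $\qbb[\mathrm{Surj}(p,n)]$, with the $\Omega$-action by lifts along $\tau$, is correct, and over $\qbb$ the averaging idempotent makes each such piece a summand of the injective $I_p$; together with the per-degree finiteness this gives degreewise injectivity. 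In (iii), attributing the quasi-isomorphism $H_*({\mathcal B}_d(\bullet),\qbb)\hookrightarrow D_d^\bullet$ to "Kontsevich's formality" is a little loose — it is a separate graph-complex homology computation, proved in \cite{lam_vol,turchin10} — but the citation and the use of exactness of the Pirashvili cross-effect to transport the quasi-isomorphism to $\Omega$-modules are both correct. You do not explicitly address the final clause about complexity, but since the degree and the complexity of a skeleton are both determined by $(E,I)$ and your bounds make the complex finite in each complexity, this is immediate.
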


\begin{rmq}\label{rm:complexity_homol}
To recall the decomposition by complexity $t$ in $\widehat{H}_*({\mathcal B}_d(\bullet), \qbb)$ is 
$
\bigoplus_t \widehat{H}_{t(d-1)}({\mathcal B}_d(\bullet), \qbb).
$
In $\dhat^\bullet$ one defines a similar splitting saying that a complexity of a graph $G\in\dhat^k$ is the first Betti number
of the graph obtained from $G$ by gluing together all external vertices.
\end{rmq}

\begin{thm} \label{emh_hrmod_thm}
For $\codim$ the graph-complex $\emh$ is quasi-isomorphic to $\rmodthm(\widetilde{H}_*((\vee_{i=1}^r S^{m_i})^{\wedge \bullet}, \qbb), \widehat{H}_*({\mathcal B}_d(\bullet), \qbb))$, that is, 
\begin{eqnarray} \label{emh_hrmod_formula}
 \emh \simeq \rmodthm(\widetilde{H}_*((\vee_{i=1}^r S^{m_i})^{\wedge \bullet}, \qbb), \widehat{H}_*({\mathcal B}_d(\bullet), \qbb)).
\end{eqnarray}
\end{thm}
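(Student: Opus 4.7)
The plan is to compute the derived mapping space on the right-hand side of \eqref{emh_hrmod_formula} by replacing the target with its injective resolution, invoking Proposition~\ref{injective_resolution_prop}. Since the inclusion $\widehat{H}_*({\mathcal B}_d(\bullet), \qbb) \hookrightarrow \dhat^\bullet$ is a quasi-isomorphism into injectives (in each complexity), and since $\emh$ itself splits by complexity via Remark~\ref{rm:complexity_homol}, it suffices to establish a natural isomorphism of chain complexes
$$ \underset{\Omega}{\mathrm{Rmod}}\bigl(\widetilde{H}_*((\vee_{i=1}^r S^{m_i})^{\wedge \bullet}, \qbb),\, \dhat^\bullet\bigr) \;\cong\; \emh. $$
Combining this with the Hodge splitting \eqref{splitting_htilde}, the task reduces to identifying, for each tuple $(s_1, \ldots, s_r)$, the mapping space $\underset{\Omega}{\mathrm{Rmod}}(Q^{m_1\cdots m_r}_{s_1\cdots s_r}, \dhat^\bullet)$ with the subspace of $\emh$ spanned by graphs with $s_i$ external vertices of colour $i$.

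Since $Q^{m_1\cdots m_r}_{s_1\cdots s_r}$ is concentrated in arity $k = s_1+\cdots+s_r$, a morphism to $\dhat^\bullet$ amounts to a $\Sigma_k$-equivariant linear map $Q^{m_1\cdots m_r}_{s_1\cdots s_r}(k) \to \dhat^k$ whose image is annihilated by every non-bijective surjection $A \twoheadrightarrow \{1,\ldots,k\}$ (coming from the vanishing of $Q^{m_1\cdots m_r}_{s_1\cdots s_r}$ outside arity $k$). Applying Frobenius reciprocity to the induction in \eqref{qs1sr}, the datum of such a map becomes an element of $\dhat^k$ of the appropriate degree that is equivariant, with the sphere-orientation signs, under $\Sigma_{s_1}\times\cdots\times\Sigma_{s_r}$. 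Geometrically, partitioning the external vertex labels into blocks of sizes $s_1, \ldots, s_r$ and assigning colours $1, \ldots, r$ accordingly, this is exactly a colored hairy graph with $s_i$ unlabeled external vertices of colour $i$.

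The crucial point is that the annihilation condition under non-bijective surjections precisely selects those graphs whose external vertices have valence one. A non-bijective surjection $f\colon A \twoheadrightarrow \{1,\ldots,k\}$ with $|f^{-1}(j)| = n \geq 2$ acts on $\dhat^k$ by duplicating the external vertex $j$ into $n$ copies, summing over all $n^v$ ways of redistributing its $v$ incident edges among the copies. When $v = 1$, every term has at least one valence-$0$ external vertex and hence vanishes in $\dhat^{k+n-1}$ by construction of the cross effect, so the whole sum is zero. When $v \geq 2$, already the case $n = 2$ yields surviving configurations (e.g.\ the two distributions $(1,v-1)$ and $(v-1,1)$ of the edges between the duplicates) which do not cancel, forcing the external vertex to have valence exactly one. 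A direct degree count, summing $-m_i$ per external vertex of colour~$i$, $-d$ per internal vertex, and $d-1$ per edge, matches the grading of the mapping space once the degree $\sum s_i m_i$ of the generator of $Q^{m_1\cdots m_r}_{s_1\cdots s_r}(k)$ is taken into account.

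It remains to match the differentials. The differential on $\dhat^\bullet$ is a sum of expansions of all vertices, but the expansion of a valence-$1$ external vertex is identically zero, since the new internal vertex would need valence $\geq 3$, incompatible with a single incident edge. Thus on the valence-$1$ subspace only internal vertex expansions survive, which is precisely the differential of $\emh$. Disconnected graphs in $\dhat^\bullet$ whose components each carry at least one external vertex correspond to the cofree cocommutative coalgebra structure $\emh = S^c(\empi)$. I expect the main obstacle to lie in carefully tracking signs through the Frobenius reciprocity step: matching the signs from permuting the suspension coordinates of $\widetilde{H}_*(S^{\sum s_i m_i})$ with the Koszul signs attached to permuting unlabeled colour-$i$ external vertices in the orientation conventions of Section~\ref{graphs_complexes_emh-empi}.
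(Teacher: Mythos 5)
Your approach is essentially the same as the paper's: replace $\widehat H_*({\mathcal B}_d(\bullet),\qbb)$ by the injective resolution $\dhat^\bullet$ via Proposition~\ref{injective_resolution_prop}, use Frobenius reciprocity over the Hodge splitting, and identify the result with colored hairy graphs whose external vertices have valence one. The paper outsources the key identification
\[
\underset{\Omega}{\mathrm{Rmod}}\bigl(\widetilde{H}_*((\vee_{i}S^{m_i})^{\wedge\bullet},\qbb),\dhat^\bullet\bigr)\;\cong\;\bigoplus_k\mathrm{hom}_{\Sigma_k}\bigl(\widetilde H_*((\vee_iS^{m_i})^{\wedge k},\qbb),M(D_d^k)\bigr)
\]
to Lemmas~2.7 and~2.8 of~\cite{aro_tur13}, whereas you attempt to re-derive it on the spot. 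Your sketch has the right shape, but the converse direction (valence $\geq 2$ at some external vertex forces a nonzero $f^*$) is asserted, not proved: you argue that terms from splitting a single valence-$v$ graph ``do not cancel,'' but $\phi_k$ maps to a linear combination of graphs, and you have not ruled out cancellations between the $f^*$-images of different summands with different valence profiles. This is exactly the content that the cited lemmas supply; one would fix it, for instance, by filtering $\dhat^k$ by the maximal external valence and observing the induced map on the associated graded is injective. The degree bookkeeping you give is correct, and the sign issue you flag at the end (matching signs from $\widetilde H_*(S^{\sum s_im_i})$ against the orientation-set conventions) is indeed the remaining technical point, which the paper also does not spell out beyond the reference.
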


\begin{proof}
The steps of the proof are the same as those of the proof of Theorem 2.2 from \cite{aro_tur13}. Thus, for more details, we refer the reader to \cite{aro_tur13}. For each $k \geq 0$, let $M(D_d^k)$ denote the subspace of $D_d^k$ generated by graphs whose external vertices are of valence $=1$. This space is endowed with a differential being viewed  as a quotient-complex of $D_d^k$.  We have the following quasi-isomorphisms
\begin{align*}
& \rmod(\widetilde{H}_*((\vee_{i=1}^r S^{m_i})^{\wedge \bullet}, \qbb), \widehat{H}_*({\mathcal B}_d(\bullet), \qbb))   \\ 
 & \simeq  \ \underset{\Omega}{\mbox{Rmod}}(\widetilde{H}_*((\vee_{i=1}^r S^{m_i})^{\wedge \bullet}, \qbb), \dhat^{\bullet}) \ \mbox{by} \ \mbox{Proposition}~\ref{injective_resolution_prop} \\
 & \simeq \bigoplus_{k=0}^{\infty} \mbox{hom}_{\Sigma_k} \left(\widetilde{H}_*((\vee_{i=1}^r S^{m_i})^{\wedge k}, \qbb),   M(D_d^k)\right)   \ \mbox{by} \ \cite[\mbox{Lemma 2.7 and Lemma 2.8}]{aro_tur13}  \\
& \simeq  \bigoplus_{k=0}^{\infty} \mbox{hom}_{\Sigma_k} \left( \bigoplus_{s_1+\cdots+s_r=k} \mbox{Ind}^{\Sigma_k}_{\Sigma_{s_1} \times \cdots \times \Sigma_{s_r}}\widetilde{H}_* ( S^{s_1m_1+\cdots + s_rm_r},\qbb), M(D_d^k)\right) \ \mbox{by} \ (\ref{homology_wedge}) \\
& \simeq \bigoplus_{k=0}^{\infty} \bigoplus_{s_1+\cdots +s_r =k} \mbox{hom}_{\Sigma_{s_1}\times\ldots\times\Sigma_{s_r}} \left(\widetilde{H}_* ( S^{s_1m_1+\cdots + s_rm_r}, \qbb), M(D_d^k) \right). 
\end{align*}

By the description of the graph-complex $\emh$ from Section~\ref{graphs_complexes_emh-empi}, it is not difficult to see that there is an isomorphism of chain complexes
\begin{equation}\label{eq:emh_M(D)}
 \emh \cong\bigoplus_{k=0}^{\infty} \bigoplus_{s_1+\cdots +s_r =k} \mbox{hom}_{\Sigma_{s_1}\times\ldots\times\Sigma_{s_r}} \left(\widetilde{H}_* ( S^{s_1m_1+\cdots + s_rm_r}, \qbb), M(D_d^k) \right),
 \end{equation}
 which completes the proof. 
 \end{proof}
 
 One should mention that the decomposition over $(s_1,\ldots,s_r)$ above appeared exactly as the multidegree Hodge splitting. 
 
 \begin{rmq}\label{rmq:finite_dim}
 A careful reader might prefer to see a product instead of the direct sum used in the above equations. But it is easy to show that $\emh$ is finite dimensional in every homological degree, for which reason it would not matter whether we take a product or a direct sum of its components, see \cite[Remark 13.3]{aro_tur12}  and  \cite[Section 5]{aro_tur13} for degree estimates in  similar graph-complexes. 
\end{rmq}

The following corollary is an immediate consequence of Theorem~\ref{rational_homology_thm} and Theorem~\ref{emh_hrmod_thm}. 

\begin{coro} \label{emh_emb_coro}
For $\codim$ the graph-complex $\emh$ computes the rational homology of the space $\embthm$, that is, 
\begin{eqnarray} \label{emh_emb_formula}
H(\emh) \cong H_*(\embthm, \qbb). 
\end{eqnarray}
\end{coro}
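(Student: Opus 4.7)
The plan is to derive the corollary as an immediate combination of the two main inputs already established in the paper. First I would observe that the configuration space $C(k,\rdbb)$ and the little $d$-disks operad ${\mathcal B}_d(k)$ have the same rational homology (both give the graded Poisson algebras operad, as recalled in Section~\ref{s:intro}), so that the $\Omega$-modules $\widehat{H}_*(C(\bullet,\rdbb),\qbb)$ and $\widehat{H}_*({\mathcal B}_d(\bullet),\qbb)$ are canonically identified.

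Given this identification, I would read off the chain
\[
H_*(\embthm,\qbb) \;\cong\; H\!\left(\rmodthm(\widetilde{H}_*((\vee_{i=1}^r S^{m_i})^{\wedge\bullet},\qbb),\,\widehat{H}_*(C(\bullet,\rdbb),\qbb))\right)
\]
from Theorem~\ref{rational_homology_thm}, and then rewrite the right-hand side using the identification above as
\[
H\!\left(\rmodthm(\widetilde{H}_*((\vee_{i=1}^r S^{m_i})^{\wedge\bullet},\qbb),\,\widehat{H}_*({\mathcal B}_d(\bullet),\qbb))\right).
\]
By Theorem~\ref{emh_hrmod_thm}, the graph-complex $\emh$ is quasi-isomorphic to the very complex appearing inside the outer $H(-)$ on the right. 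Taking homology of both sides of that quasi-isomorphism yields $H(\emh)\cong H_*(\embthm,\qbb)$, which is exactly the content of the corollary.

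The hypothesis $\codim$ in the corollary is exactly the codimension condition required by Theorem~\ref{rational_homology_thm}; Theorem~\ref{emh_hrmod_thm} uses the same range, so no additional constraint needs to be introduced. There is no genuine obstacle here: the corollary is a purely formal consequence of stringing together the homology computation (which did the work of invoking the Goodwillie--Weiss tower, relative formality, and the formality of $\widetilde{C}_*((\vee_i S^{m_i})^{\wedge\bullet})$ as a right $\Omega$-module) with the explicit graph-complex model $\emh$ for the derived $\Omega$-module mapping space (which did the work of producing an injective resolution $\dhat^\bullet$ of $\widehat{H}_*({\mathcal B}_d(\bullet),\qbb)$ and cutting down to valence-one external vertices). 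The only point to be a bit careful about is the interpretation on homology: the quasi-isomorphism of Theorem~\ref{emh_hrmod_thm} gives $H(\emh)$ on one side and the homology of the derived mapping complex on the other, which is precisely the object computed by Theorem~\ref{rational_homology_thm}, so the two isomorphisms compose without loss of information.
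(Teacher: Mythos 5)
Your proof is correct and is essentially the paper's own argument: the corollary is an immediate combination of Theorem~\ref{rational_homology_thm} and Theorem~\ref{emh_hrmod_thm}, with the routine identification $H_*(C(\bullet,\rdbb),\qbb) = H_*({\mathcal B}_d(\bullet),\qbb)$ (already noted in the introduction) allowing the two mapping complexes to be compared. The paper simply states the conclusion as immediate; you have spelled out the same chain of substitutions in more detail, which is fine but not a different route.
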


\begin{coro} \label{empi_emb_coro}
For $\codim$ the graph-complex $\empi$ computes the rational homotopy of the space $\embthm$, that is, 
\begin{eqnarray} \label{empi_emb_formula}
H(\empi) \cong \qbb \otimes \pi_*(\embthm). 
\end{eqnarray}
\end{coro}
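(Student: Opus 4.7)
The plan is to identify both $H(\empi)$ and $\qbb\otimes\pi_*(\emb)$ with the spaces of primitives of coalgebras that have already been matched by Corollary~\ref{emh_emb_coro}. First I would unpack the cogenerator side: by the very construction of $\emh$ in Subsection~\ref{graphs_complexes_emh-empi} as the cofree cocommutative coalgebra cogenerated by $\empi$, the canonical inclusion $\empi\hookrightarrow\emh$ identifies $\empi$ with the subspace of primitives; after taking homology, $H(\empi)$ becomes the primitive part of the cofree cocommutative coalgebra $H(\emh)$.

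Second, I would invoke the H-space structure on $\emb$ recalled in the introduction: choosing $\iota$ with all components parallel to a common line, concatenation along that line makes $\emb$ into an H-space, and the hypothesis $\codim$ ensures that $\emb$ is path-connected. Consequently $H_*(\emb,\qbb)$ is a connected graded-commutative Hopf algebra, and the Milnor--Moore theorem (dualized) presents it as the cofree cocommutative coalgebra on its primitives, which coincide with the Hurewicz image of $\qbb\otimes\pi_*(\emb)$.

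The third step matches these two descriptions. Granted that the quasi-isomorphism $H(\emh)\cong H_*(\emb,\qbb)$ of Corollary~\ref{emh_emb_coro} respects coalgebra structures, passing to primitives immediately yields $H(\empi)\cong\qbb\otimes\pi_*(\emb)$, as required.

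I expect the main obstacle to lie in verifying this coalgebra compatibility. One must revisit the chain of quasi-isomorphisms of Theorem~\ref{rational_homology_thm2}, Corollary~\ref{rational_homology_coro} and Theorem~\ref{emh_hrmod_thm} while tracking coalgebra structures throughout: on the graph side they come from the cooperad structure on $\dhat^\bullet$, implemented as disjoint union of graphs; on the embedding side from the Alexander--Whitney diagonal on $C_*(\emb,\qbb)$ that is compatible with the H-space product. An equivalent route, which parallels the proof of Theorem~\ref{emh_hrmod_thm} more closely, is to establish the analogue of Proposition~\ref{injective_resolution_prop} with $\qbb\otimes\widehat{\pi}_*{\mathcal B}_d(\bullet)$ in place of $\widehat{H}_*({\mathcal B}_d(\bullet),\qbb)$ and the sub-$\Omega$-module $\phat^\bullet\subset\dhat^\bullet$ of connected graphs as an injective resolution, then rerun the chain of equivalences of Theorem~\ref{emh_hrmod_thm} to obtain $\empi$ in place of $\emh$; a finite-dimensional degree count in each Hodge--complexity summand of~\eqref{final_splitting_H}--\eqref{final_splitting_pi} then reduces the statement to Corollary~\ref{emh_emb_coro}.
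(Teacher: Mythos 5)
Your proposal takes essentially the same route as the paper: combine Corollary~\ref{emh_emb_coro} with the two facts that $H_*(\emb,\qbb)$ is a cofree cocommutative coalgebra cogenerated by $\qbb\otimes\pi_*(\emb)$ (via the H-space structure) and that $\empi$ is precisely the subcomplex of primitives of $\emh$. The coalgebra-compatibility concern you raise as the ``main obstacle'' is in fact unnecessary to resolve head-on: since both coalgebras are connected, cofree cocommutative, and of finite type in each degree (Remark~\ref{rmq:finite_dim}), an isomorphism of the underlying graded vector spaces already determines the graded dimensions of the cogenerating spaces, which is exactly the dimension-count shortcut you allude to at the end and which the paper invokes as ``dimensional reasons.''
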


\begin{proof}
The desired result is a combination of Corollary~\ref{emh_emb_coro} and the following two facts:
\begin{enumerate}
\item[-] the rational homology of the space $\emb$ is a graded polynomial coalgebra cogenerated by the rational homotopy of the same space;
\item[-] the subcomplex of $\emh$, formed by primitive elements, is exactly the complex $\empi$. 
\end{enumerate}
These two facts imply that (\ref{emh_emb_formula}) and (\ref{empi_emb_formula}) are equivalent. 
\end{proof}

\begin{thm} \label{empi_hrmod_thm}
For $\codim$ the graph-complex $\empi$ is quasi-isomorphic to $\rmodthm(\widetilde{H}_*((\vee_{i=1}^r S^{m_i})^{\wedge \bullet}, \qbb), \qbb \otimes \widehat{\pi}_*{\mathcal B}_d(\bullet))$.
\end{thm}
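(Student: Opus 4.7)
The plan is to mirror the proof of Theorem~\ref{emh_hrmod_thm}, replacing the injective resolution $\dhat^\bullet$ of $\widehat{H}_*({\mathcal B}_d(\bullet),\qbb)$ by an injective resolution of its primitive piece $\qbb\otimes\widehat{\pi}_*{\mathcal B}_d(\bullet)$. Define $\phat^\bullet\subset\dhat^\bullet$ to be the right $\Omega$-submodule spanned by those generating graphs that are \emph{connected}. Equivalently, $\phat^\bullet$ is the subspace of primitive elements of the natural cofree cocommutative coalgebra structure on $\dhat^\bullet$ whose coproduct partitions a graph into its connected components, so that $\dhat^\bullet=S^c(\phat^\bullet)$. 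The splitting by complexity of Remark~\ref{rm:complexity_homol} restricts to $\phat^\bullet$.

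Next, I would invoke the $\phat$-analog of Proposition~\ref{injective_resolution_prop}, proved in the uncoloured case in~\cite[Proposition 2.6]{aro_tur13}: for each complexity $t\geq 0$, the right $\Omega$-module $\phat^{\bullet,t}$ is finite dimensional in each homological degree, is injective in $\underset{\Omega}{\mathrm{Rmod}}$, and the natural inclusion $\qbb\otimes\widehat{\pi}_{t(d-2)+1}{\mathcal B}_d(\bullet)\hookrightarrow\phat^{\bullet,t}$ is a quasi-isomorphism; in particular $\phat^\bullet$ is an injective resolution of $\qbb\otimes\widehat{\pi}_*{\mathcal B}_d(\bullet)$ in each complexity. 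With this resolution in hand, the chain of equivalences from the proof of Theorem~\ref{emh_hrmod_thm} carries over verbatim, the only difference being that one restricts at every step to connected graphs:
\begin{align*}
\rmod\bigl(\widetilde{H}_*((\vee_{i=1}^r S^{m_i})^{\wedge\bullet},\qbb),\, \qbb\otimes\widehat{\pi}_*{\mathcal B}_d(\bullet)\bigr)
 & \simeq \underset{\Omega}{\mathrm{Rmod}}\bigl(\widetilde{H}_*((\vee_{i=1}^r S^{m_i})^{\wedge\bullet},\qbb),\, \phat^\bullet\bigr) \\
 & \simeq \bigoplus_{k=0}^{\infty}\hom_{\Sigma_k}\!\bigl(\widetilde{H}_*((\vee_{i=1}^r S^{m_i})^{\wedge k},\qbb),\, M^{\mathrm{conn}}(D_d^k)\bigr),
\end{align*}
where $M^{\mathrm{conn}}(D_d^k)\subset M(D_d^k)$ consists of connected graphs with external vertices of valence one. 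Applying the Hodge splitting~\eqref{homology_wedge} to the source and comparing with the explicit description of $\empi$ in Section~\ref{graphs_complexes_emh-empi}, the last expression is identified as a chain complex with $\empi$ itself.

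The main obstacle is verifying the injectivity of $\phat^{\bullet,t}$ as a right $\Omega$-module. Although $\phat^\bullet$ is a subcomplex of the injective $\dhat^\bullet$, it is not a direct summand in $\underset{\Omega}{\mathrm{Rmod}}$, so injectivity cannot be deduced formally from that of $\dhat^\bullet$ and must be checked directly by presenting each $\phat^{\bullet,t}$ as a product of the representable injective generators $\qbb[\hom_\Omega(i,-)]$ with appropriate sign twists, refining the combinatorial argument of~\cite[Proposition 2.6]{aro_tur13} to the connected setting. In the coloured framework the same decomposition applies, since the colours on external vertices affect only the decorations and not the underlying abstract graph combinatorics; the uncoloured argument therefore transfers with only bookkeeping changes.
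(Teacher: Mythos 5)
Your proposal matches the paper's proof in all essentials: the paper likewise replaces the injective resolution $\dhat^{\bullet}$ by its ``connected part'' $\phat^{\bullet}$, defined as the cross effect of the right $\Gamma$-module $P_d^{\bullet}$ of internally connected graphs (the primitive part of the coalgebra $D_d^{\bullet}$), and then reruns the chain of identifications from Theorem~\ref{emh_hrmod_thm} restricted to connected graphs, keeping track of the splitting by complexity. Two small points. First, your worry about establishing injectivity of $\phat^{\bullet}$ from scratch is unnecessary: you cite \cite[Proposition 2.6]{aro_tur13}, which is the statement for $\dhat^{\bullet}$, but the paper instead invokes \cite[Proposition 2.4]{aro_tur13} (see also \cite{turchin10}), where $\phat^{\bullet}$ is already shown, independently of $\dhat^{\bullet}$, to be a complex of injective $\Omega$-modules quasi-isomorphic to $\qbb\otimes\widehat{\pi}_*{\mathcal B}_d(\bullet)$ in each complexity; there is nothing new to re-derive. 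Second, your description of $\phat^{\bullet}$ as the span of connected graphs inside $\dhat^{\bullet}$ identifies the right object, but asserting outright that it is a ``right $\Omega$-submodule'' sweeps a point under the rug: the cleaner justification, which the paper uses implicitly, is that $P_d^{\bullet}\subset D_d^{\bullet}$ is a sub-$\Gamma$-module because the $\Gamma$-action respects the coalgebra structure (so preserves primitives), and then $\phat^{\bullet}=cr(P_d^{\bullet})$ inherits its $\Omega$-module structure from the cross effect. With those two adjustments your argument is exactly the paper's.
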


\begin{proof}
The proof goes in the same way as that of Theorem~\ref{emh_hrmod_thm}. Instead of $\dhat^{\bullet}$, one constructs a complex of injective $\Omega$-modules $\phat^{\bullet} \simeq \qbb \otimes \widehat{\pi}_*{\mathcal B}_d(\bullet)$ (see \cite[Proposition 2.4]{aro_tur13} or \cite{turchin10}) defined as the cross effect of the right $\Gamma$-module $P_d^{\bullet} = \{P_d^k\}_{k \geq 0}$. Here $P_d^k$ is the primitive part of $D_d^k$ -- it is spanned by so called {\it internally connected graphs}, that have exactly one connected component different from singleton.  It is also important that the quasi-isomorphism 
$\phat^{\bullet} \simeq \qbb \otimes \widehat{\pi}_*{\mathcal B}_d(\bullet)$ respects the splitting by complexity $t$. In $\phat^\bullet
\subset\dhat^\bullet$ it is recalled in Remark~\ref{rm:complexity_homol}, while for the right-hand side it is
$\qbb \otimes \widehat{\pi}_*{\mathcal B}_d(\bullet)\simeq\oplus_t\qbb \otimes \widehat{\pi}_{t(d-2)+1}{\mathcal B}_d(\bullet)$.
\end{proof}

%

We finally notice that Theorem~\ref{rational_homotopy_thm} is an  immediate consequence of Corollary~\ref{empi_emb_coro} and Theorem~\ref{empi_hrmod_thm}.

\subsection{Homotopy graph-complex $\empi$ and the modular envelop of ${\mathcal L}_\infty$}\label{ss:MOD(L)}
Similarly to \eqref{eq:emh_M(D)}, the complex $\empi$ splits into a direct sum of complexes
\begin{equation}\label{eq:empi_M(P)}
 \empi \cong\bigoplus_{k=1}^{\infty} \bigoplus_{s_1+\cdots +s_r =k} \mbox{hom}_{\Sigma_{s_1}\times\ldots\times\Sigma_{s_r}} \left(\widetilde{H}_* ( S^{s_1m_1+\cdots + s_rm_r}, \qbb), M(P_d^k) \right),
 \end{equation}
where $P_d^k$ is defined in the proof of Theorem~\ref{empi_hrmod_thm}. Similarly to $M(D_d^k)$, one defines 
$M(P_d^k)$ as the quotient complex of $P_d^k$ spanned by graphs in $P_d^k$ whose all external vertices are univalent.

The symmetric sequence $M(P_d^\bullet)$ is a sequence of graph-complexes that up to a regrading are components of the 
modular envelop of ${\mathcal L}_\infty$. We briefly recall this connection, while we provide more details in the 
follow up paper~\cite{songhaf_tur}. 

Cyclic and modular operads were introduced by E.~Getzler and M.~Kapranov~\cite{getzler_kapranov95,getzler_kapranov98}. 
We will only consider the case when the underlying category is that of chain complexes.
In short, a {\it cyclic operad} is a usual one $O=\{O(n),\, n\geq 0\}$, for which the output in its elements has the same role as the
inputs. In particular $O(n)$ is acted on by $\Sigma_{n+1}$. To distinguish the cyclic arity with the usual one, one writes $O((n))$
for $O(n-1)$. A {\it modular operad} is a collection of objects $M=\{M((g,n)),\, g\geq 0, \, n\geq 0\}$, such that
$\{M((n)) =\oplus_gM((g,n)),\, n\geq 0\}$ is a cyclic operad (allowing now cyclic arity 0), and in addition to this structure one
also has self-composition maps $M((g,n))\to M((g+1,n-2))$, satisfying natural axioms.\footnote{For more details,
see~\cite{getzler_kapranov98,hin_vaintrob02,markl01}. Getzler-Kapranov~\cite{getzler_kapranov98} also require the stability condition: $M((g,n))=0$
if $2g+n-2\leq 0$, which in particular excludes the unit $id\in M((0,2))$. Following Hinich-Vaintrob~\cite{hin_vaintrob02},
 we allow $M((0,2))$ to be one-dimensional spanned by the unit, but require all the other non-stable components to be trivial.}
 If one labels the inputs of $M((g_1,m))$ and $M((g_2,n))$ by respectively $m$ and $n$ elements sets $X$ and $Y$, the cyclic compositions are
 \begin{equation}\label{eq:comp1}
 \circ_{x,y}\colon M((g_1,X))\otimes M((g_2,Y))\to M((g_1{+}g_2,(X\sqcup Y)\setminus\{x,y\})),\,\, x\in X,\, y\in Y.
 \end{equation}
 The self compositions are
 \begin{equation}\label{eq:comp2}
 \circ_{x_1,x_2}M((g,X))\to M((g{+}1,X\setminus\{x_1,x_2\})),\,\, x_1,x_2\in X.
 \end{equation}
 One has an adjunction
 \[
 \MOD\colon\mathrm{CycOp}\rightleftarrows\mathrm{ModOp}\colon \mathbf{Cyc}
 \]
 between the categories of cyclic and modular operads~\cite{hin_vaintrob02}. The functor $\mathbf{Cyc}$ to any modular 
 operad assigns its $g=0$ part. Its left adjoint $\MOD$ assigns to a cyclic operad its {\it modular envelop}. 
 
 As explained by Getzler and Kapranov, the structure of a modular operad admits different twisted versions. One of them
 associated to the so called {\it cocycle} $\Det$ is of a special interest as it restricts as the same \lq\lq{}non-twisted\rq\rq{} cyclic 
 operadic structure. Roughly speaking for this twist, the composition maps~\eqref{eq:comp1} between different 
 components keep preserving
 the homological degree, while the self-compositions~\eqref{eq:comp2}  decrease the homological degree by one.
 One gets a similar adjunction
 \[
 \MOD_\Det\colon\mathrm{CycOp}\rightleftarrows\mathrm{ModOp_\Det}\colon \mathbf{Cyc}
 \]
 between the usual cyclic operads and the $\Det$-twisted modular ones.
 
 \begin{prop}[Section 3 in \cite{songhaf_tur}]\label{p:MOD}
 One has isomorpisms of symmetric sequences in chain complexes
 \[
 M(P_3^k)\simeq_{\Sigma_k} \Sigma^k\MODL((k))\otimes sign_k,\,\, k\geq 1;
 \]
 \[
 M(P_2^k)\simeq_{\Sigma_k} \Sigma\MODLdet((k)),\,\, k\geq 1.
 \]
\end{prop}

Here $\Sigma$ and $\Sigma^k$ denote the suspension and $k$-iterated suspension, respectively.
The genus of the left-hand sides corresponds to the first Betti number of the graphs.\footnote{For  readers
more familiar with the Feynman transform as defined in~\cite{getzler_kapranov98}, 
one has isomorphisms $\Sigma\mathfrak{s}\MODL\simeq \FCOMdet$, $\Sigma\mathfrak{s}\MODLdet\simeq \FCOM$,
where $\Sigma$ is the objectwise suspension, and $\mathfrak{s}$ is the modular operadic suspension, see~\cite{songhaf_tur}.}
For $d>3$, the regrading follows from the isomorphism
\begin{equation}\label{eq:Mg}
M_g(P_{d+2}^k)\simeq_{\Sigma_k}\Sigma^{2(g-1)+2k}M_g(P_d^k),
\end{equation}
where $M_g$ denotes the genus $g$ part of $M(P_{d+2}^k)$ and $M(P_{d}^k)$, respectively .

We conclude the section by recalling the well-known connection to the Vassiliev  knot 
invariants of links~\cite[Section~5.10]{getzler_kapranov98}, \cite{hin_vaintrob02}.  Consider the complex
$\empi$ for $m_1=\ldots=m_r=1$, $d=3$. Thanks to Proposition~\ref{p:MOD} and equation~\eqref{eq:empi_M(P)}, we get
\[
  \mathcal{E}_{\pi}^{1, \cdots, 1; 3} \cong\bigoplus_{k=1}^{\infty} \bigoplus_{s_1+\cdots +s_r =k} \MODL((k))_{\Sigma_{s_1}\times\ldots\times\Sigma_{s_r}}.
\]
The degree zero homology of this complex is expressed in terms of the modular envelop of the Lie operad:
\[
H_0\mathcal{E}_{\pi}^{1, \cdots, 1; 3}\cong \bigoplus_{k=1}^{\infty} \bigoplus_{s_1+\cdots +s_r =k} \mathbf{Mod}({\mathcal L}ie)((k))_{\Sigma_{s_1}\times\ldots\times\Sigma_{s_r}},
\]
which is exactly Bar Natan\rq{}s space of unitravalent graphs with colored univalent vertices, and which corresponds to the primitive
Vassiliev link invariants~\cite{barnatan95}.

\section{Low codimensional high dimensional string links}\label{s:low}
This section is devoted to Conjecture~\ref{con:low}. We explain here what supports this conjecture and why we think 
it is true.

\begin{conj}\label{con:low}
The statement~\eqref{empi_emb_formula} of Corollary~\ref{empi_emb_coro} holds for any codimension 
$\codimlow$, and any non-negative degree $*\geq 0$. 
\end{conj}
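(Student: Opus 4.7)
The plan is to adapt the approach of Fresse--Turchin--Willwacher, which established the $r=1$ version of~\eqref{empi_emb_formula} in the codimension $>2$ range, to the multi-component setting. Three ingredients are already available for us in the range $\codimlow$: the improved relative formality of ${\mathcal B}_{m+1}\hookrightarrow {\mathcal B}_d$ of~\cite{tur_wilw}, which applies precisely because $d\geq (m+1)+2$; the convergence of the Goodwillie--Weiss tower for the \emph{space} of embeddings (as opposed to its rational chains) in codimension $>2$; and the reduction $\embthm\simeq \overline{\mathrm{Emb}}_c(M,\rdbb)$ from Proposition~\ref{crucial_prop}, which replaces the string link space by the space of embeddings of the single open $(m+1)$-manifold $M$ whose one-point compactification is $\vee_{i=1}^r S^{m_i}$.

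The first and crucial step would be to establish a delooping statement generalising the equivalence $\overline{\mathrm{Emb}}_c(\rbb^m,\rbb^d)\simeq \Omega^{m+1}\mathrm{hOper}({\mathcal B}_m,{\mathcal B}_d)$ of~\cite{boav_weiss} to the manifold $M$. Since $M$ is obtained from $\rbb^{m+1}$ by removing an unknotted wedge of spheres (Remark~\ref{r:manifolds}), one expects $\overline{\mathrm{Emb}}_c(M,\rdbb)$ to be an $(m+1)$-fold loop space of a derived mapping space of right infinitesimal bimodules over ${\mathcal B}_{m+1}$, with source an infinitesimal bimodule encoding the embedded wedge of spheres, in the spirit of~\cite{duc_tur,dwyer_hess,dwyer_hess2,turchin14}. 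Combined with the relative formality of~\cite{tur_wilw}, such a delooping would reduce $\qbb\otimes\pi_*\overline{\mathrm{Emb}}_c(M,\rdbb)$ to a purely algebraic derived mapping space. The identification of the latter with $H(\empi)$ would then proceed exactly as in the proof of Theorem~\ref{empi_hrmod_thm}, via the injective resolution $\phat^\bullet$ of $\qbb\otimes\widehat{\pi}_*{\mathcal B}_d(\bullet)$ and the Hodge-type decomposition~\eqref{homology_wedge}; nothing in the resolution arguments of~\cite{aro_tur13} invoked there uses the stable-range hypothesis, so this last step should go through verbatim once the preceding reduction is in place.

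The main obstacle will be the delooping itself. For $r=1$ the space $\rbb^m$ is canonically identified with the affine line of one-disc configurations in ${\mathcal B}_m$, which is the geometric input powering the equivalence of~\cite{boav_weiss}; for $r>1$ there is no such immediate operadic interpretation of $M$, and one has to replace the picture by a coloured or multi-pointed variant of the little discs operad encoding the different strands of dimensions $m_i$. One must then verify that the resulting delooping is compatible with both the Hodge splitting~\eqref{splitting_htilde} and the splitting by complexity~\eqref{splitting_hhat_Pi}, and that the Hodge components assemble into the colouring $f_G\colon V_G^E\to\{1,\ldots,r\}$ of the hairy graphs in $\empi$. The $\pi_0$ statement of Theorem~\ref{t:low}, proved via Haefliger-type higher-dimensional link invariants, already confirms that the tree part of $\empi$ gives the correct answer in this low-codimensional regime; what the envisioned delooping would accomplish is to extend this calculation to all positive degrees.
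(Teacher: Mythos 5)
The statement you were asked to prove is a \emph{conjecture} in the paper, and the paper itself does not prove it. Instead the authors state exactly the same two supporting facts you invoke (the $r=1$ case of~\cite{fr_tur_wilw} and the $\pi_0$ confirmation of Theorem~\ref{t:low}), and they sketch a strategy virtually identical to yours: use the improved relative formality of~\cite{tur_wilw} together with a delooping of $\emb$ as an iterated loop space of a derived mapping space between algebraic objects built from the little discs operads. Your proposal is therefore not a proof but an honest reconstruction of the paper's own proof plan, and you correctly identify its missing ingredient.

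The gap you name --- the delooping of $\overline{\mathrm{Emb}}_c(M,\rdbb)$ for $r>1$ --- is precisely the gap the authors flag, so you are not underestimating the difficulty. One point worth making sharper: the paper explicitly warns that an attempt at such a delooping in the classical string-link case ($m_i=1$) was made in~\cite{dwyer_hess1}, that it describes an interesting candidate for the deloop, but that the proof of its main result contains a mistake. This means the delooping problem is not merely ``open'' but has resisted at least one serious published attempt; any proposal that treats it as a routine generalisation of~\cite{boav_weiss} is optimistic. Your parenthetical remark that ``one has to replace the picture by a coloured or multi-pointed variant of the little discs operad'' is a reasonable guess at the needed structure, but it stops short of specifying the algebraic object whose derived mapping space would deloop to $\emb$, which is where the known difficulties lie. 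Until that object is produced and the delooping equivalence proved, the chain of reductions from $\qbb\otimes\pi_*\emb$ to $H(\empi)$ cannot be completed, and the steps you describe after the delooping (the injective resolution $\phat^\bullet$, the Hodge splitting, the comparison with $\empi$) --- while correct in themselves, being the same as the proof of Theorem~\ref{empi_hrmod_thm} --- remain conditional. In short, your proposal is a faithful account of the conjectural strategy, not a proof, and you have correctly located the genuine gap.
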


There are two things that support this conjecture. Firstly, it is true for $r=1$, which was recently shown by
Fresse, Willwacher, and the second author~\cite[Remark~19]{fr_tur_wilw}. Secondly, Theorem~\ref{t:low} below shows that it is always true 
on the level of $\pi_0$.

As for the case $r=1$, we expect two main ingredients in a possible proof of the conjecture:
the relative Hopf formality of the little discs operads, which luckily holds in any codimension $\geq 2$~\cite{tur_wilw},
and some delooping statement for $\emb$, which would say that $\emb$ is an iterated loop space of
the derived mapping space between two algebraic objects described in terms of the little discs operads. 
Unfortunately, the problem of delooping  in the case $r\geq 2$ appears to be much more difficult.  An attempt for such delooping 
in the classical case of string links (all $m_i=1$) has been done in~\cite{dwyer_hess1}. It describes an interesting possible candidate of the delooping, but unfortunately, the paper has a mistake in the proof of its main result.

\begin{thm}\label{t:low}
For $\codimlow$, $\pi_0\emb$, with the product induced by the concatenation of links, is a finitely generated abelian group.
Moreover, one has
\begin{eqnarray}\label{eq:low}
\qbb\otimes \pi_0\emb\simeq H_0\empi.
\end{eqnarray}
\end{thm}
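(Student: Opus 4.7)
The plan is to combine Haefliger's classical surgery-theoretic classification of higher-dimensional link embeddings with a direct combinatorial analysis of $H_0\empi$ as a space of colored Jacobi trees.

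First I would establish the group structure and finite generation. Since $\codimlow$ implies that $\rdbb$ contains at least two directions perpendicular to the $(m+1)$-dimensional affine subspace through which $\iota$ factors (with $m=\max_i m_i$), concatenating links along two such independent directions endows $\pi_0\emb$ with two compatible monoid operations; by the Eckmann--Hilton argument they coincide and are commutative, and inverses arise from the standard high-codimension mirror construction. Passing from long to spherical links and applying the Haefliger--Zeeman classification, generalized to multi-component links of varying dimensions in~\cite{haefliger,cr_fer_skop}, presents $\pi_0\emb$ as an iterated extension of finitely generated abelian groups (the individual components' Haefliger knot groups together with the groups of linking-type invariants of the various sublinks), which gives the first claim.

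Next I would identify $H_0\empi$ with a space of hairy trees. Substituting $|E_G|=|V_G^I|+|V_G^E|+b_1(G)-1$ into the degree formula yields
\[
  \deg G \;=\; -|V_G^I| \;+\; \sum_{i=1}^r (d-1-m_i)\,|f_G^{-1}(i)| \;+\; (d-1)\bigl(b_1(G)-1\bigr),
\]
while the valence constraint $2|E_G|\geq |V_G^E|+3|V_G^I|$ forces $|V_G^E|\geq |V_G^I|-2b_1(G)+2$. Combining these inequalities with the bound $d-1-m_i\geq 2$ coming from $\codimlow$ and the nonemptiness $|V_G^E|\geq 1$ shows that $\deg G=0$ can only hold when $b_1(G)=0$, i.e.\ when $G$ is a tree. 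Consequently $H_0\empi$ is spanned by colored hairy trees modulo the antisymmetry and Jacobi relations encoded in $\empi$ via the edge-orientation and internal-vertex-expansion identities---precisely the Bar~Natan-type space of colored unitrivalent trees appearing as the bottom degree of the modular envelope of $L_\infty$ discussed in the introduction.

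The final step is to construct the isomorphism $\qbb\otimes\pi_0\emb\simeq H_0\empi$ by assigning to each colored Jacobi tree the corresponding generalized Milnor-type invariant of a link, realized via iterated Whitehead-bracket constructions on the link complement~\cite{casson,habeg_masb00,kosch,munson11} or equivalently as a Bott--Taubes-type configuration-space integral~\cite{sakai10}. One filters $\pi_0\emb$ by the Haefliger order filtration and $H_0\empi$ by the number of internal vertices (the complexity $t$), and checks that the induced map on associated graded is rationally an isomorphism. The main obstacle lies in this last identification: although the $r=1$ case reduces to the Arone--Turchin computation~\cite{aro_tur13} matched with Haefliger's individual knot groups, the multi-component case requires showing that every rational additive invariant of $r$-component higher-dimensional string links is a linear combination of tree-invariants and that the only relations are $AS$ and $IHX$---a duality argument between iterated Whitehead products in $\qbb\otimes\pi_*C(\bullet,\rdbb)$ and trees in the bottom-degree modular envelope of $L_\infty$, ultimately reflecting the Koszul duality between $\mathrm{Com}$ and $\mathrm{Lie}$.
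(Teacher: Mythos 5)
Your first two steps are sound and run parallel to the paper's. For finite generation you use an Eckmann--Hilton argument for commutativity, whereas the paper instead cites Haefliger's abelianness of $\pi_0\embsphere$ and proves a splitting $\pi_0\emb\simeq\pi_0^U\emb\times\prod_i\pi_0\overline{\mathrm{Emb}}_c(\rbb^{m_i},\rdbb)$ (Lemma~\ref{l:linksU}) before invoking \cite{haefliger} and Corollary~\ref{cor:r=1}; either route works. Your degree count showing only genus-zero graphs contribute to $H_{\leq 0}\empi$ is correct and is a clean closed-form alternative to the paper's inductive argument in Lemma~\ref{l:h<0}. One imprecision: $H_0\empi$ is not all colored trees modulo $AS$/$IHX$, but only the degree-zero piece, which the paper pins down exactly via Lemma~\ref{l:hempi0} as the degree $d-3$ part of $F_{\lie}\bigl(\tilde H_*(\vee_{i=1}^r S^{d-m_i-2},\qbb)\bigr)$, using that the homology of the labeled tree complex is $\mathrm{Lie}(n-1)$.

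Your third step is where the genuine gap lies, and you acknowledge this yourself. You propose matching tree-invariants with a Milnor/Whitehead-bracket construction on link complements plus a filtration argument, but you never produce the comparison map, let alone prove it an isomorphism on associated graded. The paper avoids this entirely: it cites \cite[Lemma~1.3]{cr_fer_skop}, which already computes $\qbb\otimes\pi_0^U\embsphere$ as $\qbb\otimes K^{m_1\ldots m_r d}$, the kernel of the Whitehead-bracket map $q_0\colon\Lambda^{m_1\ldots m_r d}\to\Pi^{m_1\ldots m_r d}$ on homotopy groups of the wedge $\vee_{i=1}^rS^{d-m_i-1}$ (arising from Haefliger's homotopy-theoretic classification, not from the link complement). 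The remaining work is purely algebraic: rationalize those homotopy groups via $\qbb\otimes\pi_*(\vee S^{d-m_i-1})\cong\Sigma F_{\mathrm{Lie}(\bullet)}(\tilde H_*(\vee S^{d-m_i-2},\qbb))$, note that $\overline\Lambda$ is similarly an induced Lie module, and apply the operadic identity $\mathrm{Lie}(n-1)\simeq_{\Sigma_n}\ker\bigl(\mathrm{Ind}^{\Sigma_n}_{\Sigma_{n-1}}\mathrm{Lie}(n-1)\to\mathrm{Lie}(n)\bigr)$ (Lemma~\ref{l:lie}) together with Lemma~\ref{l:hempi0} to identify the kernel with $H_0\mathcal{E}^{m_1,\ldots,m_r,d}_{0\pi}$. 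Without invoking \cite{cr_fer_skop} (or re-proving the equivalent statement from scratch via Haefliger's exact sequences), your sketch does not constitute a proof; the ``duality between Whitehead products and trees'' you gesture at is exactly the content of these two lemmas, which must be stated and used precisely.
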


One should mention that the fact that $\pi_i\emb$, $i>0$, are finitely generated abelian groups, easily follows
from the embedding calculus, which is always applicable when codimension $>2$~\cite{good_klein,good_weiss99}.

This theorem is a consequence of Lemmas~\ref{l:h<0} and~\ref{l:low_all}. Let us first understand the right-hand side of~\eqref{eq:low}. The complex
$\empi$ splits into a direct sum of complexes
\[
\empi=\bigoplus_{g\geq 0} \mathcal{E}_{g\pi}^{m_1, \cdots, m_r; d},
\]
where $ \mathcal{E}_{g\pi}^{m_1, \cdots, m_r; d}$ is spanned by the graphs whose first Betti number ($=$ genus) is $g$.

\begin{lem}\label{l:h<0}
For $\codimlow$, $H_{\leq 0}\empi=H_{\leq 0}  \mathcal{E}_{0\pi}^{m_1, \cdots, m_r; d}$.
\end{lem}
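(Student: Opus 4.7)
The plan is to show that each summand $\mathcal{E}_{g\pi}^{m_1,\ldots,m_r,d}$ with $g\geq 1$ is supported in strictly positive homological degrees; combined with the observation that the differential (expansion of an internal vertex, adding one new vertex and one new edge) preserves the first Betti number and hence the genus splitting is indeed a splitting of chain complexes, this forces $H_{\leq 0}$ of the positive-genus part to vanish and yields the lemma.

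To carry this out, I would fix a connected hairy colored graph $G$ of genus $g$ and set $h_i:=|f_G^{-1}(i)|$, $h:=\sum_i h_i$, $v:=|V_G^I|$. The Euler relation $|V_G|-|E_G|=1-g$ gives $|E_G|=h+v-1+g$, so a direct substitution in the definition of $\deg(G)$ produces
\[
\deg(G)\;=\;\sum_{i}h_i(d-1-m_i)\;-\;v\;+\;(d-1)(g-1).
\]
The valence constraint (external vertices have valence $1$, internal vertices valence $\geq 3$, with total valence $=2|E_G|$) reads $h+3v\leq 2(h+v-1+g)$, i.e., $v\leq h-2+2g$; plugging this bound in yields
\[
\deg(G)\;\geq\;\sum_{i}h_i(d-2-m_i)\;+\;(d-3)(g-1).
\]

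Under the hypothesis $\codimlow$ one has $d-2-m_i\geq 1$ for every $i$ and, since $m_i\geq 1$, also $d\geq 4$. Because $V_G^E$ is non-empty, $h\geq 1$, so the first sum on the right is at least $1$; for $g\geq 1$ the second term is nonnegative. Hence $\deg(G)\geq 1$ for every hairy graph of genus $\geq 1$, which is exactly the desired vanishing $H_{\leq 0}\mathcal{E}_{g\pi}^{m_1,\ldots,m_r,d}=0$ for all $g\geq 1$. I do not expect any real obstacle: the whole argument is a careful degree count, and the strict inequality $d-m_i>2$ is fully used, both to make $d-2-m_i$ strictly positive (so a single external vertex already contributes) and to ensure $d-3\geq 0$ (so $(d-3)(g-1)\geq 0$ for $g\geq 1$).
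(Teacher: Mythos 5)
Your proof is correct, and it takes a slightly different route from the paper's. The paper proves the positivity of the degree of any positive-genus graph by an induction on graph operations: it starts from a tadpole with a single hair (degree $d-m_i-2>0$), then shows that attaching a hair adds $d-m_i-2>0$, attaching an edge adds $d-3>0$, and contracting an edge adds $+1$, concluding that every uni-trivalent graph of positive genus, and hence every graph obtained from such by contractions, has positive degree. Your argument replaces this build-up induction with a single closed-form bound
\[
\deg(G)\;=\;\sum_i h_i(d-1-m_i)-v+(d-1)(g-1)\;\geq\;\sum_i h_i(d-2-m_i)+(d-3)(g-1),
\]
obtained from the Euler relation $|E_G|=h+v-1+g$ and the valence bound $v\leq h-2+2g$, together with $h\geq 1$ (external vertex set is nonempty by definition) and integrality of $d,m_i$. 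Both approaches are pure degree counts and use the same ingredients ($d-m_i-2>0$ and $d-3>0$); what your version buys is that it sidesteps the implicit combinatorial claim in the paper that every uni-trivalent hairy graph of positive genus arises from the base case by the stated attaching operations, which the paper asserts without proof. You also correctly observe the prerequisite that the differential (expanding an internal vertex adds exactly one vertex and one edge) preserves the first Betti number, so the genus splitting is a splitting of chain complexes, which is what makes the degree bound give the vanishing $H_{\leq 0}\mathcal{E}_{g\pi}^{m_1,\ldots,m_r,d}=0$ for $g\geq 1$.
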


\begin{proof}
It is enough to show that all the graphs of positive genus have positive homological degree. One can proceed by induction.
The degree of any graph of the form  \begin{tikzpicture}[scale=.23]
 \draw (0,0) circle (1);
 \draw (-180:1) node[int]{} -- +(-1.2,0);
 \end{tikzpicture} is $d-m_i-2>0$. Any uni-trivalent hairy graph is obtained from \begin{tikzpicture}[scale=.23]
 \draw (0,0) circle (1);
 \draw (-180:1) node[int]{} -- +(-1.2,0);
 \end{tikzpicture} by a sequence of attachments of hairs (inside an edge) and edges (both endpoints on the same edge or on two different ones). The first operation creates 2 new edges and two vertices -- one external and one internal. The total degree change is 
 \[
 2(d-1)-d-m_i=d-m_i-2>0.
 \]
 The second operation creates 3 new edges and two new internal vertices. The total degree change is
 \[
 3(d-1)-2d=d-3>0.
 \]
 Finally, any other hairy graph is obtained from a uni-trivalent one by a seqence of contractions of edges. Each such contraction increases the degree by one. 
 \end{proof}

In case $r=1$, $H_*\mathcal{E}_{0\pi}^{m_1, d}$ is always one-dimensional. In case of even codimension $d-m_1$, it is 
spanned by the line graph \raisebox{.7ex}{$
\begin{tikzpicture}[scale=.7]
\draw (0,1) -- (1,1);
\end{tikzpicture}$} of degree $d-2m_1-1$; in case of odd codimension -- by the tripod $
\begin{tikzpicture}[scale=.4]
\node[int] (v) at (0,0){};
\draw (v) -- +(90:1) (v) -- ++(210:1) (v) -- ++(-30:1);
\end{tikzpicture}
$ of degree $2d-2m_1-3$.  The following result was obtained in~\cite[Corollary~20]{fr_tur_wilw} as a corollary of~\eqref{eq:low}, which was  proved there  for $r=1$.

\begin{coro}[\cite{fr_tur_wilw}]\label{cor:r=1}
For $d-m>2$, $\pi_0\overline{\mbox{Emb}}_c(\mathbb{R}^{m}, \rdbb)$ is a finitely generated abelian group of rank 
$\leq 1$. It is infinite only if either $m=2k+1$, $d=4k+3$, $k\geq 1$, or $m=4k-1$, $d=6k$, $k\geq 1$. 
\end{coro}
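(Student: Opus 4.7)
The plan is to reduce the statement to a grading computation inside $\mathcal{E}_\pi^{m,d}$. The inputs I would take as given are: the identification $\qbb\otimes \pi_0 \overline{\mathrm{Emb}}_c(\mathbb{R}^m,\rdbb)\cong H_0\mathcal{E}_\pi^{m,d}$ for $r=1$ and $d-m>2$, which is the main result of \cite{fr_tur_wilw} (i.e.\ equation~\eqref{eq:low} for one strand); Lemma~\ref{l:h<0}, which reduces $H_0\mathcal{E}_\pi^{m,d}$ to the tree part $H_0\mathcal{E}_{0\pi}^{m,d}$; and the statement recalled just before the corollary that for $r=1$ the homology $H_*\mathcal{E}_{0\pi}^{m,d}$ is one-dimensional, represented by the segment in even codimension and by the tripod in odd codimension. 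Finite generation of $\pi_0$ itself I would inherit from Theorem~\ref{t:low} (via the classical Haefliger-style analysis of isotopy classes), valid in the full range $d-m>2$. These ingredients together already force rank $\leq 1$.

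What remains is to determine when the surviving generator actually sits in degree~zero. Applying the grading convention (edge $\mapsto d-1$, internal vertex $\mapsto -d$, external vertex $\mapsto -m$), a direct count gives the segment degree $d-2m-1$ and the tripod degree $3(d-1)-d-3m=2d-3m-3$. The equation $d-2m-1=0$ is equivalent to $d=2m+1$; combined with $d-m$ being even this forces $m$ odd, and writing $m=2k+1$, $d=4k+3$, one reads $d-m=2k+2>2$ exactly when $k\geq 1$. The equation $2d-3m-3=0$ gives $d=3(m+1)/2$, forcing $m$ odd; writing $m=2j-1$, $d=3j$, the codimension $d-m=j+1$ is odd precisely when $j$ is even, so setting $j=2k$ yields $m=4k-1$, $d=6k$, and $d-m=2k+1>2$ requires $k\geq 1$. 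In every other pair $(m,d)$ the generator sits in strictly positive degree, so $H_0\mathcal{E}_\pi^{m,d}=0$ and $\pi_0$ is rationally trivial, hence a finite abelian group.

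The main obstacle is not this arithmetic but the input~\eqref{eq:low} for $r=1$: it is a deep theorem of \cite{fr_tur_wilw} resting on the relative Hopf formality of the little discs operads \cite{tur_wilw} together with the delooping $\overline{\mathrm{Emb}}_c(\mathbb{R}^m,\rdbb)\simeq \Omega^{m+1}\mathrm{hOper}(\mathcal{B}_m,\mathcal{B}_d)$ of \cite{boav_weiss}. Neither component has a known analogue for $r>1$, which is precisely why the corollary is stated only for a single strand. Granted this input and the one-dimensionality of $H_*\mathcal{E}_{0\pi}^{m,d}$ (itself a standard tree/Lie-Koszul computation, accessible from the description of Section~\ref{generating_function_homology}), the rest is the elementary parity check sketched above.
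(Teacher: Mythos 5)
Your proposal matches the paper's implicit derivation: the corollary is presented as a consequence of~\eqref{eq:low} for $r=1$ (proved in \cite{fr_tur_wilw}), reduced to the tree part via Lemma~\ref{l:h<0}, and decided by the grading of the one generator of $H_*\mathcal{E}_{0\pi}^{m,d}$ — which the paper leaves as an implicit parity check and you spell out correctly. Two small remarks. First, your degree for the tripod, $2d-3m-3$, is the correct one ($3(d-1)-d-3m$); the text just before the corollary writes $2d-2m_1-3$, which is a typo (and would not even admit integer solutions), so do not be thrown by the discrepancy. Second, inheriting finite generation from Theorem~\ref{t:low} introduces a circular citation: the proof of Lemma~\ref{l:low_all} (and hence of Theorem~\ref{t:low}) itself invokes Corollary~\ref{cor:r=1}. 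The underlying mathematical fact is not circular — it comes directly from Haefliger's analysis of isotopy classes of knotted spheres (or from \cite{fr_tur_wilw}) — but you should cite that source directly rather than the later theorem in this paper.
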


One has a fiber sequence
\begin{eqnarray}\label{eq:fiber-seq}
\overline{\mbox{Emb}}_c(\mathbb{R}^{m}, \rdbb)\to \mbox{Emb}_c(\mathbb{R}^{m}, \rdbb)\to \Omega^mV_m(\rdbb),
\end{eqnarray}
where $V_m(\rdbb)$ is the Stiefel manifold $SO(d)/SO(d-m)$. (By the Smale-Hirsch principle $\mbox{Imm}_c(\mathbb{R}^{m}, \rdbb)\simeq \Omega^m V_m(\rdbb)$). From~\eqref{eq:fiber-seq} we get the long exact sequence
\begin{eqnarray}\label{eq:les}
\ldots\to\pi_{m+1}V_m(\rdbb)\stackrel{j_*}{\longrightarrow}\pi_0 \overline{\mathrm{Emb}}_c(\mathbb{R}^m, \mathbb{R}^d)
\stackrel{k_*}{\longrightarrow} \pi_0  \mathrm{Emb}_c(\mathbb{R}^m, \mathbb{R}^d) \stackrel{l_*}{\longrightarrow}
\pi_mV_m(\rdbb).
\end{eqnarray}
The case $m=2k+1$, $d=4k+3$, $k\geq 1$, of the corollary corresponds to the graph-cycle  \raisebox{.7ex}{$
\begin{tikzpicture}[scale=.7]
\draw (0,1) -- (1,1);
\end{tikzpicture}$}. Geometrically it appears as the image under $j_*$ of the Euler class of $SO(d-m)$ in $V_m(\rdbb)=
SO(d)/SO(d-m)$. The case $m=4k-1$, $d=6k$, $k\geq 1$, corresponds to the graph-cycle $
\begin{tikzpicture}[scale=.4]
\node[int] (v) at (0,0){};
\draw (v) -- +(90:1) (v) -- ++(210:1) (v) -- ++(-30:1);
\end{tikzpicture}
$. Its image in $\pi_0\mathrm{Emb}_c(\mathbb{R}^m, \mathbb{R}^d)$ under $k_*$ is the Haefliger trefoil~\cite{haefliger_tref}, which is known to be trivial as immersion 
(equivalently lies in the image of $k_*$). 

To describe $H_0\mathcal{E}_{0\pi}^{m_1, \cdots, m_r; d}$ we recall some standard facts and notation. 

The operad ${{\mathcal L}ie}$ 
is cyclic, thus ${{\mathcal L}ie}(n-1)={{\mathcal L}ie}((n))$ is a $\Sigma_n$-module. The following result is well known.

\begin{lem}[\cite{levine02,habeg_pitsch03,con_schn_teich12}]\label{l:lie}
For $n\geq 1$, one has
\[
{{\mathcal L}ie}((n))\simeq_{\Sigma_n} \mathrm{ker}\left(\mathrm{Ind}^{\Sigma_n}_{\Sigma_{n-1}}{{\mathcal L}ie}(n-1)
\stackrel{q}{\to}{{\mathcal L}ie}(n)\right),
\]
where the map $q$ is described below.
\end{lem}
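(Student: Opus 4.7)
I would proceed in three steps. \emph{Step one: describe $q$ explicitly.} Using the identification $\mathrm{Ind}^{\Sigma_n}_{\Sigma_{n-1}}\mathrm{Lie}(n-1)\cong \bigoplus_{i=1}^n \mathrm{Lie}(\{1,\ldots,n\}\setminus\{i\})$ afforded by a choice of coset representatives (the permutation sending $n\mapsto i$), one expects $q$ to send a Lie word $w$ on $\{1,\ldots,n\}\setminus\{i\}$ to the bracket $[w,x_i]\in \mathrm{Lie}(n)$. This map is surjective by Dynkin's theorem: every element of $\mathrm{Lie}(n)$ admits a right-normed presentation $[x_{\sigma(1)},[x_{\sigma(2)},\ldots,[x_{\sigma(n-1)},x_n]\ldots]]$, hence lies in the image of the $i=n$ summand. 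A dimension count then gives $\dim\ker q = n\cdot(n-2)! - (n-1)! = (n-2)! = \dim\mathrm{Lie}(n-1)$, so the statement amounts to constructing a $\Sigma_n$-equivariant injection $\iota\colon \mathrm{Lie}(n-1)\hookrightarrow \mathrm{Ind}^{\Sigma_n}_{\Sigma_{n-1}}\mathrm{Lie}(n-1)$ with image in $\ker q$.

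\emph{Step two: construct $\iota$ using the cyclic operad structure.} The cyclic operad structure on $\mathrm{Lie}$ is precisely what upgrades the standard $\Sigma_{n-1}$-action on $\mathrm{Lie}(n-1)$ to the $\Sigma_n$-action on the cyclic extension $\mathrm{Lie}((n))$, where one treats the ``output'' as an additional labeled input (labeled $n$ by convention). For $w\in \mathrm{Lie}((n))$ I would set
\[
\iota(w) \;=\; \sum_{i=1}^n \varepsilon_i\,(\rho_i\cdot w)\otimes e_i,
\]
where $\rho_i\in\Sigma_n$ is the cyclic rotation moving the label $n$ into position $i$, the $e_i$ are the corresponding coset representatives, and $\varepsilon_i\in\{\pm 1\}$ are fixed by a sign convention. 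Equivariance of $\iota$ under $\Sigma_n$ is then automatic from the cyclic structure.

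\emph{Step three: verify $q\circ \iota = 0$ and conclude.} Unfolding the definitions, $q(\iota(w))$ becomes a signed cyclic sum $\sum_{i=1}^n \varepsilon_i[\rho_i\cdot w, x_i]\in\mathrm{Lie}(n)$, which vanishes by repeated application of the Jacobi identity: this is exactly the statement that a cyclic Lie word is invariant under re-rooting modulo Jacobi. Injectivity of $\iota$ can be checked against the Dynkin basis in a single fixed summand, and combined with the dimension count of Step one this forces $\mathrm{im}\,\iota = \ker q$. The main obstacle is Step three, since both $\Sigma_n$-equivariance of $\iota$ and the vanishing $q\circ\iota=0$ are delicate to verify by hand with the correct signs. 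The cleanest approach is to recognize the short exact sequence
\[
0 \longrightarrow \mathrm{Lie}((n)) \longrightarrow \mathrm{Ind}^{\Sigma_n}_{\Sigma_{n-1}}\mathrm{Lie}(n-1) \stackrel{q}{\longrightarrow} \mathrm{Lie}(n) \longrightarrow 0
\]
as the canonical ``cyclic extension exact sequence'' of the cyclic operad $\mathrm{Lie}$ in the sense of~\cite{getzler_kapranov98,markl01}; alternatively, one can quote the parallel version of this exact sequence already established in~\cite{levine02,habeg_pitsch03,con_schn_teich12} in the context of Milnor-type invariants.
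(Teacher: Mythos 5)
The paper does not actually supply its own proof of this lemma: it states it as a quoted fact from the references \cite{levine02,habeg_pitsch03,con_schn_teich12} and only spells out the map $q$. There is therefore no "paper proof" to compare against. That said, your proof plan is the standard argument behind this result and is essentially correct as a sketch. Your three ingredients --- surjectivity of $q$ via Dynkin's right-normed bases plus the dimension count $n(n-2)!-(n-1)!=(n-2)!$, the $\Sigma_n$-equivariant splitting map $\iota$ built from the cyclic structure on $\mathrm{Lie}$ by re-rooting a cyclic Lie tree at each of its $n$ leaves, and the vanishing $q\circ\iota=0$ as a generalized Jacobi/IHX identity --- are exactly the content of the cited sources. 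You rightly flag that the equivariance of $\iota$ and the vanishing $q\circ\iota=0$ involve careful sign bookkeeping; that is indeed the only delicate point, and the cleanest way to handle it is (as you suggest) to quote the tree-level exact sequence from Levine or Conant--Schneiderman--Teichner rather than re-derive all signs from scratch. In short: correct approach, consistent with what the paper implicitly relies on.
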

Elements of $\mathrm{Ind}^{\Sigma_n}_{\Sigma_{n-1}}{{\mathcal L}ie}(n-1)$ can be written in the form $\sum_{i=1}^n \alpha_i x_i\otimes
f_i(x_1,\ldots,\widehat{x}_i,\ldots,x_n)$, where $\alpha_i\in\qbb$, $f_i\in{{\mathcal L}ie}(n-1)$. The map $q$ sends
\[
q\colon x_i\otimes
f_i(x_1,\ldots,\widehat{x}_i,\ldots,x_n)\mapsto [x_i,f_i(x_1,\ldots,\widehat{x}_i,\ldots,x_n)].
\]

For any symmetric sequence $M(\bullet)$ and any graded vector space $V$, we denote by
\[
F_{M(\bullet)}(V):=\bigoplus_{k\geq 0} M(k)\otimes_{\Sigma_k}V^{\otimes k}.
\]

\begin{lem}\label{l:hempi0}
For any $\codimlow$, 
\[
H_*\mathcal{E}_{0\pi}^{m_1, \cdots, m_r; d}=\Sigma^{-d+3}F_{\lie}(\tilde H_*(\vee_{i=1}^rS^{d-m_i-2},\qbb)).
\]
\end{lem}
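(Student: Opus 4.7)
The plan is to identify $\mathcal{E}_{0\pi}^{m_1,\ldots,m_r,d}$ with the classical tree-complex model of the Lie cooperad, with the coloring data absorbed into the coefficient vector space. A connected genus-$0$ graph is a tree, so $\mathcal{E}_{0\pi}^{m_1,\ldots,m_r,d}$ is spanned by hairy colored trees with $n\geq 2$ external vertices and internal vertices of valence $\geq 3$ (a valence/edge count rules out $n=1$).

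First I would decouple the coloring from the underlying uncolored tree combinatorics. Set $V := \bigoplus_{i=1}^r \qbb\langle e_i\rangle$ with $|e_i|=-m_i$, and let $\mathcal{T}(n)$ denote the chain complex of trees with $n$ \emph{labeled} external vertices of valence $1$ and internal vertices of valence $\geq 3$, using the degree and sign conventions of Subsection~\ref{graphs_complexes_emh-empi} (external vertices in degree $0$, internal in degree $-d$, edges in degree $d-1$, differential summing the expansions of internal vertices). Encoding each coloring function as a simple tensor in $V^{\otimes n}$ and taking $\Sigma_n$-coinvariants to account for the unlabeling of external vertices yields
\[
\mathcal{E}_{0\pi}^{m_1,\ldots,m_r,d} \;\cong\; \bigoplus_{n\geq 2}\mathcal{T}(n)\otimes_{\Sigma_n} V^{\otimes n}.
\]

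The main step is then to prove
\[
H_*(\mathcal{T}(n))\;\cong\;\Sigma^{(n-1)(d-2)+1}\,\mathrm{Lie}(n-1),
\]
as $\Sigma_n$-representations, with $\mathrm{Lie}(n-1)$ carrying the cyclic structure of Lemma~\ref{l:lie}. This is the standard tree-model of the Koszul duality between $\mathrm{Com}$ and $\mathrm{Lie}$: up to operadic suspension, $\mathcal{T}(n)$ is the arity-$n$ component of the (co)bar construction that presents $\mathrm{Lie}$ from $\mathrm{Com}$. Concretely, the top degree $(n-1)(d-2)+1$ of $\mathcal{T}(n)$ is attained exactly on the trivalent trees; the cokernel of the expansion differential from subtrivalent trees is spanned by trivalent trees modulo AS and IHX, which is the classical presentation of $\mathrm{Lie}(n-1)$ in the cyclic operad; and acyclicity elsewhere follows from a standard contracting-homotopy argument (e.g.\ collapsing a distinguished leaf into its neighbour).

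Finally, absorbing $n(d-2)$ of the suspension into the coefficients via $W := \Sigma^{d-2}V$, so that $|e_i|_W = d-m_i-2$ and $W \cong \tilde H_*(\vee_{i=1}^r S^{d-m_i-2},\qbb)$, and using $(n-1)(d-2)+1 = n(d-2) - (d-3)$, one computes
\begin{align*}
H_*(\mathcal{E}_{0\pi}^{m_1,\ldots,m_r,d})
 &\cong \bigoplus_{n\geq 2}\Sigma^{(n-1)(d-2)+1}\mathrm{Lie}(n-1)\otimes_{\Sigma_n}V^{\otimes n}\\
 &\cong \Sigma^{-d+3}\bigoplus_{n\geq 2}\mathrm{Lie}(n-1)\otimes_{\Sigma_n}W^{\otimes n}
  \;=\;\Sigma^{-d+3}\,F_{\lie}(W),
\end{align*}
which is the claimed identity. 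The principal obstacle is the $\Sigma_n$-equivariance in the computation of $H_*(\mathcal{T}(n))$: the identification of trivalent trees modulo AS and IHX with the cyclic-Lie representation of Lemma~\ref{l:lie} must be made compatibly with all Koszul signs on both sides. I would handle this by invoking the standard bar/cobar model for the $\mathrm{Com}$--$\mathrm{Lie}$ Koszul duality used throughout \cite{aro_tur12,aro_tur13}, whose sign conventions are arranged precisely to reproduce the cyclic structure of Lemma~\ref{l:lie}.
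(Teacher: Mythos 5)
Your proposal is correct and takes essentially the same route as the paper: both reduce the genus-$0$ complex to the classical tree complex (with the colour data absorbed into the coefficient space), invoke the well-known fact that its homology is concentrated in a single degree and equals $\mathrm{Lie}(n-1)$ (the paper cites Whitehouse, you cite the $\mathrm{Com}$--$\mathrm{Lie}$ bar/cobar model — the same fact), and then finish with a degree and sign count. The paper's proof is just a compressed version of yours; your explicit decoupling $\mathcal{E}_{0\pi}\cong\bigoplus_n\mathcal{T}(n)\otimes_{\Sigma_n}V^{\otimes n}$ and the regrading $W=\Sigma^{d-2}V$ make transparent the ``careful count of degrees and signs'' that the paper leaves to the reader.
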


\begin{proof}
It is well known that the homology of the complex of trees, whose leaves are bijectively labeled by $\{1,\ldots,n\}$,
is concentrated in the top homological degree and (up to a regrading and a possible tensor with the sign representation)
is isomorphic to ${{\mathcal L}ie}((n))$, see~\cite{white}. This also follows from Proposition~\ref{p:MOD} and equation~\eqref{eq:Mg} applied for $g=0$. A careful count of the degrees and signs finishes the proof.
\end{proof}

In particular, this lemma says that $H_0 \mathcal{E}_{0\pi}^{m_1, \cdots, m_r; d}$ is isomorphic to the degree $d-3$ part
of $F_{\lie}(\tilde H_*(\vee_{i=1}^rS^{d-m_i-2},\qbb))$.

Now we want to concentrate on the left-hand side of~\eqref{eq:low}. Let $\embsphere$ be the space of smooth
spherical links $\coprod_{i=1}^r S^{m_i}\hookrightarrow S^d$.
It was shown by A.~Haefliger~\cite{haefliger} that
for $\codimlow$, $\pi_0\embsphere$ is a finitely generated abelian group, whose product is defined as follows. Given two links
$L_1,L_2\in\embsphere$, we move one inside one semisphere, and the other inside the complement semisphere. Then we connect 
the components of $L_1$ with the corresponding components of $L_2$ by very narrow tubes to form a new link. The result
as an element of $\pi_0\embsphere$ does not depend on the paths of the tubes since the complement of $L_1\coprod L_2$
is simply connected. By $\pi_0^U\embsphere$, $\pi_0^U\embnobar$, $\pi_0^U\emb$ we denote the kernel of the corresponding
map
\begin{gather*}
\pi_0\embsphere\to \prod_{i=1}^r \pi_0\mathrm{Emb}(S^{m_i},S^d);\\
\pi_0\embnobar\to  \prod_{i=1}^r \pi_0\mathrm{Emb}_c(\rbb^{m_i},\rdbb);\\
\pi_0\emb\to \prod_{i=1}^r \pi_0\overline{\mathrm{Emb}}_c(\rbb^{m_i},\rdbb).
\end{gather*}
The letter $U$ refers to \lq\lq{}unknot\rq\rq{}. Each of the groups is the subgroup of links formed by unknots.

The following lemma proves the first statement of Theorem~\ref{t:low}. Its proof is an easy exercise, which we leave to the reader.

\begin{lem}\label{l:linksU}
For $\codimlow$, one has isomorphisms of monoids (and thus of finitely generated abelian groups):
\begin{itemize}
\item $\pi_0\embnobar \simeq \pi_0 \embsphere$;
\item $\pi_0^U\emb\simeq\pi_0^U\embnobar\simeq\pi_0^U\embsphere$;
\item $\pi_0\emb\simeq \pi_0^U\emb\times \prod_{i=1}^r \pi_0\overline{\mathrm{Emb}}_c(\rbb^{m_i},\rdbb)$.
\end{itemize}
\end{lem}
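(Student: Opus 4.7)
The plan is to establish the three displayed isomorphisms in turn, using standard codimension $\codimlow$ techniques to pass between long and spherical pictures and between framed and unframed versions. Throughout, the abelian group structure on each $\pi_0$ is the one induced by concatenation along a parallel direction (for $\emb$, $\embnobar$) or by connecting components through narrow tubes (for $\embsphere$), and commutativity of these products in codimension $\codimlow$ is standard and is already implicit in the statement.

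For the first isomorphism, I would use one-point compactification. Given $L\in\embnobar$, the condition that $L$ coincides with the affine embedding $\iota$ outside a compact set means $L$ extends uniquely to a smooth spherical link $\widehat L\colon \coprod_i S^{m_i}\hookrightarrow S^d$ by mapping the point at infinity of each $S^{m_i}=\widehat{\rbb^{m_i}}$ to the point at infinity $\infty\in S^d=\widehat{\rdbb}$. This gives a natural map $\pi_0\embnobar\to\pi_0\embsphere$. For its inverse, given a spherical link $L'$, an application of isotopy extension and transitivity of the $\mathrm{Diff}(S^d)$-action on points lets us ambient-isotope $L'$ so that it agrees with $\widehat\iota$ in a chosen neighborhood of $\infty$; removing $\infty$ produces a representative in $\embnobar$. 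Well-definedness of both assignments at the level of $\pi_0$ follows by applying the same extension argument to isotopies between links.

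For the second chain, the already-proved isomorphism $\pi_0\embnobar\simeq\pi_0\embsphere$ restricts to the unknotted subgroups on both sides by naturality, so it remains to show $\pi_0^U\emb\simeq\pi_0^U\embnobar$. This I would deduce from the Smale--Hirsch fiber sequence $\emb\to\embnobar\to\prod_i\Omega^{m_i}V_{m_i}(\rdbb)$, together with its per-component analogue $\overline{\mathrm{Emb}}_c(\rbb^{m_i},\rdbb)\to\mathrm{Emb}_c(\rbb^{m_i},\rdbb)\to\Omega^{m_i}V_{m_i}(\rdbb)$ for each $i$. A componentwise-unknotted long link has trivial image in each $\pi_{m_i}V_{m_i}(\rdbb)$, hence lifts to $\pi_0\emb$; by the compatible diagram the lift moreover lies in $\pi_0^U\emb$, giving surjectivity. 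The ambiguity of the lift is a quotient of $\prod_i\pi_{m_i+1}V_{m_i}(\rdbb)$ acting through the componentwise connecting maps $\pi_{m_i+1}V_{m_i}(\rdbb)\to\pi_0\overline{\mathrm{Emb}}_c(\rbb^{m_i},\rdbb)$; the image of this action therefore projects isomorphically to the kernel of $\pi_0\overline{\mathrm{Emb}}_c(\rbb^{m_i},\rdbb)\to\pi_0\mathrm{Emb}_c(\rbb^{m_i},\rdbb)$ and in particular is trivial when restricted so that the output lies in the unknotted subgroup, giving injectivity.

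For the third isomorphism, the projection $\pi_0\emb\to\prod_i\pi_0\overline{\mathrm{Emb}}_c(\rbb^{m_i},\rdbb)$ has kernel $\pi_0^U\emb$ by definition of the latter, so it suffices to produce a group-theoretic section. Given knots $K_i\in\pi_0\overline{\mathrm{Emb}}_c(\rbb^{m_i},\rdbb)$ for $1\leq i\leq r$, place a representative of each $K_i$ inside a small tubular neighborhood of the $i$-th parallel affine plane $V_i$, in disjoint regions along the concatenation direction, and keep the other components equal to $\iota$; the resulting long link lies in $\pi_0\emb$ and projects back to $(K_1,\ldots,K_r)$. That this assignment is a group homomorphism is immediate from commutativity of concatenation in codimension $\codimlow$ together with the disjointness of the regions used for different components. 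The main obstacle in carrying all this out carefully is the bookkeeping required to match concatenation products across the long and spherical models in part (1) and to check compatibility of the lifting argument in part (2) with the action of $\prod_i\pi_{m_i+1}V_{m_i}(\rdbb)$; each individual step is standard and routine, which is what makes this the \emph{easy exercise} advertised in the statement.
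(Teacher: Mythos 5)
Your treatment of parts (2) and (3) is on the right track, but there is a genuine gap in the first step. The one-point compactification $\widehat L\colon \coprod_{i=1}^r S^{m_i}\to S^d$ that you propose is not an embedding when $r\geq 2$: all $r$ added points are sent to the \emph{same} point $\infty\in S^d$, and in fact the problem is worse because the fixed affine components $\iota(\rbb^{m_i})$ are chosen parallel (each factoring through one of the mutually parallel hyperplanes $V_i\subset\rbb^{m+1}$), so the compactified components are not only non-disjoint at $\infty$ but are tangent there. Thus the assignment $L\mapsto\widehat L$ does not even produce an element of $\embsphere$, and the map $\pi_0\embnobar\to\pi_0\embsphere$ you describe is undefined. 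The statement is of course still true, but the correct construction must first modify the strands near infinity: e.g.\ bend the ends of the $r$ strands so that as $|x|\to\infty$ they leave in $r$ pairwise distinct directions, then compactify and verify that the resulting spheres are disjoint; or equivalently compactify to $D^d$ and cap each $D^{m_i}$ inside a disjoint collar region of $\partial D^d$. Either fix introduces choices that must be shown not to matter on $\pi_0$, and one must also check directly that this closure operation intertwines concatenation of long links with Haefliger's connected-sum product on $\pi_0\embsphere$ --- something the naive compactification would have given for free and which you only flag as ``bookkeeping.''

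The remainder of the argument is reasonable. For part (2), passing $\pi_0^U\embnobar\simeq\pi_0^U\embsphere$ through the (corrected) part (1) is fine, and your use of the product fiber sequence $\emb\to\embnobar\to\prod_i\Omega^{m_i}V_{m_i}(\rdbb)$ together with its per-component analogues to compare $\pi_0^U\emb$ with $\pi_0^U\embnobar$ is the right mechanism; the surjectivity and injectivity claims do require the compatibility of the connecting maps that you assert, and that compatibility rests precisely on the splitting $\mathrm{Imm}_c(\coprod_i\rbb^{m_i},\rdbb)\simeq\prod_i\mathrm{Imm}_c(\rbb^{m_i},\rdbb)$, so you should say so explicitly rather than just invoke ``the compatible diagram.'' Part (3) is correct: the projection to $\prod_i\pi_0\overline{\mathrm{Emb}}_c(\rbb^{m_i},\rdbb)$ has $\pi_0^U\emb$ as kernel by definition, and tying each $K_i$ into the $i$-th strand inside a tubular neighborhood of $V_i$ gives a section that is a monoid map because the constructions for different components happen in disjoint regions. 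Once part (1) is repaired, the proposal becomes a complete proof.
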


By~\cite{haefliger} and Corollary~\ref{cor:r=1}, $\pi_0\embsphere$ and $\pi_0\overline{\mathrm{Emb}}_c(\rbb^{m_i},\rdbb)$
are finitely generated abelian groups, thus $\pi_0\emb$ is so as well.

Define the groups 
\begin{gather*}
\bigpibar:= \pi_{d-2}(\vee_{i=1}^r S^{d-m_i-1});\\
\biglbar:=\bigoplus_{i=1}^r\pi_{m_i}(\vee_{i=1}^r S^{d-m_i-1});\\
\bigkbar:=\mathrm{ker}(\biglbar\stackrel{q_0}{\longrightarrow}\bigpibar),
\end{gather*}
here $q_0\colon(\alpha_1\ldots \alpha_r)\mapsto \sum_{i=1}^r[\iota_i,\alpha_i]$, where $[\,.\, ,\, .\, ]$ is the Whitehead bracket,
$\iota_i\in\pi_{d-m_i-1}(\vee_{i=1}^r S^{d-m_i-1})$ is the element encoding the inclusion of the $i$-th summand
$S^{d-m_i-1}\hookrightarrow \vee_{i=1}^r S^{d-m_i-1}$. We also define their subgroups
\begin{gather*}
\bigpi:=\mathrm{ker}\left( \pi_{d-2}(\vee_{i=1}^r S^{d-m_i-1}) \to \bigoplus_{i=1}^r \pi_{d-2} S^{d-m_i-1}\right);\\
\bigla:=\bigoplus_{i=1}^r\mathrm{ker}\left(\pi_{m_i}(\vee_{i=1}^r S^{d-m_i-1})\to \pi_{m_i} S^{d-m_i-1}\right);\\
\bigk:=\mathrm{ker}(\bigla\stackrel{q_0|_{\Lambda}}{\longrightarrow}\bigpi).
\end{gather*}

\begin{lem}\label{l:low_all}
For $\codimlow$, 
\begin{eqnarray}\label{eq:low_all}
\qbb\otimes\pi_0\emb\simeq\qbb\otimes\bigkbar\simeq H_0 \mathcal{E}_{0\pi}^{m_1, \cdots, m_r; d}.
\end{eqnarray}
\end{lem}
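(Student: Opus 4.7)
The proof splits naturally into two parts corresponding to the two isomorphisms of~\eqref{eq:low_all}.

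For the algebraic isomorphism $\qbb\otimes\bigkbar\simeq H_0\mathcal{E}_{0\pi}^{m_1,\ldots,m_r,d}$, my strategy would be to invoke Hilton's theorem together with Lemma~\ref{l:lie}. Hilton's theorem rationally identifies $\qbb\otimes\pi_*(\vee_{j=1}^r S^{d-m_j-1})$ with the (shifted) free graded Lie algebra $\mathbb{L}(w_1,\ldots,w_r)$ on generators $w_j$ of degree $d-m_j-2$. Under this identification, $\qbb\otimes\biglbar$ and $\qbb\otimes\bigpibar$ correspond to specific degree components of $\mathbb{L}$, and the Whitehead bracket map $q_0$ becomes the sum-of-brackets map $(\alpha_i)_i\mapsto\sum_i[w_i,\alpha_i]$. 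A multi-graded application of Lemma~\ref{l:lie} (arity by arity) then identifies $\ker q_0$ with exactly the component of $\mathbb{L}$ that, by Lemma~\ref{l:hempi0}, is isomorphic to $H_0\mathcal{E}_{0\pi}^{m_1,\ldots,m_r,d}$. Careful bookkeeping of Koszul signs is needed, but the overall correspondence is clean.

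For the geometric isomorphism $\qbb\otimes\pi_0\emb\simeq\qbb\otimes\bigkbar$, my plan is to use Haefliger's classification of higher-dimensional links together with its codimension-$3$ refinements. By Lemma~\ref{l:linksU}, $\pi_0\emb\simeq\pi_0^U\emb\times\prod_{i=1}^r\pi_0\overline{\mathrm{Emb}}_c(\rbb^{m_i},\rdbb)$, and rationally this matches $\qbb\otimes\bigkbar\simeq\qbb\otimes\bigk\oplus\qbb\otimes(\bigkbar/\bigk)$. For the linking factor $\pi_0^U\emb\simeq\pi_0^U\embsphere$, an unknotted link assigns to each component $S^{m_i}$ a homotopy class of its embedding into the complement of the others. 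The complement is $\simeq\vee_{j\neq i}S^{d-m_j-1}$ by Alexander duality, yielding an element of the corresponding summand of $\bigla$. The compatibility condition (from the link sitting in $S^d$) is precisely the Whitehead bracket obstruction $\sum_i[\iota_i,\alpha_i]=0$, placing the resulting image in $\bigk$. Haefliger's theorem~\cite{haefliger} and its refinements~\cite{cr_fer_skop} then assert that this map $\pi_0^U\embsphere\to\bigk$ is rationally an isomorphism. The individual knotting factors are handled by Corollary~\ref{cor:r=1}, identifying $\qbb\otimes\pi_0\overline{\mathrm{Emb}}_c(\rbb^{m_i},\rdbb)$ with the appropriate summand of $\qbb\otimes(\bigkbar/\bigk)$.

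The main obstacle will be the geometric step: verifying Haefliger's classification in the full range $\codimlow$, which lies below the classical Haefliger range $d\geq 2m_i+2$. This requires the codimension-$3$ refinements developed in~\cite{cr_fer_skop}. The algebraic part should involve only routine bookkeeping once Hilton's theorem and Lemma~\ref{l:lie} are set up.
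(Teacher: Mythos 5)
Your proposal is essentially the same as the paper's argument: both split $\pi_0\emb$ via Lemma~\ref{l:linksU}, invoke~\cite{cr_fer_skop} for the unknotted linking part and Corollary~\ref{cor:r=1} for the knotting factors, and then use the free graded Lie algebra model of $\qbb\otimes\pi_*(\vee S^{d-m_i-1})$ together with Lemmas~\ref{l:lie} and~\ref{l:hempi0} for the algebraic identification with $H_0\mathcal{E}_{0\pi}^{m_1,\ldots,m_r,d}$. The only small point to be careful about is your step $\qbb\otimes(\bigkbar/\bigk)\cong\bigoplus_i\qbb\otimes\overline{K}^{m_id}$, which as in the paper requires the observation that $q_0$ is compatible with the wedge-summand decompositions $\biglbar=\bigla\oplus\bigoplus_i\overline{\Lambda}^{m_id}$ and $\bigpibar=\bigpi\oplus\bigoplus_i\overline{\Pi}^{m_id}$, so that $\bigkbar$ actually splits as $\bigk\oplus\bigoplus_i\overline{K}^{m_id}$; your Alexander-duality heuristic for the Haefliger map is informal and should not be presented as the proof, since the precise input is already the rational isomorphism of~\cite[Lemma~1.3]{cr_fer_skop}.
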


\begin{proof}
We  prove below the second isomorphism in~\eqref{eq:low_all} for any $r\geq 1$. Thus, by Corollary~\ref{cor:r=1} it\rq{}s
enough to show the first isomorphism only for $r\geq 2$. In~\cite{haefliger}, A.~Haefliger develops a homotopical approach
to compute $\pi_0\embsphere$. Using this method it was shown in~\cite[Lemma~1.3]{cr_fer_skop} that
\[
\qbb\otimes \pi_0^U\embsphere=\qbb\otimes \bigk.
\]
Applying Lemma~\ref{l:linksU}, we get
\begin{multline*}
\qbb\otimes\pi_0\emb\simeq\qbb\otimes \pi_0^U\emb\oplus\bigoplus_{i=1}^r
 \qbb\otimes\pi_0\overline{\mathrm{Emb}}_c(\rbb^{m_i},\rdbb)\\
 \simeq \qbb\otimes\bigk\oplus\bigoplus_{i=1}^r \qbb\otimes\overline{K}^{m_id}
 \simeq \qbb\otimes\bigkbar.
 \end{multline*}
 The last isomorphism is due to the
 splittings
\begin{gather*}
\bigpibar=\bigpi\oplus \bigoplus_{i=1}^r \pi_{d-2} S^{d-m_i-1} = \bigpi\oplus\bigoplus_{i=1}^r \overline{\Pi}^{m_id};\\
\biglbar=\bigla\oplus\bigoplus_{i=1}^r \pi_{m_i} S^{d-m_i-1} = \bigla\oplus\bigoplus_{i=1}^r \overline{\Lambda}^{m_id},
\end{gather*}
and also the fact that  $q_0$ sends each summand of $\biglbar$ to the corresponding summand of $\bigpibar$.  

To see the second isomorphism of~\eqref{eq:low_all}, we notice that
\[
\qbb\otimes \pi_*(\vee_{i=1}^r S^{d-m_i-1})\simeq \Sigma F_{{{\mathcal L}ie}(\bullet)}(\tilde H_*(\vee_{i=1}^r S^{d-m_i-2},\qbb)).
\]
Thus $\qbb\otimes\bigpibar$ is the degree $d-3$ part of $F_{{{\mathcal L}ie}(\bullet)}(\tilde H_*(\vee_{i=1}^r S^{d-m_i-2},\qbb))$. Similarly, one can see that $\qbb\otimes\biglbar$ is the degree $d-3$ part of
$F_{\mathrm{Ind}^{\Sigma_\bullet}_{\Sigma_{\bullet-1}}\lieminus}(\tilde H_*(\vee_{i=1}^r S^{d-m_i-2},\qbb))$. Thus
$\bigkbar$ is the degree $d-3$ part of
\[
\mathrm{ker}\left(F_{\mathrm{Ind}^{\Sigma_\bullet}_{\Sigma_{\bullet-1}}\lieminus}(\tilde H_*(\vee_{i=1}^r S^{d-m_i-2},\qbb))
\stackrel{F_q}{\longrightarrow}
 F_{{{\mathcal L}ie}(\bullet)}(\tilde H_*(\vee_{i=1}^r S^{d-m_i-2},\qbb))\right),
\]
which by Lemmas~\ref{l:lie} and~\ref{l:hempi0} is the same as $H_0 \mathcal{E}_{0\pi}^{m_1, \cdots, m_r; d}$.

\end{proof}

\section{The Koszul complexes} \label{generating_function_homology}

By Theorems~\ref{rational_homology_thm} and~\ref{rational_homotopy_thm}, the rational homology and homotopy of $\embthm$ is described  in terms  of derived maps of right $\Omega$-modules. In this section we  exhibit explicit complexes computing the (co)homology and homotopy of $\embthm$ by considering the projective resolution of the source $\Omega$-modules 
in~\eqref{final_splitting_H} and~\eqref{final_splitting_pi}.  We call these complexes {\it Koszul} as they are constructed using operadic language and the Koszul duality between the commutative and Lie operads. At the end of the section we show that the Koszul complexes can  also be used to understand rationally the stages of the Goodwillie-Weiss towers for $\emb$.

%


For $s_1, \cdots, s_r \geq 0$, recall the definition of the right $\Omega$-module $Q_{s_1 \cdots s_r}^{m_1 \cdots m_r}$ (it was introduced in  (\ref{qs1sr})).  

\begin{eqnarray} \label{ps1sr}
Q_{s_1 \cdots s_r}^{m_1 \cdots m_r}(k) = \left\{ \begin{array}{lll}
                                 0 & \mbox{if} & k \neq s_1+\cdots+s_r \\
																\mbox{Ind}^{\Sigma_k}_{\Sigma_{s_1} \times \cdots \times \Sigma_{s_r}} \widetilde{H}_*(S^{s_1m_1+\cdots+s_rm_r} ; \mathbb{Q}) & \mbox{if} & k = s_1+\cdots+s_r
                                \end{array} \right.
\end{eqnarray}

\begin{prop} \label{rational_homology_prop2}
For $d > 2 \mathrm{max}\{m_i| \ 1 \leq i \leq r\} + 1$, one has quasi-isomorphisms 
\begin{eqnarray} \label{rational_homology_quasi-iso2}
C_*(\embthm, \otimes \mathbb{Q}) \simeq \bigoplus_{s_1, \cdots, s_r, t \geq 0} \underset{\Omega}{\mathrm{hRmod}}  (Q^{m_1\cdots m_r}_{s_1 \cdots s_r}, \widehat{H}_{t(d-1)} (C(\bullet, \rdbb); \mathbb{Q}))
\end{eqnarray} 
\begin{eqnarray} \label{rational_homotopy_quasi-iso2}
\ \pi_*\embthm\otimes\qbb\simeq \bigoplus_{s_1, \cdots, s_r, t \geq 0} \underset{\Omega}{\mathrm{hRmod}}  (Q^{m_1\cdots m_r}_{s_1 \cdots s_r}, \widehat{\pi}_{t(d-2)+1} C(\bullet, \rdbb)\otimes \mathbb{Q})
\end{eqnarray} 
\end{prop}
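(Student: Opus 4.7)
My plan is to carry out rigorously the formal manipulation already sketched in the introduction that leads to~\eqref{final_splitting_H} and~\eqref{final_splitting_pi}. Starting from Theorems~\ref{rational_homology_thm} and~\ref{rational_homotopy_thm}, I substitute the Hodge decomposition~\eqref{splitting_htilde}
\[
\widetilde{H}_*((\vee_{i=1}^r S^{m_i})^{\wedge\bullet},\qbb)\cong\bigoplus_{s_1,\ldots,s_r\geq 0}Q^{m_1\cdots m_r}_{s_1\cdots s_r}
\]
in the source position and the complexity decompositions~\eqref{splitting_hhat_H},~\eqref{splitting_hhat_Pi} in the target. Since $\rmod(-,-)$ converts direct sums in the first variable into products of mapping complexes and preserves products in the second variable, this yields natural quasi-isomorphisms from the right-hand sides of~\eqref{rational_homology_iso} and~\eqref{rational_homotopy_iso} onto the infinite products
\[
\prod_{s_1,\ldots,s_r,t\geq 0}\rmod\bigl(Q^{m_1\cdots m_r}_{s_1\cdots s_r},\widehat H_{t(d-1)}(C(\bullet,\rdbb);\qbb)\bigr)
\]
and the analogue with $\widehat\pi_{t(d-2)+1}C(\bullet,\rdbb)\otimes\qbb$ in place of the homology.

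The only substantive point remaining is to upgrade each product into a direct sum. For this I invoke the hairy graph-complex models $\emh$ and $\empi$ of Section~\ref{graphs_complexes_emh-empi}, together with Theorems~\ref{emh_hrmod_thm} and~\ref{empi_hrmod_thm}, which identify these complexes with the global derived mapping spaces. By construction the splittings of $\emh$ and $\empi$ according to the Hodge vector $(s_1,\ldots,s_r)$ (via the coloring $f_G$, with $s_i=|f_G^{-1}(i)|$) and according to the complexity $t$ (via Remark~\ref{rm:complexity_homol}) correspond to the algebraic splittings above. It therefore suffices to show that only finitely many tuples $(s_1,\ldots,s_r,t)$ support a generator in any fixed homological degree and that each piece is itself finite-dimensional in each degree.

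Both assertions follow from a positivity-of-degree estimate per connected component. Using $\deg G=(d-1)|E_G|-d|V_G^I|-\sum_i m_i s_i$, the Euler relation $|E_G|=|V_G^I|+\sum_i s_i+g-1$, and the valence bound $|V_G^I|\leq\sum_i s_i+2g-2$, one obtains for each connected component of genus $g$ the lower bound
\[
\deg(\text{component})\geq\sum_{i=1}^r(d-m_i-2)s_i^{(\text{comp})}+(d-3)(g-1),
\]
which under the codimension hypothesis $d>2\max\{m_i\}+1$ is strictly positive on any admissible component (one with at least one external vertex). Summing over components and using $t=\sum_j g_j+\sum_i s_i-c$, where $c$ is the number of components, the total degree grows linearly in both $\sum_i s_i$ and $t$, forcing the required finiteness in each fixed homological degree; this is analogous to the degree estimates in~\cite[Remark~13.3]{aro_tur12} and~\cite[Section~5]{aro_tur13}. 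The main obstacle is exactly this positivity estimate — once it is in place, the product collapses to a direct sum in each degree and~\eqref{rational_homology_quasi-iso2},~\eqref{rational_homotopy_quasi-iso2} follow at once.
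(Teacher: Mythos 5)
Your proposal follows exactly the same route as the paper: split the source module by the Hodge multidegree and the target by complexity, observe that the derived mapping space turns the source direct sum into a product, and then collapse the product to a direct sum by finite-dimensionality of the hairy graph-complex in each degree (the paper's proof does precisely this, citing Remark~\ref{rmq:finite_dim}, which in turn defers the degree estimate to references). Your only addition is that you carry out the degree estimate explicitly (and your Euler-characteristic and valence bounds, together with the formula $t=\sum_j g_j+\sum_i s_i-c$, are all correct), whereas the paper leaves it as a citation; this makes your argument slightly more self-contained but not substantially different.
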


\begin{proof}
This is a direct consequence of  Theorems~\ref{rational_homology_thm} and~\ref{rational_homotopy_thm} and the fact that both the source and target $\Omega$-modules in~\eqref{rational_homology_iso} and~\ref{rational_homotopy_iso} split into a direct
sum/product.
 The only thing we need to understand is why the product can be replaced by the direct sum. This follows for example, from Remark~\ref{rmq:finite_dim}.
\end{proof}


\subsection{The Koszul resolution  and Koszul dual of  
$Q_{s_1 \cdots s_r}^{m_1 \cdots m_r}$} \label{koszul_dual_right_module}

In this section we will find a cofibrant replacement (the model structure we use in the category $\underset{\Omega}{\mbox{Rmod}}$ is the one whose weak equivalences and fibrations are levelwise), denoted by $C\qms$, of $\qms$. We cal this cofibrant replacement 
{\it Koszul resolution} of $\qms$. The idea of this construction is to use the fact that the right $\Omega$-module structure is
the same as the right module structure over the commutative non-unital operad ${\mathcal C}om_+$ and then use the Koszul duality
between the commutative and Lie  operads.
We will follow exactly the same steps as in \cite[Section 5 and Section 6]{aro_tur13}, where this was done for $r=1$.  
  As an $\Omega$-module, $C\qms$ is freely generated by a symmetric sequence that we denote by $K\qms$ and we call it
  {\it Koszul dual} of $\qms$.\footnote{One should not confuse the terms {\it Koszul complexes}, which are  complexes computing the (co)homology or homotopy of embedding spaces, {\it Koszul resolutions}, which refer to a cofibrant replacement of $\Omega$-modules, and  {\it Koszul duals} which refer to symmetric  sequences (in fact to right $co{\mathcal L}ie[1]$-comodules).} We will also explicitly describe $C \qms$ (and then explicitly describe $K \qms$) in terms of  spaces of colored forests. 

Let us start with an explicit description of $C\qms$. For $r =1$, $\qms$ is exactly the right $\Omega$-module $Q^m_s$ that appears in ~\cite[Section~5]{aro_tur13}, where $CQ^m_s$ is explicitly described in terms of forests. Here we will describe $C \qms$ in a similar way, except that in our case we will get "colored forests" with $\{1, \cdots, r\}$ as the set of colours. 

Recall first two standard notations. The first one is the operation "$\circ$" (explicitly defined in~\cite[Chapter 5]{lod_val12}) in the category of symmetric sequences in chain complexes~\cite[\S~5.1]{lod_val12}. Intuitively, if $M = \{M(n)\}_{n \geq 0}$ and $N = \{N(n)\}_{n \geq 0}$ are two symmetric sequences, elements in $M \circ N (n)$ are of the form $x (y_1, \cdots, y_k)$, where $x \in M(k)$, $y_i \in N(i_k)$, and $i_1+\cdots+i_k =n$ (one should think about an element in $M(k)$ as an operation of arity $k$).  
In fact one can define "$\circ$" in the category of symmetric sequences in any cocomplete symmetric monoidal category. This operation defines a monoidal structure on symmetric sequences. The unit for this structure is the sequence $\mathbf{1}$, defined as $\mathbf{1}(1)$ is the unit and  every $\mathbf{1}(n)$, $n\neq 1$,  is the initial element.  Monoids with respect to $\circ$ are operads. The second notation is that of suspension of a symmetric sequence. Let $M = \{M(n)\}_{n \geq 0}$ be a symmetric sequence in chain complexes. The \textit{operadic  suspension}, denoted by $M[1]$, of $M$ is a symmetric sequence defined by 
\[M[1](n) = \Sigma^{n-1} M(n) \otimes \mbox{sign}_n,\]
where $\Sigma^{n-1}$ is the usual $(n-1)-$ suspension, and $\mbox{sign}_n$ is the sign representation of $\Sigma_n$. The operadic suspension of an operad is again an operad because of the identities $\mathbf{1}[1]= \mathbf{1}$ and $(M \circ N)[1] = M[1] \circ N[1]$.

Given a  right module $\mathcal{R}$ over any Koszul operad $\mathcal{K}$,  Fresse defines in \cite{fress08} a cofibrant replacement, denoted by $C\mathcal{R}$, of $\mathcal{R}$. If $\mathcal{K}^{\rotatebox[origin=c]{180}{!}}$ denotes the cooperad Koszul dual to $\mathcal{K}$, as a symmetric sequence $C\mathcal{R} = \mathcal{R} \circ \mathcal{K}^{\rotatebox[origin=c]{180}{!}}[1] \circ \mathcal{K}$. Its differential consists of three summands. The first one is induced by the internal differential on  $\mathcal{R}$. The second one arises from the right $\mathcal{K}$ module structure of $\mathcal{R}$. The third one arises from the left  $\mathcal{K}$ module structure of  $\mathcal{K}$. 
One can see that $C\mathcal{R}$ as a right $\mathcal{K}$ module is freely generated by the symmetric sequence $K\mathcal{R}
= \mathcal{R} \circ \mathcal{K}$. This sequence $K\mathcal{R}$ with its natural differential is sometimes called {\it  Koszul dual}
of $\mathcal{R}$. It naturally carries the structure of a right $\mathcal{K}^{\rotatebox[origin=c]{180}{!}}[1]$ 
comodule.\footnote{Here, Koszul duality refers to  the equivalence of the homotopy category of right $\mathcal{K}$-modules
and that of right $\mathcal{K}^{\rotatebox[origin=c]{180}{!}}[1]$-comodules.}
  Applying Fresse's construction to $\qms$, viewed as a right module over ${{\mathcal C}om}_+$ (here the action of  ${{\mathcal C}om}_+$ on $\qms$ is trivial), and by noticing that  the cooperad Koszul dual  of ${{\mathcal C}om}_+$ is ${co{\mathcal L}ie}$
  (which is  is the dual of the operad $\mathcal{L}ie$ of Lie algebras),  we get 
\begin{eqnarray} \label{qms_defn} 
C \qms = \qms \circ {co{\mathcal L}ie} [1] \circ {{\mathcal C}om}_+.
\end{eqnarray}
 For obvious reasons only the third summand in the differential will be non-trivial. The equation (\ref{qms_defn}) allows us to explicitly describe the right $\Omega$-module $C\qms$. Recall that the cooperad ${co{\mathcal L}ie}$ can be described as a space of trees modulo Arnold (3 term) relations~\cite{sinha}.  Let $A$ be a finite set of cardinal $k$. Then $C \qms (A)$ or simply $C \qms (k)$ is a chain complex spanned by the following set $\mathcal{F}_{s_1 \cdots s_r}^{m_1 \cdots m_r}(k)$ of \textit{oriented colored  forests}: 
\begin{enumerate}
\item[$\bullet$] A \textit{vertex} in such forest $F$ is a nonempty subset of $A$. The set of vertices forms a partition $\coprod_i A_i$ of $A$. 
\item[$\bullet$] Vertices of $F$, labeled by subsets $A_i$, are connected by edges so that no cycles appear, or, in other words, the obtained graph is a forest.  
\item[$\bullet$] Such forest $F$ must consist of exactly $s_1 + \cdots +s_r$ trees. We denote by $T_F$ the set of its connected
components.
\item[$\bullet$] A \textit{colored forest} is a couple $(F, f)$, where $F$ is a forest as above, and $f \colon T_F \lra \{1, \cdots, r\}$ is a  map (called coloring) from the set of trees in $F$ to the set of colours $\{1, \cdots, r\}$ such that for all $i = 1, \cdots, r$, $|f^{-1}(i)| = s_i$. 
\item[$\bullet$] An \textit{oriented colored forest} or an element in $\mathcal{F}_{s_1 \cdots s_r}^{m_1 \cdots m_r}(k)$  is a triple $(F, f, O)$, where $(F,f)$ is a colored forest, and $O = T_F \cup E_F$ (here $E_F$ denotes the set of all edges in $F$) is endowed with a total order $\leq$. The set $O$ is the so called \textit{orientation set}.   
\end{enumerate}

We define a degree of an element of $\mathcal{F}_{s_1 \cdots s_r}^{m_1 \cdots m_r}(k)$. Let  $(F, f, O)$ be any element in $\mathcal{F}_{s_1 \cdots s_r}^{m_1 \cdots m_r}(k)$.  Each edge of $F$ is of degree~$1$. A tree  $T$ in $F$ is of degree $m_i$ if $T \in f^{-1}(i)$. The total degree of $(F, f, O)$ is then
\begin{eqnarray} \label{total_degree}
\mbox{deg}(F, f, O) = |E_F| + \underset{1 \leq i \leq r}{\sum} s_im_i.
\end{eqnarray} 

 Let $\qbb [\mathcal{F}_{s_1 \cdots s_r}^{m_1 \cdots m_r}(k)]$ denote the graded vector space over $\qbb$ spanned by $\mathcal{F}_{s_1 \cdots s_r}^{m_1 \cdots m_r}(k)$. We consider two relations in $\qbb [\mathcal{F}_{s_1 \cdots s_r}^{m_1 \cdots m_r}(k)]$. The first one is the sign relation denoted by $sign$. More precisely, let $(F_1, f_1, O_1)$ and $(F_2, f_2, O_2)$ be two forests.  We write $(F_1, f_1, O_1) = \pm (F_2, f_2, O_2)$ if $(F_1, f_1, O_1)$  differs from $(F_2, f_2, O_2)$ only by reordering of the orientation set.  The sign is the Koszul sign of permutation taking into account the degrees of the elements.  The second relation is the Arnold relation, denoted by $Arnold$, which is of the form

%
%

\begin{center}
\begin{tabular}{c}
\includegraphics[scale = 1.0]{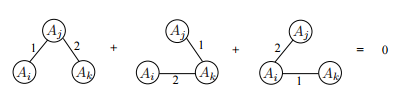} \\
Arnold's relation
\end{tabular}
\end{center}

This relation is local, that is, all other vertices and edges are the same in each summand. Finally there is  a differential in $\qbb [\mathcal{F}_{s_1 \cdots s_r}^{m_1 \cdots m_r}(k)]$ defined as follows. For an edge $e \in 	E_F$, let $(F/e, f, O\backslash \{e\})$ be the forest obtained from $(F, f, O)$ by contracting $e$. Recall that every vertex in $F$ is labeled by some subset $A_i\subset A$.
When an edge between two sets $A_i$ and $A_j$ is contracted, the new vertex is assigned the set $A_i\sqcup A_j$. Define a sign $(-1)^e \in \{-1,1\}$ by $(-1)^e = (-1)^{\sum_{x < e} |x|}.$ The differential of $(F, f, O)$ is defined by 
\[d(F, f, O) = \sum_{e \in E_F} (-1)^e (F/e, f, O \backslash \{e\})\] 

%
%

 The sequence $\left\{\frac{\qbb [\mathcal{F}_{s_1 \cdots s_r}^{m_1 \cdots m_r}(k)]}{sign,\ Arnold} \right\}_{k \geq 0}$ is equipped with a structure of a right $\Omega$-module defined as follows. Let $g \colon A' \lra A$ be a surjective map from a finite set $A'$ (of cardinal $k'$) to $A$ (of cardinal $k$). Define 
$$g^* \colon \frac{\qbb [\mathcal{F}_{s_1 \cdots s_r}^{m_1 \cdots m_r}(A)]}{sign, Arnold} \lra \frac{\qbb [\mathcal{F}_{s_1 \cdots s_r}^{m_1 \cdots m_r}(A')]}{sign, Arnold}$$
as the map sending a forest $(F, f, O)$ to the forest $g^*(F, f, O)$ obtained by replacing each vertex  of $F$ labeled by a subset $A_i \subseteq A$ by a vertex labeled by $A'_i=g^{-1}(A_i)$. The orientation and the coloration of $g^*(F, f, O)$ remain unchanged. It is easy to see that 
the right $\Omega$-module $C\qms$ defined by~\eqref{qms_defn} is exactly $\frac{\qbb [\mathcal{F}_{s_1 \cdots s_r}^{m_1 \cdots m_r}]}{sign, Arnold}$. We reiterate, one just need to recall and apply the description of ${co{\mathcal L}ie}$ in terms
of spaces of trees modulo Arnold relations~\cite{sinha}. 


Now we will describe the Koszul dual $K \qms$ of $\qms$. It is defined as 
\[ K \qms = \qms \circ {co{\mathcal L}ie}[1]. \] 
So $C \qms$, viewed as a right ${{\mathcal C}om}_+$ module (we can view it like this because of (\ref{qms_defn}) above), is freely generated by the symmetric sequence $K \qms$. By  definition we have an inclusion $K \qms \subseteq C \qms$, and generators of $K \qms$ are forests $(F, f, O)$ whose all vertices are singletons.  For each $k$, $K \qms (k)$ is concentrated in the single homological degree 
\[ \sum_{i=1}^r m_is_i + \sum_{i=1}^r (k_i-s_i) = \sum_{i=1}^r (s_i(m_i-1)+k_i)=
\sum_{i=1}^r s_i(m_i-1) +k, \] 
where $k_i$ is the number of vertices colored by $i$ (so $k= k_1 + k_2 + \cdots + k_r$). 

The Koszul dual $K \qms$ is isomorphic to a symmetric sequence related to the homology of configuration spaces as we will see in Proposition~\ref{koszul_dual_prop} below. Before stating this proposition we recall some notation.  For a pointed topological space $X$, and for $p \geq 1$, let $\Delta^pX$ denote the "fat diagonal" in $X^{\wedge p} = \underbrace{X \wedge \cdots \wedge X}_{p}$ (recall that the symbol "$\wedge$" is that of the smash product). More precisely, we have 
$$\Delta^pX = \underset{1 \leq i \neq j  \leq p}{\bigcup} \{(x_1, \cdots, x_p) \in X^{\wedge p}| \ x_i =x_j\}.$$

\begin{prop} \label{koszul_dual_prop}
For $s_1, \cdots, s_r \geq 0$, and for $k \geq 0$ there is an isomorphism of symmetric sequences

\begin{eqnarray} \label{koszul_dual_iso}
 K \qms (\bullet) \cong \underset{\bullet = k_1+\cdots + k_r}{\bigoplus} \left( \mathrm{Ind}^{\Sigma_k}_{\Sigma_{\overline k}}  \bigotimes_{i=1}^r \widetilde{H}_{(m_i-1)s_i +k_i}(S^{m_ik_i} / \Delta^{k_i}S^{m_i}, \qbb) \right),  
\end{eqnarray}
where 
 $\Sigma_{\overline k} = \Sigma_{k_1} \times \cdots \times \Sigma_{k_r}$. 

\end{prop}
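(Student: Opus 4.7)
My plan is to compute $K\qms=\qms\circ\mathrm{coLie}[1]$ directly via the composition formula for symmetric sequences and then to reduce the resulting expression to a product of single-color contributions already handled in the $r=1$ case of~\cite{aro_tur13}. Because $\qms(n)$ vanishes unless $n=s:=s_1+\cdots+s_r$, only the arity-$s$ term of $\qms$ contributes, giving
\[
K\qms(k)=\qms(s)\otimes_{\Sigma_{s}}\bigoplus_{n_1+\cdots+n_s=k}\mathrm{Ind}^{\Sigma_k}_{\Sigma_{n_1}\times\cdots\times\Sigma_{n_s}}\bigotimes_{j=1}^{s}\mathrm{coLie}[1](n_j).
\]
The definition $\qms(s)=\mathrm{Ind}^{\Sigma_s}_{\Sigma_{s_1}\times\cdots\times\Sigma_{s_r}}\bigotimes_{i=1}^{r}\widetilde{H}_{m_is_i}((S^{m_i})^{\wedge s_i},\qbb)$ together with Frobenius reciprocity lets me replace the $\Sigma_s$-coinvariants by $\Sigma_{s_1}\times\cdots\times\Sigma_{s_r}$-coinvariants, after restricting the big sum to this Young subgroup.

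The next step is to reorganize the restriction by color. Restricting amounts to splitting the $s$-tuple $(n_1,\ldots,n_s)$ into $r$ subtuples of sizes $s_1,\ldots,s_r$; writing the color-$i$ subtuple as $(n_1^{(i)},\ldots,n_{s_i}^{(i)})$ and setting $k_i=\sum_{j=1}^{s_i}n_j^{(i)}$, the induction from $\prod_j\Sigma_{n_j}$ to $\Sigma_k$ factors as $\mathrm{Ind}^{\Sigma_k}_{\prod_i\Sigma_{k_i}}\circ\bigotimes_i\mathrm{Ind}^{\Sigma_{k_i}}_{\prod_j\Sigma_{n_j^{(i)}}}$. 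Summing first over $(k_1,\ldots,k_r)$ with $k_1+\cdots+k_r=k$, I will obtain
\[
K\qms(k)\cong\bigoplus_{k_1+\cdots+k_r=k}\mathrm{Ind}^{\Sigma_k}_{\Sigma_{\overline k}}\bigotimes_{i=1}^{r}A_i,
\]
where
\[
A_i=\widetilde{H}_{m_is_i}((S^{m_i})^{\wedge s_i},\qbb)\otimes_{\Sigma_{s_i}}\bigoplus_{\sum_j n_j^{(i)}=k_i}\mathrm{Ind}^{\Sigma_{k_i}}_{\prod_{j}\Sigma_{n_j^{(i)}}}\bigotimes_{j=1}^{s_i}\mathrm{coLie}[1](n_j^{(i)}).
\]
By inspection, $A_i$ is exactly the single-color Koszul dual $KQ^{m_i}_{s_i}(k_i)$ with parameters $(m_i,s_i,k_i)$.

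The final and main step is to identify
\[
A_i\;\cong\;\widetilde{H}_{(m_i-1)s_i+k_i}(S^{m_ik_i}/\Delta^{k_i}S^{m_i},\qbb),
\]
which is the monochromatic version of the proposition already known from~\cite[\S6]{aro_tur13}. Geometrically it rests on the identification $(S^{m_i})^{\wedge k_i}/\Delta^{k_i}S^{m_i}\simeq F(\mathbb{R}^{m_i},k_i)^+$, Poincar\'e--Lefschetz duality $\widetilde{H}_j(F(\mathbb{R}^{m_i},k_i)^+,\qbb)\cong H^{m_ik_i-j}(F(\mathbb{R}^{m_i},k_i),\qbb)$, and the Cohen--Ginzburg--Kapranov description of $H^*(F(\mathbb{R}^{m_i},k_i),\qbb)$ as the symmetric-sequence composition $\mathrm{Com}\circ\mathrm{Lie}\{m_i-1\}$, which splits the relevant cohomology degree over partitions of $\{1,\ldots,k_i\}$ into exactly $s_i$ blocks with a Lie-factor on each block. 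I expect the main obstacle to be a careful bookkeeping of signs and degree shifts: the $[1]$ in $\mathrm{coLie}[1]$ contributes a factor $\Sigma^{n-1}\otimes\mathrm{sign}_n$ that must combine correctly with the one-dimensional sign character $\mathrm{sign}_{s_i}^{m_i}$ carried by $\widetilde{H}_{m_is_i}((S^{m_i})^{\wedge s_i},\qbb)$ to yield the orientation of the one-point compactification. Once this $r=1$ identification is in place, assembling colors through the display above produces precisely the claimed isomorphism.
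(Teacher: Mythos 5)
Your proposal is correct and reaches the same conclusion via essentially the same chain of identifications that the paper uses: the equality $\widetilde{H}_*(S^{m_ik_i}/\Delta^{k_i}S^{m_i},\qbb)\cong\overline{H}_*(C(k_i,\rbb^{m_i}),\qbb)$, Poincar\'e duality, and the Arnold--Cohen forest description of $H^*(C(k,\rbb^m),\qbb)$. Where the paper simply compares the forest basis of $K\qms$ (established two paragraphs earlier in \S\ref{koszul_dual_right_module}) directly against the forest basis of configuration-space cohomology, you instead unfold the composition $\qms\circ\mathrm{coLie}[1]$ term by term, use Frobenius reciprocity and transitivity of induction to isolate a monochromatic factor $A_i\cong KQ^{m_i}_{s_i}(k_i)$ per color, and then invoke the $r=1$ identification from~\cite{aro_tur13}. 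That is a legitimate and slightly more formal route; the advantage is that the color bookkeeping (the outer $\mathrm{Ind}^{\Sigma_k}_{\Sigma_{\overline k}}$ and the resulting $(k_1,\ldots,k_r)$-splitting) emerges mechanically from the transitivity of induction rather than being read off from the forest picture. The one place you should be explicit when writing this up is the coset-representative choice in the factorization $\mathrm{Ind}^{\Sigma_k}_{\prod_j\Sigma_{n_j}}=\mathrm{Ind}^{\Sigma_k}_{\Sigma_{\overline k}}\circ\mathrm{Ind}^{\Sigma_{\overline k}}_{\prod_j\Sigma_{n_j}}$ (you need the subgroups nested, which requires relabeling the $n_j$'s so that each color block occupies a consecutive interval of $\{1,\ldots,k\}$), and the interaction of the $\mathrm{sign}_{s_i}^{\otimes m_i}$ twist from $\widetilde{H}_{m_is_i}((S^{m_i})^{\wedge s_i})$ with the operadic suspension in $\mathrm{coLie}[1]$ --- the same sign-tracking that appears in~\eqref{poincare_duality}. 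As you already flagged these as the delicate points, there is no gap.
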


\begin{proof}
The first thing to note is the following. For $1 \leq i \leq r$ one has
\begin{eqnarray} \label{htilde_hbar}
\widetilde{H}_*(S^{m_ik_i} / \Delta^{k_i} S^{m_i}, \qbb) = \overline{H}_*(C(k_i, \rbb^{m_i}), \qbb),
\end{eqnarray}
where $\overline{H}_*(-)$ is the locally compact singular homology functor.  The rest of the proof uses the Poincar\'e duality between $\overline{H}_*(C(k, \rbb^m), \qbb)$ and $H^{mk-*}(C(k, \rbb^m), \qbb)$, and also the fact that the latter cohomology admits a similar description in terms of forests (see \cite{arnold69,Coh}).
\end{proof}

\begin{rmq}
There is a more natural topological way to prove~\eqref{koszul_dual_iso}. In \cite[Section 11]{aro_tur12} one can find a general statement, saying that for any pointed space $X$, the Koszul dual of $\widetilde{C}(X^{\wedge \bullet})$ is equivalent to $\widetilde{C}_*(X^{\wedge \bullet} / \Delta^{\bullet} X)$. In our case $X= \vee_{i=1}^r S^{m_i}$, because of the formality  the singular chains can be replaced by the homology:
\begin{multline*}
K\widetilde{H}_*((\vee_{i=1}^r S^{m_i})^{\wedge \bullet}, \qbb)\cong
\widetilde{H}_*\left( ( \vee_{i=1}^r S^{m_i})^{\wedge \bullet}/ \Delta^\bullet( \vee_{i=1}^r S^{m_i}),\qbb\right)\\
\cong \bigoplus_{k_1+\ldots + k_r =\bullet} 
\mathrm{Ind}^{\Sigma_k}_{\Sigma_{\overline k}} \widetilde{H}_*(\wedge_{i=1}^r (S^{m_ik_i}/\Delta^{k_1}S^{m_i}),\qbb),
\end{multline*}
where as before $\Sigma_{\overline k}=\Sigma_{k_1}\times\ldots\times \Sigma_{k_r}$. 
\end{rmq}


\subsection{Koszul complex computing the rational homology of $\emb$} \label{graph_complex_homology}\label{ss:koszul_homol}

In this subsection we describe a complex computing the rational homology of $\emb$.

Recall the following definition. 

\begin{defn} \label{otimes_hat_defn}
Let $V = \{V(n)\}_{n \geq 0}$ and $W = \{W(n)\}_{n \geq 0}$ be two symmetric sequences. Define a new symmetric sequence $V \widehat{\otimes} W$ by 
\[V \widehat{\otimes} W (n) = \bigoplus_{p+q =n} \mathrm{Ind}^{\Sigma_n}_{\Sigma_p \times \Sigma_q} V(p) \otimes W(q).\] 
\end{defn}

In practice we need to deal with multigraded vector spaces. Besides the usual homological degree, they will have the 
Hodge multi-grading $(s_1,\ldots,s_r)$ and grading by complexity $t$. As usual when we take tensor product all the degrees
get added.


\begin{prop} \label{total_complex_homology}
For $d > 2 \mathrm{max}\{m_i| \ 1 \leq i \leq r\} + 1$, there is a quasi-isomorphism 

\begin{multline} \label{total_complex_quasi-iso}
C_*(\embthm, \mathbb{Q}) \simeq \left( \underset{k \geq 0}{\bigoplus} \mathrm{hom}_{\Sigma_k} \left( \underset{1 \leq i \leq r}{\widehat{\otimes}} \overline{H}_*(C(\bullet, \rbb^{m_i}), \qbb)(k), \widehat{H}_*(C(k, \rdbb), \qbb) \right), \partial \right)\\
\simeq \left( \underset{k_1,\ldots,k_r}{\bigoplus} \mathrm{hom}_{\Sigma_{k_1}\times\ldots\times\Sigma_{k_r}}
\left(\underset{1\leq i\leq r}\otimes  \overline{H}_*(C(k_i, \rbb^{m_i}) , \widehat{H}_*(C(k_1+\ldots+k_r, \rdbb), \qbb) \right)  ,\partial\right).
\end{multline}

\end{prop}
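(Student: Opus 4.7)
The plan is to assemble together Proposition~\ref{rational_homology_prop2} with the explicit cofibrant-replacement construction from Subsection~\ref{koszul_dual_right_module}, and with the identification of the Koszul dual provided by Proposition~\ref{koszul_dual_prop}. First I would start from the quasi-isomorphism~\eqref{rational_homology_quasi-iso2}, regrouping the factors so that the target becomes the full $\widehat{H}_*(C(\bullet,\rdbb),\qbb)$:
\[
C_*(\embthm,\qbb) \simeq \bigoplus_{s_1,\ldots,s_r\geq 0} \rmod\bigl(Q^{m_1\cdots m_r}_{s_1\cdots s_r},\widehat{H}_*(C(\bullet,\rdbb),\qbb)\bigr).
\]
The splitting of the target by complexity $t$ is preserved throughout, and the switch between direct product and direct sum is justified by Remark~\ref{rmq:finite_dim}, since in every total homological degree only finitely many summands contribute.

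Next I would compute each derived mapping space using the explicit cofibrant model $CQ^{m_1\cdots m_r}_{s_1\cdots s_r}$ of~\eqref{qms_defn}. Since $CQ^{m_1\cdots m_r}_{s_1\cdots s_r}=KQ^{m_1\cdots m_r}_{s_1\cdots s_r}\circ \mathrm{Com}_+$ is freely generated as a right $\mathrm{Com}_+$\nobreakdash-module (equivalently, as a right $\Omega$\nobreakdash-module) by the symmetric sequence $KQ^{m_1\cdots m_r}_{s_1\cdots s_r}=Q^{m_1\cdots m_r}_{s_1\cdots s_r}\circ \mathrm{coLie}[1]$, the derived maps reduce to ordinary $\Sigma_\bullet$\nobreakdash-equivariant maps of symmetric sequences out of $KQ^{m_1\cdots m_r}_{s_1\cdots s_r}$, carrying the Koszul-type differential induced by the third summand of Fresse's differential on $CQ^{m_1\cdots m_r}_{s_1\cdots s_r}$ (the first two summands vanish, since $Q^{m_1\cdots m_r}_{s_1\cdots s_r}$ has zero internal differential and trivial $\mathrm{Com}_+$\nobreakdash-action).

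Then I would invoke the isomorphism~\eqref{koszul_dual_iso} of Proposition~\ref{koszul_dual_prop} together with~\eqref{htilde_hbar} to rewrite
\[
KQ^{m_1\cdots m_r}_{s_1\cdots s_r}(k)\cong \bigoplus_{k=k_1+\cdots+k_r}\mathrm{Ind}^{\Sigma_k}_{\Sigma_{\overline k}}\bigotimes_{i=1}^r \overline{H}_{(m_i-1)s_i+k_i}(C(k_i,\rbb^{m_i}),\qbb).
\]
Summing over all Hodge indices $(s_1,\ldots,s_r)$ collects, for each fixed $(k_1,\ldots,k_r)$, all homological degrees $(m_i-1)s_i+k_i$ of $\overline{H}_*(C(k_i,\rbb^{m_i}),\qbb)$ into the full graded vector space. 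This precisely reproduces the symmetric sequence $\widehat{\otimes}_{1\leq i\leq r}\overline{H}_*(C(\bullet,\rbb^{m_i}),\qbb)(k)$ of Definition~\ref{otimes_hat_defn}, yielding the first form of~\eqref{total_complex_quasi-iso}. The second form then follows from Frobenius reciprocity, which converts the $\Sigma_k$\nobreakdash-equivariant $\mathrm{hom}$ out of an induced representation into a $\Sigma_{k_1}\times\cdots\times\Sigma_{k_r}$\nobreakdash-equivariant $\mathrm{hom}$, indexed by tuples $(k_1,\ldots,k_r)$ with $k_1+\cdots+k_r=k$.

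The main obstacle I foresee is the careful bookkeeping of the differential $\partial$ on the right-hand side of~\eqref{total_complex_quasi-iso}: one must verify that, after dualising, the surviving piece of Fresse's differential on $CQ^{m_1\cdots m_r}_{s_1\cdots s_r}$ translates into the expected operadic/Koszul differential acting on tensor products of configuration space homologies, with the correct signs compatible with the Hodge grading, the complexity grading, and the (operadically suspended) $\mathrm{Lie}$\nobreakdash-coalgebra structure on $\mathrm{coLie}[1]$. Everything else is formal bookkeeping once Propositions~\ref{rational_homology_prop2} and~\ref{koszul_dual_prop} are in hand.
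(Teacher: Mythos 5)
Your proposal is correct and follows the same route as the paper: start from Proposition~\ref{rational_homology_prop2}, replace each derived mapping complex by the strict one out of the Fresse cofibrant model $CQ^{m_1\cdots m_r}_{s_1\cdots s_r}$, use that this model is free over $\mathrm{Com}_+$ on $KQ^{m_1\cdots m_r}_{s_1\cdots s_r}$, and then apply Proposition~\ref{koszul_dual_prop} together with Frobenius reciprocity to obtain both displayed forms. The paper's own proof is much more terse (deferring the regrouping, Koszul-dual identification, and induction/restriction adjunction to the $r=1$ discussion in the cited reference), so your version is essentially an expanded and correct account of the same argument.
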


The differential $\partial$ here is defined similarly to the $r=1$ case \cite[Subsection~5.2]{aro_tur13}. We won\rq{}t describe it explicitly here. However, in next Subsection~\ref{graph_complex_cohomology} we describe explicitly the dual complex computing the cohomology of the space of links.

\begin{proof} It follows  from Proposition~\ref{rational_homology_prop2} and Proposition~\ref{koszul_dual_prop}. The idea is to replace each term in the right hand side of (\ref{rational_homology_quasi-iso2}) by $\underset{\Omega}{\mbox{Rmod}}  (CQ^{m_1\cdots m_r}_{s_1 \cdots s_r}, \widehat{H}_{t(d-1)} (C(\bullet, \rdbb); \mathbb{Q}))$ (since, in Section~\ref{koszul_dual_right_module}, we have seen that $C \qms$ is a cofibrant replacement of $\qms$), and use the fact that $C \qms$ is freely generated by $K \qms$. The rest of the proof is very similar to \cite[Section 5.2]{aro_tur13}. 
\end{proof}

\subsection{Graph-complex computing the rational cohomology of $\emb$} \label{graph_complex_cohomology}

In this subsection we will explicitly describe a graph-complex, denoted $HH^{m_1,\cdots,m_r; d}$, computing the rational cohomology of $\emb$ and which is dual to~ (\ref{total_complex_quasi-iso}).  The graph-complex $HH^{m;d}$ was described in~ \cite[Section 5.3]{aro_tur13}.
In a similar way  we will describe here $HH^{m_1,\cdots,m_r; d}$ except that our complex will be colored by the set $\{1, \cdots, r\}$ of components of our links. 


By taking the dual of (\ref{total_complex_quasi-iso}), and recalling "$\widehat{\otimes}$" from Definition~\ref{otimes_hat_defn}, we have a quasi-isomorphism 
\begin{align}
& C^*(\emb, \qbb)  \\
 & \simeq \left( \underset{k \geq 0}{\bigoplus}\,\, 
 \underset{k = k_1 + \cdots + k_r} {\bigoplus} \left( \mbox{Ind}_{\Sigma_{k_1} \times \cdots \times \Sigma_{k_r}}^{\Sigma_k} \overline{H}_{-*} C(k_1, \rbb^{m_1}) \otimes \cdots \otimes \overline{H}_{-*} C(k_r, \rbb^{m_r}) \right) \otimes_{\Sigma_k} \widehat{H}^* C(k, \rdbb)\, ,\, d\, \right) \label{dual_complex0}\\
 &\simeq \left( \underset{k \geq 0}{\bigoplus}\,\, \underset{k = k_1 + \cdots + k_r} {\bigoplus} \left( \overline{H}_{-*} C(k_1, \rbb^{m_1}) \otimes \cdots \otimes \overline{H}_{-*} C(k_r, \rbb^{m_r}) \right) \otimes_{\Sigma_{k_1}\times\ldots\times\Sigma_{k_r}} \widehat{H}^* C(k, \rdbb)\, ,\, d\, \right)
  \label{dual_complex}
\end{align}
By using the Poincar\'e duality, one has 
\begin{eqnarray} \label{poincare_duality}
\overline{H}_{-*} C(k_i, \rbb^{m_i}) \cong {H}^{*+m_ik_i} C(k_i, \rbb^{m_i}) \otimes (\mbox{sign}_{k_i})^{\otimes m_i}, 1 \leq i \leq r. 
\end{eqnarray}

The equations (\ref{dual_complex}) and (\ref{poincare_duality}) tell us that the cochain complex $C^*(\emb, \qbb)$ is quasi-isomorphic to a complex whose building blocks are the cohomology groups of configuration spaces, which are well known (see for example \cite{arnold69,Coh}). Recall that $H^*(C(p, \rbb^n))$ is an   algebra generated by elements $g_{ij}, 1 \leq i \neq j \leq n$, of degree $n-1$, modulo relations 
\[g_{ij} = (-1)^n g_{ji}, \,\, g^2_{ij} =0, \ \mbox{and the Arnold relation:} \ g_{ij}g_{jk} + g_{jk}g_{ki} + g_{ki}g_{ij} = 0.\]
This cohomology algebra 	admits a nice description in terms of  spaces of forests. To every non-zero monomial, one can assign a forest putting an edge from vertex $i$ to vertex $j$ for each factor $g_{ij}$. Using this description, and using (\ref{poincare_duality}) and (\ref{dual_complex}), we have the graph-complex $HH^{m_1,\cdots,m_r; d}$ (computing $H^*(\emb, \qbb)$), which is described as follows. It is a graded vector space  spanned by some set, denoted $\mathcal{G}$, of graphs,
modulo some relations. An element in $\mathcal{G}$ is a quadruple $(G, f^v_G, f^e_G,O_G)$, where 
\begin{enumerate}
\item[$\bullet$]  $G = (V_G, E_G)$ is a graph, possibly disconnected, in which $V_G$ is a finite set of non-labeled vertices (vertices are non-labeled because the last tensor product in (\ref{dual_complex0}) is taken over $\Sigma_k$), and $E_G$ is a finite set of oriented edges; 
\item[$\bullet$]  $f^v_G \colon V_G \lra \{1, \cdots, r\}$ is a function (possibly non-surjective) called a \textit{coloration} of vertices,
and $f^e_G\colon E_G \lra  \{0,1,\cdots,r\}$ is a similar \textit{coloration} of edges . We say that elements in $(f^v_G)^{-1}(i)$ are vertices  colored by $i$. Similarly for edges, elements in  $(f^e_G)^{-1}(i)$, $1\leq i\leq r$,  are said {\it colored} (by $i$), while  
the elements in $(f^e_G)^{-1}(0)$ are said {\it non-colored} edges. The latter edges (corresponding to generators of $H^*(C(k, \rdbb), \qbb)$) can connect any two vertices, while colored edges (corresponding to generators of $H^*(C(k_i, \rbb^{m_i}), \qbb)$) connect only two vertices having the same color (see Figure~\ref{graph_colored} in which $r=2$). 
\begin{figure}[!ht]
\centering
\includegraphics[scale=0.6]{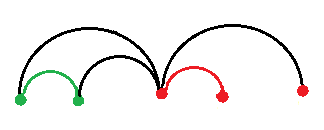}
\caption{A graph in $\mathcal{G}$} \label{graph_colored}
\end{figure}

There are two restrictions on such graphs.
If we remove all colored edges, the resulting graph is a forest in which each connected component has at least two vertices. On the other hand, if we remove all non-colored edges, the resulting graph is a forest with any type of connected components;
\item[$\bullet$]   $O_G$ is a total order of the orientation  set of $G$, where the latter one is defined in two equivalent ways. One switches from one description to another by applying the Poincar\'e duality~\eqref{poincare_duality}. The first way  naturally reminds the Bott-Taubes integration. The second way is algebrically more natural from the point of view of Koszul duality. In the first way, the orientation set is the union of the set of 
  \begin{enumerate}
   \item[-] non-colored oriented edges (considered as elements of degree $d-1$),
   \item[-] colored oriented edges (an  edge colored by $i$ is of degree $m_i-1$), and
   \item[-] vertices (a vertex colored by $i$ is of degree $-m_i$). 
  \end{enumerate}
  In the second way, it is the union of the set of 
   \begin{enumerate}
   \item[-] non-colored oriented edges (considered as elements of degree $d-1$),
   \item[-] colored non-oriented edges (of degree $-1$), and
   \item[-] connected components with respect to colored edges (such a component, whose edges are colored by $i$, is of degree $-m_i$).  
   \end{enumerate}
\end{enumerate}
Let $\mbox{Vect} [\mathcal{G}]$ denote the vector space over $\qbb$ spanned by $\mathcal{G}$, modulo the following relations. 
\begin{enumerate}
\item[-] $G=\pm G\rq{}$ if $G$ differs from $G$ only by the order of the orientation set. The sign $\pm$ is the Koszul sign of 
the corresponding permutation taking into account the degrees of the elements;
\item[-] $G = (-1)^d G'$ if $G$ differs from $G'$ only by the orientation of a non-colored edge;
\item[-] $G = (-1)^{m_i} G'$ if $G$ differs from $G'$ only by the orientation of an edge colored by $i$;
\item[-] The Arnold relations with respect to both colored and non-colored edges. 
\end{enumerate}
The third relation is valid only when we consider the first definition of an orientation set.  
 The differential, denoted $\delta$, in $\mbox{Vect} [\mathcal{G}]$  is the sum of contractions of colored edges. In the second description of orientation sets, the sign is defined by pulling on the first place of the orientation set the colored edge which is contracted and then removing it from this set. For the fist description of orientation sets, the sign is slightly more complicated. We need to pull on respectively first and second places of the orientation set the contracted colored edge and the vertex, which is its source (to recall this edge is oriented), and then remove both of them.

\begin{defn}
$$HH^{m_1,\cdots,m_r; d} = (\mbox{Vect} [\mathcal{G}],\delta).$$
\end{defn}

The following result is an immediate consequence of (\ref{dual_complex0}), (\ref{dual_complex}), and (\ref{poincare_duality}). 

\begin{thm} \label{complex_cohomology_thm} 
For $\codim$, we have an isomorphism
\[H^*(HH^{m_1,\cdots,m_r; d}) \cong H^*(\embthm, \qbb).\]
\end{thm}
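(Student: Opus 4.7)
The plan is to start from Proposition~\ref{total_complex_homology}, which provides a quasi-isomorphism expressing $C_*(\emb,\qbb)$ as a direct sum over tuples $(k_1,\ldots,k_r)$ of $\mathrm{hom}_{\Sigma_{k_1}\times\cdots\times\Sigma_{k_r}}$-spaces built from locally compact homology of configuration spaces in $\rbb^{m_i}$ and $\widehat{H}_*(C(k,\rdbb),\qbb)$. First I would dualize this to obtain the expression~(\ref{dual_complex0})--(\ref{dual_complex}) for the cochain complex $C^*(\emb,\qbb)$. The dualization simply turns a $\Sigma$-invariant hom-space into a $\Sigma$-coinvariant tensor product, which is what puts $\widehat{H}^*C(k,\rdbb)$ and the $\overline{H}_{-*}C(k_i,\rbb^{m_i})$\rq{}s on an equal footing as cohomological factors.

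Next I would apply the Poincaré duality identification~(\ref{poincare_duality}), $\overline{H}_{-*}C(k_i,\rbb^{m_i})\cong H^{*+m_ik_i}C(k_i,\rbb^{m_i})\otimes(\mathrm{sign}_{k_i})^{\otimes m_i}$, so that every building block becomes the ordinary (not locally finite) rational cohomology of some configuration space. At this point I would invoke the classical presentations of Arnold and F.~Cohen: $H^*(C(p,\rbb^n),\qbb)$ is the graded commutative algebra generated by classes $g_{ij}$ of degree $n-1$ satisfying $g_{ji}=(-1)^n g_{ij}$, $g_{ij}^2=0$, and the three-term Arnold relation. Similarly one expands $\widehat{H}^*C(k,\rdbb)$ in terms of uncolored generators $g_{ij}$ of degree $d-1$ (where the \lq\lq cross effect\rq\rq{} accounts for the forest condition that each component must carry at least two vertices). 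Assembling these combinatorial models with generators corresponding to edges and vertices yields precisely the graph description of $HH^{m_1,\cdots,m_r,d}$: uncolored edges encode factors from $\widehat H^* C(k,\rdbb)$, edges colored by $i$ encode factors from $H^*C(k_i,\rbb^{m_i})$, and vertices colored by $i$ track the partition of the total index set $k=k_1+\cdots+k_r$. The two restrictions on admissible graphs (forest condition after deleting all colored edges; forest condition after deleting all uncolored edges with no singleton components) come directly from the cross effect applied to $\widehat H^* C(\bullet,\rdbb)$ and from the degree constraints imposed by the signed generators $g_{ij}$ squaring to zero and the Arnold relations.

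I would then match the differential. On the algebraic side the differential on~(\ref{total_complex_quasi-iso}) is the Koszul differential of the cofibrant replacement $C\qms$ described in Subsection~\ref{koszul_dual_right_module}: it contracts precisely one edge of the underlying forest of colored generators, turning two singleton vertices into a joint vertex. Under Poincaré duality this contraction dualizes to contraction of a colored edge of the graph in $HH^{m_1,\cdots,m_r,d}$, which is exactly the definition of $\delta$. The two equivalent descriptions of the orientation set reflect the two natural gradings: the \lq\lq Koszul\rq\rq{} description (non-oriented colored edges of degree $-1$ and connected components of degree $-m_i$) is the one that matches the Koszul cofibrant model directly, while the \lq\lq Bott-Taubes\rq\rq{} description (oriented colored edges of degree $m_i-1$ and vertices of degree $-m_i$) is obtained from it by Poincaré duality on each colored component, which shifts the degrees exactly as required.

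The main obstacle will be the bookkeeping of signs, specifically verifying that the Koszul signs in the differential of $C\qms\otimes_{\Sigma}\widehat H^*C(\bullet,\rdbb)$ agree, after the Poincaré duality twist by $(\mathrm{sign}_{k_i})^{\otimes m_i}$, with the signs prescribed by the two orientation-set conventions on $HH^{m_1,\cdots,m_r,d}$. Once one fixes the convention of pulling the contracted edge (and its source vertex, in the Bott-Taubes convention) to the front of the orientation set before removing it, this amounts to a routine but delicate comparison entirely analogous to the $r=1$ case treated in~\cite[Section~5.3]{aro_tur13}; the coloration of vertices and edges introduces no new signs because it only refines the $\Sigma_k$-action to a $\Sigma_{k_1}\times\cdots\times\Sigma_{k_r}$-action. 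Granting this sign check, the isomorphism $H^*(HH^{m_1,\cdots,m_r,d})\cong H^*(\emb,\qbb)$ follows immediately.
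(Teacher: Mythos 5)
Your proposal is correct and follows essentially the same route as the paper: dualize Proposition~\ref{total_complex_homology} to obtain~(\ref{dual_complex0})--(\ref{dual_complex}), apply Poincar\'e duality~(\ref{poincare_duality}) to put all building blocks in cohomological form, use the Arnold--Cohen forest presentations of $H^*C(p,\rbb^n)$ and $\widehat H^*C(\bullet,\rdbb)$ to read off the graph model $HH^{m_1,\ldots,m_r,d}$, and match the Koszul differential of $C\qms$ (edge contraction in the forest of colored generators) with $\delta$. The paper indeed treats this as immediate from those ingredients; your fuller account, including the explanation of the two orientation-set conventions and the sign-checking caveat referred to the $r=1$ case in~\cite[Section~5.3]{aro_tur13}, agrees with the intended argument.
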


\begin{rmq} The disjoint union of graphs defines a product in $HH^{m_1,\cdots,m_r; d}$. Moreover, the latter complex  is a free graded commutative algebra generated by its subcomplex $HH^{m_1,\cdots,m_r; d}_\pi$ 
of connected graphs (with respect to both colored and non-colored edges). Therefore the homology
of $HH^{m_1,\cdots,m_r; d}_\pi$ is naturally isomorphic to the space dual to the rational homotopy of 
$\embthm$.
\end{rmq}

\subsection{Koszul complex computing the rational homotopy of $\emb$} \label{graph_complex_homotopy}

Similarly to Proposition~\ref{total_complex_homology}, we get:

\begin{prop} \label{total_complex_homotopy}
For $d > 2 \mathrm{max}\{m_i| \ 1 \leq i \leq r\} + 1$, there is a quasi-isomorphism 

\begin{multline} \label{total_complex_quasi-iso_pi}
\pi_*\embthm \otimes \mathbb{Q} \simeq \left( \underset{k \geq 0}{\bigoplus} \mathrm{hom}_{\Sigma_k} \left( \underset{1 \leq i \leq r}{\widehat{\otimes}} \overline{H}_*(C(\bullet, \rbb^{m_i}), \qbb)(k), \widehat{\pi}_*C(k, \rdbb)\otimes\qbb \right), \partial \right)\\
\simeq \left( \underset{k_1,\ldots,k_r}{\bigoplus} \mathrm{hom}_{\Sigma_{k_1}\times\ldots\times\Sigma_{k_r}}
\left(\underset{1\leq i\leq r}\otimes  \overline{H}_*(C(k_i, \rbb^{m_i}) , \widehat{\pi}_*C(k_1+\ldots+k_r,\rdbb)\otimes \qbb \right)  ,\partial\right).
\end{multline}
\end{prop}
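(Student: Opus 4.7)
The plan is to mirror the proof of Proposition~\ref{total_complex_homology} step by step, substituting the target right $\Omega$-module $\widehat{H}_*(C(\bullet, \rdbb), \qbb)$ by $\qbb \otimes \widehat{\pi}_*C(\bullet, \rdbb)$ throughout. The starting point is equation~(\ref{rational_homotopy_quasi-iso2}) of Proposition~\ref{rational_homology_prop2}, which already expresses $\pi_*\embthm \otimes \qbb$ as a direct sum over $s_1,\ldots,s_r,t \geq 0$ of derived mapping spaces from $\qms$ into the complexity-$t$ summand of $\qbb \otimes \widehat{\pi}_*C(\bullet, \rdbb)$.

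First I will replace each derived mapping space $\rmodthm(\qms, -)$ by the ordinary mapping space out of the cofibrant replacement $C\qms$ constructed in Subsection~\ref{koszul_dual_right_module}. Since $C\qms = \qms \circ \mathrm{coLie}[1] \circ \mathrm{Com}_+$ is freely generated, as a right $\mathrm{Com}_+$-module (equivalently, as a right $\Omega$-module), by the Koszul dual symmetric sequence $K\qms = \qms \circ \mathrm{coLie}[1]$, the mapping space simplifies to the $\Sigma$-equivariant hom $\mathrm{hom}_{\Sigma}(K\qms, -)$ from the underlying symmetric sequence. I will then invoke Proposition~\ref{koszul_dual_prop} to identify $K\qms(k)$ with $\bigoplus_{k = k_1+\cdots+k_r} \mathrm{Ind}^{\Sigma_k}_{\Sigma_{\bar k}} \bigotimes_{i=1}^r \overline{H}_{(m_i-1)s_i + k_i}(C(k_i,\rbb^{m_i}),\qbb)$, where $\Sigma_{\bar k} = \Sigma_{k_1}\times\cdots\times\Sigma_{k_r}$.

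Finally, summing over $s_1,\ldots,s_r$ absorbs all homological gradings of each $\overline{H}_*(C(k_i,\rbb^{m_i}),\qbb)$ into the definition of $\widehat{\otimes}_{1\leq i\leq r} \overline{H}_*(C(\bullet,\rbb^{m_i}),\qbb)(k)$, while summing over $t$ collects the complexity summands of $\qbb\otimes\widehat{\pi}_*C(k,\rdbb)$. Passing from the induced $\Sigma_k$-representations to $\Sigma_{\bar k}$-equivariant hom via the standard induction-restriction adjunction produces the second, unpacked form of~(\ref{total_complex_quasi-iso_pi}). The induced differential $\partial$ is inherited from the Koszul differential on $C\qms$; combinatorially it acts as a sum of edge contractions on the colored forests described in Subsection~\ref{koszul_dual_right_module}, transported through the identification supplied by Proposition~\ref{koszul_dual_prop}.

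The argument is essentially bookkeeping once Propositions~\ref{rational_homology_prop2} and~\ref{koszul_dual_prop} are in hand, so no new conceptual obstacle appears. The step requiring the most care will be the verification that the direct sum and the product coincide in each homological degree, ensuring that the expression obtained from~(\ref{rational_homotopy_quasi-iso2}) matches the direct-sum formula of the proposition; as in Remark~\ref{rmq:finite_dim}, this rests on the standard finite-dimensionality estimates available under the codimension hypothesis $d > 2\max\{m_i\}+1$, which guarantee that only finitely many $(s_1,\ldots,s_r,t)$ contribute to any given degree.
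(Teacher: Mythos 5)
Your proposal matches the paper's proof: the paper simply states that the argument is ``analogous to the proof of Proposition~\ref{total_complex_homology}'', which in turn replaces each derived mapping space in~\eqref{rational_homotopy_quasi-iso2} by the ordinary mapping space out of the cofibrant replacement $C\qms$, uses that $C\qms$ is freely generated over $\mathrm{Com}_+$ by $K\qms$, and then invokes Proposition~\ref{koszul_dual_prop} together with induction--restriction adjunction, exactly as you describe. Your remark on the interchange of products and direct sums via the finiteness estimates of Remark~\ref{rmq:finite_dim} is also the same point made in Proposition~\ref{rational_homology_prop2}.
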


The differential $\partial$ here is defined similarly to the case of the homology complex. For  the $r=1$ case, 
see~\cite[Subsection~5.2]{aro_tur13}. 

\begin{proof}
Analogous to the proof of Proposition~\ref{total_complex_homology}.
\end{proof}

\subsection{Truncations of the Koszul complexes and the Goodwillie-Weiss tower}\label{ss:truncation_koszul}

Contrary to the hairy graph-complexes, the Koszul complexes are well suited to determine rationally the Goodwillie-Weiss
approximations for $\emb$.

\begin{thm}\label{th:trunc}
For $\codimlow$, one has
\begin{multline}\label{eq:trunc_homology}
T_nC_*(\embthm, \mathbb{Q}) \simeq \left( \underset{0\leq k \leq n}{\bigoplus} \mathrm{hom}_{\Sigma_k} \left( \underset{1 \leq i \leq r}{\widehat{\otimes}} \overline{H}_*(C(\bullet, \rbb^{m_i}), \qbb)(k), \widehat{H}_*(C(k, \rdbb), \qbb) \right), \partial \right)\\
\simeq \left( \underset{{k_1,\ldots,k_r}\atop{k_1+\ldots +k_r\leq n}}{\bigoplus} \mathrm{hom}_{\Sigma_{k_1}\times\ldots\times\Sigma_{k_r}}
\left(\underset{1\leq i\leq r}\otimes  \overline{H}_*(C(k_i, \rbb^{m_i}) , \widehat{H}_*(C(k_1+\ldots+k_r, \rdbb), \qbb) \right)  ,\partial\right).
\end{multline}

\end{thm}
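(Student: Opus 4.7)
The plan is to adapt the proof of Proposition~\ref{total_complex_homology} by replacing the full derived mapping space with its arity-$\leq n$ truncation. First, I would establish an analog of Proposition~\ref{rational_homology_prop2} for finite stages of the Goodwillie-Weiss tower. In the framework of~\cite{aro_tur12}, the $n$-th stage $T_n C_*(\overline{\mathrm{Emb}}_c(N,\rdbb),\qbb)$ is a homotopy limit over configurations of at most $n$ points; translated into the $\Omega$-module language of Section~\ref{rational_homology_string_link_section} (via the Pirashvili cross effect), this amounts to restricting the derived mapping space to the full subcategory $\Omega_{\leq n}\subset\Omega$ of sets of cardinality $\leq n$. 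Combining this with Propositions~\ref{crucial_prop} and~\ref{p:equiv_towers} and the relative formality of little discs operads (valid for $d>n+1$, hence certainly under $\codimlow$, by~\cite{tur_wilw}), I would obtain
\begin{equation*}
T_nC_*(\emb,\qbb)\simeq \underset{\Omega_{\leq n}}{\mathrm{hRmod}}\bigl(\widetilde{H}_*((\vee_{i=1}^rS^{m_i})^{\wedge\bullet},\qbb),\widehat{H}_*(C(\bullet,\rdbb),\qbb)\bigr),
\end{equation*}
where the subscript indicates that both $\Omega$-modules are restricted to arities $\leq n$.

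Next, I would invoke the Hodge splitting~\eqref{splitting_htilde}: since $Q^{m_1\cdots m_r}_{s_1\cdots s_r}$ is concentrated in arity $s_1+\cdots+s_r$, its contribution to the truncated mapping space vanishes whenever $s_1+\cdots+s_r>n$, so the sum reduces to tuples with $s_1+\cdots+s_r\leq n$. The third step is to substitute the cofibrant replacement $CQ^{m_1\cdots m_r}_{s_1\cdots s_r}$ from Subsection~\ref{koszul_dual_right_module}. Since $CQ^{m_1\cdots m_r}_{s_1\cdots s_r}$ is freely generated as a right $\mathrm{Com}_+$-module by its Koszul dual $KQ^{m_1\cdots m_r}_{s_1\cdots s_r}$, the space of $\Omega_{\leq n}$-module maps out of it picks out precisely those components of $KQ^{m_1\cdots m_r}_{s_1\cdots s_r}$ of arity $\leq n$. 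Invoking Proposition~\ref{koszul_dual_prop} to identify these generators with locally compact homology of configuration spaces and then summing over the Hodge indices recovers the right-hand side of~\eqref{eq:trunc_homology}, the constraint $k_1+\cdots+k_r\leq n$ arising precisely from the truncation.

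The main obstacle is the first step: establishing that the $n$-th Goodwillie-Weiss stage for singular chains of $\emb$ matches the arity-$\leq n$ truncation of the derived mapping space of right $\Omega$-modules. Although this compatibility is morally present in~\cite{aro_tur12} (where it underpins the convergence used in the proof of Theorem~\ref{rational_homology_thm2}), one must check carefully that Propositions~\ref{crucial_prop} and~\ref{p:equiv_towers}, the Pirashvili cross-effect equivalence, and the formality zigzags of Section~\ref{rational_homology_string_link_section} are all compatible with the arity filtration on both the source and the target $\Omega$-modules. Note in particular that convergence of the tower to the embedding space itself is \emph{not} required here, which is why $\codimlow$ suffices instead of the stronger $d>2\max m_i+1$; only the finite-dimensional arguments at each fixed stage are needed, so the relative formality of little discs operads available under this weaker hypothesis is enough.
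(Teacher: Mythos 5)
Your approach is essentially the paper's: establish the truncated analogue of~\eqref{codim_condition2}, pass through the Pirashvili equivalence and formality, Hodge-split, and then read off the truncated Koszul complex from the cofibrant replacement $CQ^{m_1\cdots m_r}_{s_1\cdots s_r}$. The one step you state informally (that taking $\Omega_{\leq n}$-module maps out of $CQ$ "picks out" the arity $\leq n$ part of $KQ$) is the one the paper handles more carefully: rather than arguing directly that the restriction $CQ|_{\Omega_n}$ remains a valid cofibrant model in the truncated category, the paper trades the $\underset{\Omega_n}{\mathrm{hRmod}}$ mapping space for an $\underset{\Omega}{\mathrm{hRmod}}$ mapping space into the arity-truncated target $tr_n\widehat{H}_{t(d-1)}(C(\bullet,\rdbb),\qbb)$, invoking \cite[Proposition~4.8]{tur_wilw2}, and only then inserts $CQ$ in the full $\Omega$-module category. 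This is the same computation packaged differently, and it also makes transparent (as the paper's footnote notes) why the Koszul/projective side cooperates with truncation while the injective/hairy-graph side does not: $tr_n$ fails to preserve injectivity. So your route is correct, but you should either verify that $CQ|_{\Omega_n}$ is still cofibrant in $\underset{\Omega_n}{\mathrm{Rmod}}$ (true, by inspecting its filtration by free modules) or follow the paper in reducing to the truncated-target formulation.
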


Note that the range is improved compared to Proposition~\ref{total_complex_homology},  because
we do not need the convergence $T_\infty C_*(\embthm, \mathbb{Q})\simeq C_*(\embthm, \mathbb{Q})$
(which does require  $\codim$). 

Since the homotopy limit does not commute with singular chains functor, in general
\[
C_*(T_n\emb,\qbb)\not\simeq T_n C_*(\emb,\qbb).
\]
Thus the methods of this paper are not applicable to understand rationally $T_n\emb$ even in the range $\codim$. 
However, we believe in the following:

\begin{conj}\label{con:trunc}
For $\codimlow$, one has
\begin{multline} \label{eq:trunc_homotopy}
\pi_*T_n\embthm \otimes \mathbb{Q} \simeq \left( \underset{0\leq k \leq n}{\bigoplus} \mathrm{hom}_{\Sigma_k} \left( \underset{1 \leq i \leq r}{\widehat{\otimes}} \overline{H}_*(C(\bullet, \rbb^{m_i}), \qbb)(k), \widehat{\pi}_*C(k, \rdbb)\otimes\qbb \right), \partial \right)\\
\simeq \left( \underset{{k_1,\ldots,k_r}\atop{k_1+\ldots +k_r\leq n}}{\bigoplus} \mathrm{hom}_{\Sigma_{k_1}\times\ldots\times\Sigma_{k_r}}
\left(\underset{1\leq i\leq r}\otimes  \overline{H}_*(C(k_i, \rbb^{m_i}),\qbb) , \widehat{\pi}_*C(k_1+\ldots+k_r,\rdbb)\otimes \qbb \right)  ,\partial\right).
\end{multline}
\end{conj}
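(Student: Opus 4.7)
The plan is to prove Conjecture~\ref{con:trunc} by extending the proof of Theorem~\ref{rational_homotopy_thm} to each finite stage of the Goodwillie-Weiss tower, much as Theorem~\ref{th:trunc} extends Proposition~\ref{total_complex_homology} to each finite stage. The key observation is that, under the Koszul duality in Section~\ref{koszul_dual_right_module} between $\mathrm{Com}_+$ and $\mathrm{Lie}$, restricting the source $\Omega$-module $\widetilde{H}_*((\vee_{i=1}^r S^{m_i})^{\wedge\bullet},\qbb)$ to arities $\leq n$ corresponds precisely to the truncation $k_1+\ldots+k_r\leq n$ on the right-hand side of~\eqref{eq:trunc_homotopy}. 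So the target is to establish an $n$-truncated analog of Theorem~\ref{rational_homotopy_thm}, namely
\begin{equation*}
\pi_*T_n\emb\otimes\qbb \simeq \rmodthm\bigl(\widetilde{H}_*((\vee_{i=1}^r S^{m_i})^{\wedge \bullet},\qbb)^{\leq n},\, \qbb\otimes\widehat{\pi}_*C(\bullet,\rdbb)\bigr),
\end{equation*}
after which taking the Koszul resolution of the truncated source, exactly as in Subsection~\ref{graph_complex_homotopy}, delivers the right-hand side of~\eqref{eq:trunc_homotopy}.

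First, I would establish an $n$-truncated delooping identifying $T_n\emb$ rationally with an iterated loop space of a derived mapping space built from the little discs operads, paralleling the Boavida-Weiss delooping used in~\cite{fr_tur_wilw} for $r=1$. Second, I would invoke relative formality of the little discs operads from~\cite{tur_wilw}, which holds throughout the range $\codimlow$, to pass to the Koszul model and identify the operadic derived mapping space with the $\Omega$-module derived mapping space displayed above. Third, applying the Koszul construction of Section~\ref{koszul_dual_right_module} to the arity-truncated source, and using that this truncation is compatible with the Hodge direct sum decomposition into $Q^{m_1\ldots m_r}_{s_1\ldots s_r}$ summands with $s_1+\ldots+s_r\leq n$, yields the conjectured truncated Koszul complex.

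The main obstacle is the first step, namely the multi-component delooping for $r\geq 2$: as discussed before Theorem~\ref{t:low}, the attempt by Dwyer-Hess for classical string links has a gap. A possible way forward is to construct the delooping stage by stage, exploiting that $T_n\emb$ is built as a homotopy limit over a finite diagram of handle attachments and subcubical diagrams (as in the proof of Proposition~\ref{p:equiv_towers}), which should be more tractable than delooping the full $\emb$. Alternatively, one could try to bypass delooping by extracting $\qbb\otimes\pi_*T_n\emb$ from the chain-level statement in Theorem~\ref{th:trunc} via a primitive-elements argument, mirroring the way $\empi$ is extracted from $\emh$ as the subcomplex of connected graphs in Corollary~\ref{empi_emb_coro}; this would require in particular a rational $H$-space structure on each $T_n\emb$ compatible with concatenation on $\emb$, which is plausible since concatenation of links is defined by a local operation and should descend to each finite stage of the tower.
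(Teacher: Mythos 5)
The statement you are addressing is Conjecture~\ref{con:trunc}, which the paper does \emph{not} prove; it is stated as an open conjecture (the paper notes only that the $r=1$ case follows from \cite{fr_tur_wilw}). So there is no proof in the paper to compare against, and a complete proof attempt should not be expected to match anything. Your proposal is a reasonable sketch of a strategy, and you correctly identify the central obstacle: a multi-component ($r\geq 2$) delooping of $\emb$, and more specifically a stage-by-stage delooping of $T_n\emb$, in terms of an operadic mapping space, paralleling the Boavida--Weiss result for $r=1$. This matches the difficulties the paper itself flags before Theorem~\ref{t:low}.

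However, your alternative route has a genuine gap that the paper explicitly warns about. You propose to bypass delooping by taking primitives in Theorem~\ref{th:trunc}, using an $H$-space structure on $T_n\emb$. The problem is that Theorem~\ref{th:trunc} computes $T_nC_*(\emb,\qbb)$, not $C_*(T_n\emb,\qbb)$, and as the paper notes at the start of Subsection~\ref{ss:truncation_koszul}, the homotopy limit defining $T_n$ does not commute with the singular chains functor, so in general $C_*(T_n\emb,\qbb)\not\simeq T_nC_*(\emb,\qbb)$. Even granting a compatible $H$-space structure on each $T_n\emb$, taking primitives in $T_nC_*(\emb,\qbb)$ would only tell you something about that chain-level tower and not about $\pi_*T_n\emb$. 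This is exactly why the paper leaves~\eqref{eq:trunc_homotopy} as a conjecture rather than a corollary of Theorem~\ref{th:trunc}. Your first route (a truncated delooping plus relative formality plus the Koszul resolution) is the correct shape of an argument, but without the delooping input it remains a conjecture, as in the paper.
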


This conjecture can be viewed as a refinement of Conjecture~\ref{con:low} and for $r=1$ has also been shown
 in~\cite[Corollary~10.13]{fr_tur_wilw}.

\begin{proof}[Proof of Theorem~\ref{th:trunc}]
Let $\Omega_n$, respectively $\Gamma_n$, denote the full subcategory of $\Omega$, respectively $\Gamma$, consisting of sets,
respectively pointed sets, of cardinal $\leq n$, respectively $\leq n+1$.  The Pirashvili cross effect functor 
restricts to an equivalence of categories also for truncated modules:
\[
\mbox{cr} \colon \underset{\Gamma_n}{\mbox{Rmod}} \lra \underset{\Omega_n}{\mbox{Rmod}}.
\]

Similarly to~\eqref{codim_condition2}, which holds for $\codimlow$, one gets
 \begin{eqnarray} \label{eq:trunc1}
  T_{n} C_*(\emb, \qbb)  \simeq 
  \underset{\Gamma_n}{\mathrm{hRmod}} (C_*((\vee_{i=1}^r S^{m_i})^{\times\bullet}), H_*(C(\bullet, \rdbb); \qbb) ). 
  \end{eqnarray}
Now using the fact that truncation commutes with the cross effect and also the formality of the $\Omega$-module
$\widetilde{C}_*((\vee_{i=1}^r S^{m_i})^{\wedge \bullet}, \qbb)$, we get
\begin{multline}\label{eq:trunc2}
T_nC_*(\embthm, \qbb) \cong \underset{\Omega_n}{\mathrm{hRmod}}\left(\widetilde{H}_*((\vee_{i=1}^r S^{m_i})^{\wedge \bullet}, \qbb), \widehat{H}_*(C(\bullet, \rdbb), \qbb)\right)\\
\simeq \underset{s_1, \cdots, s_r, t}{\prod} \underset{\Omega_n}{\mbox{hRmod}} \left(Q^{m_1 \cdots m_r}_{s_1 \cdots s_r},  \widehat{H}_{t(d-1)}(C(\bullet, \rdbb), \qbb) \right)
\simeq  \underset{s_1, \cdots, s_r, t}{\prod} \underset{\Omega}{\mbox{hRmod}} \left(Q^{m_1 \cdots m_r}_{s_1 \cdots s_r},  tr_n\widehat{H}_{t(d-1)}(C(\bullet, \rdbb), \qbb) \right),
\end{multline}
where the functor $tr_n\colon \underset{\Omega}{\mbox{Rmod}}\to \underset{\Omega}{\mbox{Rmod}}$ is defined by
\[
tr_nM(k)=
\begin{cases}
M(k),&k\leq n;\\
0,& k>n.
\end{cases}
\]
The last equality in~\eqref{eq:trunc2} follows from~\cite[Proposition~4.8]{tur_wilw2}.\footnote{This also explains
why the hairy graph-complexes can not be used to study the Goodwillie-Weiss approximations: the functor $tr_n$ does not preserve injectivity.}

Now replacing $Q^{m_1 \cdots m_r}_{s_1 \cdots s_r}$ by $CQ^{m_1 \cdots m_r}_{s_1 \cdots s_r}$, we notice that only finitely many factors in~\eqref{eq:trunc2} are non-zero, and the obtained complex is exactly the truncated~\eqref{total_complex_quasi-iso}.
\end{proof}

\begin{rmq}\label{r:multivar}
The space $\emb$ can also be studied by the multivariable manifold calculus~\cite{mun_vol12}
similarly to the case of classical (all $m_i=1$) string links considered in~\cite{mun_vol14}. We expect that analogously to Theorem~\ref{th:trunc}, one can show
that for $\codimlow$,
\begin{multline}\label{eq:trunc_homology_multi}
T_{n_1\ldots n_r}C_*(\embthm, \mathbb{Q})
\simeq \\
\left( \underset{k_i\leq n_i,\, i=1\ldots r}{\bigoplus} \mathrm{hom}_{\Sigma_{k_1}\times\ldots\times\Sigma_{k_r}}
\left(\underset{1\leq i\leq r}\otimes  \overline{H}_*(C(k_i, \rbb^{m_i}) , \widehat{H}_*(C(k_1+\ldots+k_r, \rdbb), \qbb) \right)  ,\partial\right).
\end{multline}
We also conjecture that in the same range,
\begin{multline} \label{eq:trunc_homotopy_multi}
\pi_*T_{n_1\ldots n_r}\embthm \otimes \mathbb{Q} \simeq \\
\simeq \left( \underset{k_i\leq n_i,\, i=1\ldots r}{\bigoplus} \mathrm{hom}_{\Sigma_{k_1}\times\ldots\times\Sigma_{k_r}}
\left(\underset{1\leq i\leq r}\otimes  \overline{H}_*(C(k_i, \rbb^{m_i}) , \widehat{\pi}_*C(k_1+\ldots+k_r,\rdbb)\otimes \qbb \right)  ,\partial\right).
\end{multline}
\end{rmq}

\addcontentsline{toc}{section}{References}

%

\textsf{Department of Mathematics and Statistics, University of Regina}\\
 3737 Wascana Pkwy, Regina, SK S4S 0A2, Canada\\
\textit{E-mail address: pso748@uregina.ca}

\textsf{Department of Mathematics, Kansas State University\\
        138 Cardwell Hall, Manhattan, KS 66506, USA \\}
\textit{E-mail address: turchin@ksu.edu}

\end{document}